\newtheorem{theorem}{Theorem}
\newtheorem{lemma}{Lemma}
\newtheorem{definition}{Definition}
\newtheorem{proposition}{Proposition}
\newtheorem{corollary}{Corollary}
\begin{document}
{\selectlanguage{english}
\binoppenalty = 10000 %
\relpenalty   = 10000 %

\pagestyle{headings} \makeatletter

\renewcommand{\@oddhead}{\raisebox{0pt}[\headheight][0pt]{\vbox{\hbox to\textwidth{{Lindstr\"{o}m theorem for bi-intuitionistic propositional logic}\hfill \strut\thepage}\hrule}}}
\makeatother

\title{Maximality of bi-intuitionistic propositional logic}

\author{Grigory Olkhovikov}

\address{ Department of Philosophy I\\
Ruhr University Bochum\\
Bochum, Germany }

\address{ Department of Philosophy \\
Ural Federal University\\
Ekaterinburg, Russia }

\email{grigory.olkhovikov@rub.de, grigory.olkhovikov@gmail.com}

\author{Guillermo Badia}

\address{ Institute of Philosophy and Scientific Method \\
	Johannes Kepler University\\
	Linz, Austria
}

\address{
	School of Historical and Philosophical Inquiry\\
	University of Queensland\\
	Brisbane, Australia
}
\email{guillebadia89@gmail.com}
\date{}
\maketitle

\begin{abstract}

In the style of Lindstr\"om's theorem for classical first-order logic, this article characterizes propositional bi-intuitionistic  logic as the
maximal (with respect to expressive power) abstract logic
satisfying a certain form of compactness, the Tarski union
property and preservation under bi-asimulations. 
Since  bi-intuitionistic logic introduces new complexities in the intuitionistic setting by adding the analogue of a backwards looking modality, the present paper constitutes a non-trivial modification of previous work  done by the authors for intuitionistic logic \cite{baok}. 
\bigskip

\emph{Keywords:} Lindstr\"{o}m theorem, bi-intuitionistic logic, abstract model theory, bi-asimulations.

\bigskip

\emph{Math subject classification:}     03C95,          03B55.
\end{abstract}
\section{Introduction}

In a series of papers from the 1970s, Cecylia Rauszer \cite{rau1, rau2, rau3} studied an extension of intuitionistic logic obtained by adding the algebraic dual of the intuitionistic implication $\rightarrow$ to the language. This co-implication connective, which we denote in this paper by ``$\ll $'',  is sometimes also called \emph{subtraction} \cite{re}. In the Kripke semantics for bi-intuitionistic logic, it behaves similarly to a backwards looking diamond modality $\Diamond^{-1}$. The resulting logic is known as \emph{Heyting-Brouwer} or \emph{Bi-intuitionistic logic}. In recent years, the study of this very natural logic has received some degree of attention from different scholars \cite{ba1, ba2,  go,  gr, ko, o3,  pi, sk, tr}. Our own work in this field has focused on the semantic study of bi-intuitionistic logic. The present contribution continues to develop this line of research, in particular, by studying a Lindstr\"om theorem for this logic.

Per Lindstr\"om famously provided a battery of results \cite{lin, lin2, lin3, lin4} which characterized classical first-order logic as the maximal ``abstract logic" (or ``model-theoretic language" \cite{fe}) satisfying certain combinations of properties from the following list: Compactness, the L\"owenheim-Skolem property, the Robinson consistency property, the Karp property, the Tarski Union property, the abstract Completeness property and the $\lambda$-omitting types property. This opened up the possibility of studying logics (involving cardinality quantifiers, infinitary disjunctions and conjunctions, second order quantification, etc.) in accord with what their model theory could offer and situating them in a much broader abstract model-theoretic framework. The textbook references for this subject  (called ``abstract model theory") are \cite{bar1, barfer}.  

Lindstr\"om theorems have also been studied in the context of non-classical logics \cite{mm}, with modal logic providing perhaps the most fruitful case \cite{otto, vanB, rijke, enqvist}. In the context of propositional intuitionistic logic, the authors of the present paper obtained such a theorem in \cite{baok} using the Tarski union property, a form of compactness and preservation under asimulations (inspired by work done for modal logic in \cite{enqvist}). Since bi-intuitionistic logic is such a natural companion to intuitionistic logic, the question immediately arose whether a similar theorem would hold for bi-intuitionistic logic replacing asimulations (introduced in \cite{o}) by an appropriate bi-intuitionistic analogue (such as that in \cite{baok}). 

In this article we will  adapt the somewhat delicate arguments from \cite{baok} to a bi-intuitionistic context. One of the most substantial difficulties of this task is the backwards looking modality, making the unraveling construction, which plays a central role in our proof, more complex. Hence, to use it the way we need it requires new special care.\footnote{For instance, the theory encoding all the types of a given model, which is often a pivotal piece in the proofs of this kind, can no longer be given by a finite number of formula schemes; we had to use an inductive definition instead.} Finally, the paper is self-contained and it will not require any familiarity with \cite{baok} on behalf of the reader (which, of course, means that we will repeat even the parts from \cite{baok}  which are rather easily adaptable, and not just the non-trivial parts).

The article is arranged as follows: in \S \ref{S:Prel} we establish the main bits of our
notation and define some basic concepts. \S \ref{S:unravel}
is then devoted to proving some less immediate lemmas on
bi-intuitionistic unravellings and bi-intuitionistically saturated
models. In \S \ref{S:Abstract} we define the notion of an
abstract logic in the context of extensions of bi-intuitionistic logic and formulate our main result. In \S \ref{main} we provide the proof of said result.  We end the paper by summarizing our work and 
 suggesting some lines of further inquiry.

\section{Preliminaries}\label{S:Prel}

We start this section by fixing some notational conventions to be used in this paper. For any $n > 0$, we will denote by $\bar{o}_n$ the sequence
$(o_1,\ldots, o_n)$ of objects of any kind; moreover, somewhat
abusing the notation, we will sometimes denote $\{o_1,\ldots,
o_n\}$ by $\{\bar{o}_n\}$. We will denote by
$(\bar{o}_n)^{\frown}(\bar{r}_m)$ the
concatenation of $\bar{o}_n$ and $\bar{r}_m$. 
The ordered $1$-tuple will
be identified with its only member, the ordered $0$-tuple will be denoted by $\Lambda$, and the last element of a sequence $\alpha$ will be denoted by $end(\alpha)$ so that we will have, for example, that $end(\bar{o}_n)= o_n$.

If $f$ is any function, then we will denote by $dom(f)$ its domain
and by $rang(f)$ the image of $dom(f)$ under $f$; if $rang(f)
\subseteq M$, we will also write $f: dom(f) \to M$.
Finally, if $X, Y$ are sets,
then we will write $X \Subset Y$, if $X \subseteq Y$ and $X$ is
finite.

In this paper, we consider the language of bi-intuitionistic
propositional logic, which we identify with its set of formulas.
This language is generated from some set of propositional letters
by a finite number of applications of connectives from the set $\{
\bot, \wedge, \vee, \to, \ll \}$; in this set, $\ll$ stands for bi-intuitionistic co-implication and the connectives are assumed to
have their usual arities. The set of propositional letters can be
in general arbitrarily large, but we assume that it is disjoint
from the above set of connectives (and from the set of logical
symbols of every logic which we are going to consider below). Any
set with this property we will call \emph{signature}.
Bi-intuitionistic propositional formulas will be denoted with Greek
letters like $\varphi$, $\psi$ and $\theta$,\footnote{We will be
adjoining to them subscripts and superscripts when needed, and the
same is assumed for any other notations introduced in this paper.}
whereas the elements of signatures will be denoted by letters
$p$, $q$, and $r$. If $\Theta$ is a signature, then $BIL(\Theta)$
denotes the set of those bi-intuitionistic propositional formulas
which only contain propositional letters from $\Theta$.

For this language, we assume the standard Kripke semantics.
The typical notations for bi-intuitionistic Kripke models that we are
going to use below are as follows:
$$
\mathcal{M} = \langle W, \prec, V\rangle, \mathcal{M}' = \langle W',
\prec', V'\rangle, \mathcal{M}_n = \langle W_n, \prec_n, V_n\rangle,
$$
$$
\mathcal{N} = \langle U, \lhd, Y\rangle, \mathcal{N}' = \langle U',
\lhd', Y'\rangle,\mathcal{N}_n = \langle U_n, \lhd_n, Y_n\rangle,
$$
where $n \in \omega$. As we proceed, we will also define some
operations on bi-intuitionistic Kripke models which are going to
affect the notation.

If $\Theta$ is a signature and $\mathcal{M}$ is a bi-intuitionistic
Kripke $\Theta$-model,\footnote{Bi-intuitionistic Kripke models, as they are defined in this paper, are the same as Kripke models used for intuitionistic propositional logic. Therefore, one can also call them intuitionistic Kripke models. The reason for our preferred terminology is that in this paper we only consider these models in connection with bi-intuitionistic logic and its extensions.} then $W$ is a non-empty set of
\emph{worlds}, \emph{states}, or \emph{nodes}, $\prec$ is a partial order
on $W$ called $\mathcal{M}$'s \emph{accessibility relation}, and
$V$ is the evaluation function for $\Theta$ in $\mathcal{M}$, that
is to say, a function $V:\Theta \to 2^W$ such that for
every $p \in \Theta$ and arbitrary $s, t \in W$, it is true that:
$$
w\mathrel{\prec}v \Rightarrow (w \in V(p) \Rightarrow v \in V(p)).
$$
As  usual, we denote the reduct of a $\Theta$-model
$\mathcal{M}$ to a smaller signature $\Sigma \subseteq\Theta$ by
$\mathcal{M}\upharpoonright\Sigma$.
Next, we need a definition of isomorphism between models:
\begin{definition}\label{D:isomorphism}
	{\em Let $\mathcal{M}$, $\mathcal{N}$ be $\Theta$-models. A  function $g: W \to U$ is called an \emph{ isomorphism from $\mathcal{M}$ onto $\mathcal{N}$} (write $g:\mathcal{M} \cong \mathcal{N}$)
		iff $g$ is a bijection, and for all $v,u \in W$ and $p \in \Theta$ it is true that:
		\begin{align}
		&v\mathrel{\prec}u \Leftrightarrow g(v)\mathrel{\lhd}g(u)
		\label{E:ic1}\tag{\text{i-rel}}\\
		&v \in V(p) \Leftrightarrow g(v) \in Y(p)\tag{\text{i-atom}}
		\end{align}
	}
\end{definition}

If $\mathcal{M}$ and $\mathcal{N}$ are two bi-intuitionistic Kripke
$\Theta$-models then we say that $\mathcal{M}$ is a
\emph{submodel} of $\mathcal{N}$ and write $\mathcal{M} \subseteq
\mathcal{N}$ iff $W \subseteq U$, $R = S\upharpoonright(W\times
W)$ and, for every $p \in \Theta$, $V(p) = Y(p) \cap W$. In
general, for every $W \subseteq U$ there exists a corresponding
submodel $\mathcal{M}$ of $\mathcal{N}$ with $W$ as its universe. Moreover, such an  $\mathcal{M}$ is unique up to an isomorphism. In such cases we may also denote $\mathcal{M}$ by
$\mathcal{N}(W)$.

If $\mathcal{M}_1 \subseteq,\ldots, \subseteq \mathcal{M}_n
\subseteq,\ldots$ is a countable chain of bi-intuitionistic Kripke
models then the model:
$$
\bigcup_{n \in \omega}\mathcal{M}_n = (\bigcup_{n \in \omega}W_n,
\bigcup_{n \in \omega}\prec_n, \bigcup_{n \in \omega}V_n)
$$
is again a bi-intuitionistic Kripke model.

A \emph{pointed} bi-intuitionistic Kripke $\Theta$-model is a pair of
the form $(\mathcal{M}, w)$ such that $w \in W$. The class of all pointed bi-intuitionistic Kripke
models in all signatures (resp. in the signature $\Theta$) will be
denoted by $Pmod$ (resp. by $Pmod_\Theta$).

In this paper, we
are not going to consider any non-bi-intuitionistic Kripke models.
Therefore, we will omit the qualification `bi-intuitionistic Kripke'
in what follows and will simply speak about (pointed)
$\Theta$-models. We assume the standard satisfaction relation for
$BIL(\Theta)$:
\begin{align*}
&\mathcal{M}, w \models_{BIL} p \Leftrightarrow w \in V(p),
&&p \in \Theta;\\
&\mathcal{M}, w \models_{BIL} \varphi \wedge \psi \Leftrightarrow
\mathcal{M}, w \models_{BIL} \varphi\textup{ and } \mathcal{M}, w
\models_{BIL} \psi;\\
&\mathcal{M}, w \models_{BIL} \varphi \vee \psi \Leftrightarrow
\mathcal{M}, w \models_{BIL} \varphi\textup{ or } \mathcal{M}, w
\models_{BIL} \psi;\\
&\mathcal{M}, w \models_{BIL} \varphi \to \psi \Leftrightarrow
(\forall v\mathrel{\succ}w )(\mathcal{M}, v
\not\models_{BIL} \varphi\textup{ or } \mathcal{M}, v
\models_{BIL} \psi);\\
&\mathcal{M}, w \models_{BIL} \varphi \ll \psi \Leftrightarrow
(\exists v\mathrel{\prec}w)(\mathcal{M}, v\models_{BIL} \varphi\textup{ and }\mathcal{M}, v
\not\models_{BIL} \psi);\\
&\mathcal{M}, w \not\models_{BIL} \bot.
\end{align*}
Note that the formulas of $BIL(\Theta)$ get satisfied at pointed
$\Theta$-models rather than at models alone.

Since classical negation is not available in bi-intuitionistic logic,
it makes sense to define bi-intuitionistic theories, in the same way as intuitionistic theories are often defined, that is to say as pairs of sets of formulas including truth assumptions and falsehood
assumptions of a given theory (see e.g.
\cite[p. 110]{GabbayMaksimova} w.r.t. intuitionistic first-order
logic). Thus a $BIL(\Theta)$-theory becomes a pair $(\Gamma,
\Delta) \in 2^{BIL(\Theta)}\times 2^{BIL(\Theta)}$, where formulas
in $\Gamma$ are assumed to be true and formulas from $\Delta$ are
assumed to be false. If $(\mathcal{M}, w)$ is a pointed
$\Theta$-model, then we define $Th_{BIL}(\mathcal{M}, w)$, the
\emph{$BIL(\Theta)$-theory of} $(\mathcal{M}, w)$, as follows:
$$
Th_{BIL}(\mathcal{M}, w) := (\{ \varphi \in BIL(\Theta)\mid
\mathcal{M}, w \models_{BIL} \varphi \}, \{ \varphi \in
BIL(\Theta)\mid \mathcal{M}, w \not\models_{BIL} \varphi \}).
$$
We also introduce a special notation for the left and right
projection of $Th_{BIL}(\mathcal{M}, w)$, that is to say, for the
\emph{positive} and for the \emph{negative} part of this theory,
denoting them by $Th^+_{BIL}(\mathcal{M}, w)$ and
$Th^-_{BIL}(\mathcal{M}, w)$, respectively. Inclusion of
intuitionistic theories must then involve set-theoretic inclusion
of their respective projections, so that we define:
$$
(\Gamma, \Delta) \subseteq (\Gamma', \Delta') \Leftrightarrow
\Gamma \subseteq \Gamma'\textup{ and }\Delta \subseteq \Delta'.
$$
Also the set theoretic operations on the pairs of sets will be understood componentwise, e.g. we will assume that $(\Gamma, \Delta) \cap (\Gamma', \Delta') = (\Gamma\cap \Gamma', \Delta \cap \Delta')$ and similarly for $\cup$.

It is clear then that a $BIL(\Theta)$-theory $(\Gamma, \Delta)$ is
\emph{$BIL$-satisfiable} iff we have \[
(\Gamma, \Delta) \subseteq
Th_{BIL}(\mathcal{M}, w)
\]
 for some
$(\mathcal{M}, w)\in Pmod_\Theta$. In this case we will also write $\mathcal{M},
w \models_{BIL} (\Gamma, \Delta)$. If $\mathcal{M} \subseteq
\mathcal{N}$ and for every $w \in W$ it is true that
$Th_{BIL}(\mathcal{M}, w) = Th_{BIL}(\mathcal{N}, w)$, then we say
that $\mathcal{M}$ is an $BIL$\emph{-elementary submodel} of
$\mathcal{N}$ and write $\mathcal{M} \preccurlyeq_{BIL}
\mathcal{N}$. A standard generalization of the notion of a $BIL$-elementary submodel leads as to the concept of $BIL$-elementary embedding:
\begin{definition}\label{D:embedding}
	{\em Let $\mathcal{M}$, $\mathcal{N}$ be $\Theta$-models. An injective function $g: W \to U$ is called a \emph{ $BIL$-elementary embedding of
			$\mathcal{M}$ into $\mathcal{N}$} iff
		for all $v,u \in W$ it is true
		that
		\begin{align}
		&v\mathrel{\prec}u \Leftrightarrow g(v)\mathrel{\lhd}g(u)
		\label{E:c1}\tag{\text{rel}}\\
		&Th_{BIL}(\mathcal{M}, v) = Th_{BIL}(\mathcal{N}, g(v))\label{E:c2}\tag{\text{theories}}
		\end{align}
	}
\end{definition}
We collect some of the easy consequences of our series of definitions in the following lemma:
\begin{lemma}\label{L:embedding}
	Let $\mathcal{M}$, $\mathcal{N}$ be $\Theta$-models and let $\Sigma \subseteq \Theta$. Then the following statements hold:
	\begin{enumerate}
		\item $Th_{BIL}(\mathcal{M}, w)\cap (BIL(\Sigma),BIL(\Sigma)) = Th_{BIL}(\mathcal{M}\upharpoonright\Sigma, w)$.	
		\item If $\mathcal{M}
		\preccurlyeq_{BIL} \mathcal{N}$, then $(\mathcal{M}\upharpoonright\Sigma)
		\preccurlyeq_{BIL} (\mathcal{N}\upharpoonright\Sigma)$.
		\item If $\mathcal{M}
		\preccurlyeq_{BIL} \mathcal{N}$, then the identical function $id_W$ is a $BIL$-elementary embedding of $\mathcal{M}$ into $\mathcal{N}$.
		\item If $g:\mathcal{M}\cong\mathcal{N}$, then $g$ is a $BIL$-elementary embedding of $\mathcal{M}$ into $\mathcal{N}$; in particular, for any $w \in W$, we have $Th_{BIL}(\mathcal{M}, w) = Th_{BIL}(\mathcal{N}, g(w))$.
		\item A function $g$ is a
		$BIL$-elementary embedding of $\mathcal{M}$ into $\mathcal{N}$ iff
		there exists a (unique) $\mathcal{N}' \preccurlyeq_{BIL}
		\mathcal{N}$ such that $g: \mathcal{M} \cong \mathcal{N}'$. We will denote $\mathcal{N}'$ by $g(\mathcal{M})$.
		\item If $g$ is a
		$BIL$-elementary embedding of $\mathcal{M}$ into $\mathcal{N}$, then
		there exists a $\Theta$-model $\mathcal{M}'$ and function $g'$ such that we have: (a) $\mathcal{M} \preccurlyeq_{BIL} \mathcal{M}'$, (b) $g \subseteq g'$, and (c) $g':\mathcal{M}' \cong \mathcal{N}$.
	\end{enumerate}
\end{lemma}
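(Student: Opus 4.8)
The plan is to treat the six items in order, leaning on the earlier ones as I go. The two genuinely foundational facts are (1) and (4), each of which I would establish by a routine induction on the construction of a formula $\varphi \in BIL(\Sigma)$ (resp.\ $\varphi \in BIL(\Theta)$). For (1), the key observation is that passing to the reduct $\mathcal{M}\upharpoonright\Sigma$ changes neither the set of worlds nor the accessibility relation and only discards the values $V(p)$ for $p \notin \Sigma$; since a formula of $BIL(\Sigma)$ never mentions such $p$, the satisfaction clauses give $\mathcal{M}, w \models_{BIL} \varphi \Leftrightarrow \mathcal{M}\upharpoonright\Sigma, w \models_{BIL} \varphi$ for all $w$, whence the claimed componentwise equality of theories is immediate. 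For (4), the base case is exactly clause (i-atom), and the inductive steps for $\wedge,\vee,\to,\ll$ use clause (i-rel) together with the fact that $g$ is a bijection, so that quantification over $\succ$-successors (resp.\ $\prec$-predecessors) transfers faithfully across $g$; this yields $Th_{BIL}(\mathcal{M}, w) = Th_{BIL}(\mathcal{N}, g(w))$ for every $w$, which is condition (theories), while (rel) is just (i-rel) and injectivity follows from bijectivity.

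Items (2) and (3) are then short deductions. For (2), I would first note that the reduct operation commutes with the submodel relation, so $\mathcal{M}\upharpoonright\Sigma \subseteq \mathcal{N}\upharpoonright\Sigma$; the equality of $\Sigma$-theories at each $w \in W$ then follows by intersecting the hypothesis $Th_{BIL}(\mathcal{M}, w) = Th_{BIL}(\mathcal{N}, w)$ with $(BIL(\Sigma),BIL(\Sigma))$ and applying (1) on both sides. For (3), since $\mathcal{M}\subseteq\mathcal{N}$ forces $\prec = \lhd\!\upharpoonright\!(W\times W)$, the identity on $W$ satisfies (rel), while (theories) is the defining condition of $\preccurlyeq_{BIL}$, and injectivity is trivial.

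For (5), in the forward direction I would take $\mathcal{N}' := \mathcal{N}(rang(g))$ and verify $g:\mathcal{M}\cong\mathcal{N}'$ directly: bijectivity onto $rang(g)$ is clear, (i-rel) comes from (rel) together with $\lhd' = \lhd\!\upharpoonright\!(U'\times U')$, and (i-atom) comes from the atomic instance of (theories) together with $Y'(p) = Y(p)\cap U'$. That $\mathcal{N}'\preccurlyeq_{BIL}\mathcal{N}$ follows by combining (4), applied to the isomorphism $g$, with (theories): for $u = g(v)\in U'$ we get $Th_{BIL}(\mathcal{N}', u) = Th_{BIL}(\mathcal{M}, v) = Th_{BIL}(\mathcal{N}, u)$. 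Uniqueness holds because any such $\mathcal{N}''$ must have universe $rang(g)$ and, being a submodel of $\mathcal{N}$, is forced to equal $\mathcal{N}(rang(g))$. The converse direction composes the isomorphism $g$, which is an embedding into $\mathcal{N}'$ by (4), with the inclusion of $\mathcal{N}'$ into $\mathcal{N}$, an embedding by (3); concretely this amounts to checking (rel) and (theories) for $g$ viewed as a map into $\mathcal{N}$, using $\lhd' = \lhd\!\upharpoonright\!(U'\times U')$ and $\mathcal{N}'\preccurlyeq_{BIL}\mathcal{N}$.

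The main work is (6), and this is where I expect the only real care to be needed. Given an elementary embedding $g:\mathcal{M}\to\mathcal{N}$, I would pick a set $W'$ of fresh nodes disjoint from $W$ together with a bijection $h: U\setminus rang(g)\to W'$, set the universe of $\mathcal{M}'$ to be $W\cup W'$, and define $g' := g\cup h^{-1}$, a bijection of $W\cup W'$ onto $U$. I would then transport the structure of $\mathcal{N}$ back along $g'$, declaring $a\prec' b :\Leftrightarrow g'(a)\lhd g'(b)$ and $a\in V'(p):\Leftrightarrow g'(a)\in Y(p)$; this makes $g'$ an isomorphism onto $\mathcal{N}$ by construction, giving (c), and since $\prec'$ and the persistence of $V'$ are pulled back from a genuine $\Theta$-model, $\mathcal{M}'$ is again a $\Theta$-model. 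The delicate point is (a): that $\mathcal{M}$ sits inside $\mathcal{M}'$ not merely as a subset of worlds but as a $BIL$-elementary submodel. That $\mathcal{M}$ is a submodel, i.e.\ $\prec = \prec'\!\upharpoonright\!(W\times W)$ and $V(p) = V'(p)\cap W$, is exactly where (rel) and the atomic case of (theories) enter: for $v,u\in W$ we have $v\prec' u \Leftrightarrow g(v)\lhd g(u) \Leftrightarrow v\prec u$, and $v\in V'(p) \Leftrightarrow g(v)\in Y(p) \Leftrightarrow v\in V(p)$. Finally $\mathcal{M}\preccurlyeq_{BIL}\mathcal{M}'$ follows by running (4) on the isomorphism $g'$ and comparing with (theories): $Th_{BIL}(\mathcal{M}', v) = Th_{BIL}(\mathcal{N}, g(v)) = Th_{BIL}(\mathcal{M}, v)$ for each $v\in W$. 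Condition (b) is immediate from $g' = g\cup h^{-1}$.
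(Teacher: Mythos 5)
Your proof is correct and follows essentially the same route as the paper: Part 2 via the same chain of theory equalities obtained by intersecting with $(BIL(\Sigma),BIL(\Sigma))$ and applying Part 1, and Part 6 via the same transport-of-structure construction with fresh nodes and $g' = g \cup h^{-1}$ (the paper's $g \cup g''$ on $W \cup C$). The remaining parts, which the paper declares immediate or straightforward, you fill in with exactly the standard verifications the authors have in mind (induction on formula structure for Parts 1 and 4, and the submodel/isomorphism bookkeeping for Parts 3 and 5).
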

\begin{proof}
	Part 1 is immediate from the definition, as for Part 2, it is clear that if $\mathcal{M} \subseteq \mathcal{N}$, then also $(\mathcal{M}\upharpoonright\Sigma)
	\subseteq (\mathcal{N}\upharpoonright\Sigma)$. It remains to show the coincidence of theories in the reducts of $\mathcal{M}$ and $\mathcal{N}$. We reason as follows for any given $w \in W$:
	\begin{align*}
	Th_{BIL}(\mathcal{M}\upharpoonright\Sigma, w) &= Th_{BIL}(\mathcal{M}, w) \cap(BIL(\Sigma),BIL(\Sigma))&&\text{(by Part 1)}\\
	&= Th_{BIL}(\mathcal{N}, w) \cap (BIL(\Sigma),BIL(\Sigma)) &&\text{(by $\mathcal{M}
		\preccurlyeq_{BIL} \mathcal{N}$)}\\
	&= Th_{BIL}(\mathcal{N}\upharpoonright\Sigma, w)&&\text{(by Part 1)}
	\end{align*}
	Thus we get that $(\mathcal{M}\upharpoonright\Sigma)
	\preccurlyeq_{BIL} (\mathcal{N}\upharpoonright\Sigma)$.	
	
	Parts 3--5 are straightforward. As for Part 6, we reason as  in the classical case, that is to say, we
	choose a set $C$ outside $W \cup U$ such that there exists a bijection
	$g'':C \to (U \setminus h(W))$. We then set $g': = g \cup g''$ and define $\mathcal{M}'$ on the basis of $W': = W
	\cup C$, setting
	$u\mathrel{\prec'}u' :\Leftrightarrow g'(u)\mathrel{\lhd}g'(u')$
	and $u \in V'(p) :\Leftrightarrow g'(u) \in Y(p)$ for all $p \in \Theta$ and $u, u' \in W'$.
\end{proof}
We are now in a position to define the notion of a bi-intuitionistic type:
\begin{definition}\label{D:types}
	Let $(\mathcal{M}, w) \in Pmod_\Theta$, let
	$\Gamma, \Delta\subseteq BIL(\Theta)$. Then we say that:
	\begin{itemize}
		\item $(\Gamma, \Delta)$ is a successor $BIL$-type of $(\mathcal{M},w)$ iff:
		$$
		(\forall \Gamma' \Subset \Gamma)(\forall \Delta' \Subset
		\Delta)(\exists v \succ
		w)((\Gamma',
		\Delta')\subseteq Th_{BIL}(\mathcal{M}, v)).
		$$
		\item $(\Gamma, \Delta)$ is a predecessor $BIL$-type of $(\mathcal{M},w)$ iff:
		$$
		(\forall \Gamma' \Subset \Gamma)(\forall \Delta' \Subset
		\Delta)(\exists v \prec
		w)((\Gamma',
		\Delta')\subseteq Th_{BIL}(\mathcal{M}, v)).
		$$
	\end{itemize}
\end{definition}
It is easy to see that, due to the reflexivity of the accessibility relation $Th_{BIL}(\mathcal{M}, w)$ is always both a successor and a predecessor type of $(\mathcal{M}, w)$. More generally, we will say that a pair of
sets of formulas is a $BIL$-type of
$(\mathcal{M}, w)$ iff it is either a successor or a predecessor type of $(\mathcal{M}, w)$;
we will say that it is a $BIL$-type of $\mathcal{M}$
iff it is a type of $(\mathcal{M}, w)$ for some $w \in W$.

Another straightforward observation is that the bi-intuitionistic
types, as given by Definition \ref{D:types}, are related to
certain classes of bi-intuitionistic formulas. We state this
observation as a corollary (omitting the obvious proof):
\begin{corollary}\label{C:types-formulas}
	Let $(\mathcal{M}, w) \in Pmod_\Theta$, let
	$\Gamma, \Delta\subseteq BIL(\Theta)$. Then all of the following statements hold:
	\begin{enumerate}
		\item $(\Gamma, \Delta)$ is a successor $BIL$-type of $(\mathcal{M},
		w)$ iff:
		$$
		(\forall \Gamma' \Subset \Gamma)(\forall \Delta' \Subset
		\Delta)(\mathcal{M}, w
		\not\models_{BIL} \bigwedge\Gamma'\to
		\bigvee\Delta').
		$$
			\item $(\Gamma, \Delta)$ is a predecessor $BIL$-type of $(\mathcal{M},
		w)$ iff:
		$$
		(\forall \Gamma' \Subset \Gamma)(\forall \Delta' \Subset
		\Delta)(\mathcal{M}, w
		\models_{BIL} \bigwedge\Gamma'\ll
		\bigvee\Delta').
		$$
	\end{enumerate}
\end{corollary}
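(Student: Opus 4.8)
The plan is to derive both equivalences directly from the satisfaction clauses for the two principal connectives, $\to$ in Part 1 and $\ll$ in Part 2, matching the outcome against Definition \ref{D:types}. The only auxiliary facts I need are the two routine observations that, for any $\Gamma' \Subset \Gamma$, $\Delta' \Subset \Delta$ and any $v \in W$,
\[
\begin{gathered}
\mathcal{M}, v \models_{BIL} \textstyle\bigwedge\Gamma' \Leftrightarrow \Gamma' \subseteq Th^+_{BIL}(\mathcal{M}, v),\\
\mathcal{M}, v \not\models_{BIL} \textstyle\bigvee\Delta' \Leftrightarrow \Delta' \subseteq Th^-_{BIL}(\mathcal{M}, v),
\end{gathered}
\]
each proved by an immediate induction on the cardinality of the finite set, using the clauses for $\wedge$ and $\vee$ respectively. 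Taken together they say precisely that $\mathcal{M}, v \models_{BIL} \bigwedge\Gamma'$ and $\mathcal{M}, v \not\models_{BIL} \bigvee\Delta'$ both hold iff $(\Gamma', \Delta') \subseteq Th_{BIL}(\mathcal{M}, v)$, since inclusion of theories is componentwise.

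For Part 1 I would fix $\Gamma' \Subset \Gamma$ and $\Delta' \Subset \Delta$ and unfold the clause for $\to$:
$$
\mathcal{M}, w \not\models_{BIL} \textstyle\bigwedge\Gamma' \to \bigvee\Delta' \Leftrightarrow (\exists v \succ w)\bigl(\mathcal{M}, v \models_{BIL} \textstyle\bigwedge\Gamma' \text{ and } \mathcal{M}, v \not\models_{BIL} \textstyle\bigvee\Delta'\bigr),
$$
which by the two observations above collapses to $(\exists v \succ w)\bigl((\Gamma', \Delta') \subseteq Th_{BIL}(\mathcal{M}, v)\bigr)$. Quantifying universally over all $\Gamma' \Subset \Gamma$ and $\Delta' \Subset \Delta$ on both sides then reproduces verbatim the defining condition for $(\Gamma, \Delta)$ to be a successor $BIL$-type. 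Part 2 runs analogously: since the clause for $\ll$ is itself existential, unfolding it gives
$$
\mathcal{M}, w \models_{BIL} \textstyle\bigwedge\Gamma' \ll \bigvee\Delta' \Leftrightarrow (\exists v \prec w)\bigl((\Gamma', \Delta') \subseteq Th_{BIL}(\mathcal{M}, v)\bigr),
$$
and the same universal quantification over finite subsets yields the defining condition for a predecessor $BIL$-type, now with $\prec$ in place of $\succ$.

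I do not expect any genuine obstacle here --- this is exactly why the statement is flagged as obvious --- so the only points deserving care are the degenerate cases. I would adopt the usual conventions $\bigwedge\emptyset = \top$ (available as $\bot \to \bot$) and $\bigvee\emptyset = \bot$, under which the two displayed equivalences hold vacuously for empty $\Gamma'$ or $\Delta'$; and I would note that when $\Gamma' = \Delta' = \emptyset$ the required witness $v$ can always be taken to be $w$ itself by reflexivity of $\prec$, so that both type conditions are satisfied trivially in that corner. Everything else is a direct transcription between the semantic clauses and Definition \ref{D:types}.
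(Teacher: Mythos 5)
Your proof is correct and is precisely the argument the paper has in mind: the authors state the corollary ``omitting the obvious proof,'' and the intended proof is exactly your direct unfolding of the satisfaction clauses for $\to$ and $\ll$ against Definition~\ref{D:types}, with the finite-conjunction/disjunction observations supplying the translation to $(\Gamma',\Delta')\subseteq Th_{BIL}(\mathcal{M},v)$. Your attention to the empty-set conventions ($\bigwedge\emptyset = \bot\to\bot$, $\bigvee\emptyset=\bot$, with reflexivity of $\prec$ handling the degenerate corner) is a careful touch the paper leaves implicit.
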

We now define what it means for a bi-intuitionistic type to be realized:
\begin{definition}\label{D:types-realization}
	Let $(\mathcal{M}, w) \in Pmod_\Theta$, let $\mathcal{M}
	\preccurlyeq_{BIL} \mathcal{N}$, let
	$\Gamma, \Delta\subseteq BIL(\Theta)$. Then we say that:
	\begin{itemize}
		\item If $(\Gamma, \Delta)$ is a successor $BIL$-type of $(\mathcal{M},
		w)$, then $(\Gamma, \Delta)$ is realized in $\mathcal{N}$ iff there exists a $v \in U$ such that
		$w \lhd v$ and we have $(\Gamma,
		\Delta)\subseteq Th_{BIL}(\mathcal{N}, v)$.
		
		\item If $(\Gamma, \Delta)$ is a predecessor $BIL$-type of $(\mathcal{M},
		w)$, then $(\Gamma, \Delta)$ is realized in $\mathcal{N}$ iff there exists a $v \in U$ such that
		$v \lhd w$ and we have $(\Gamma,
		\Delta)\subseteq Th_{BIL}(\mathcal{N}, v)$.
	\end{itemize}
\end{definition}
For an intuitionistic model to realize all of its bi-intuitionistic types
in the sense of Definition \ref{D:types-realization} is a 
rare property that merits a special name:
\begin{definition}\label{D:saturation}
	Let $\mathcal{M}$ be a $\Theta$-model. We say that $\mathcal{M}$
	is $BIL$-saturated iff it realizes all $BIL$-types
	of $\mathcal{M}$.
\end{definition}
Another interesting situation occurs when a model happens to realize every type of its proper $BIL$-elementary submodel. We collect some properties of type realization in the following lemma:
\begin{lemma}\label{L:type-realization}
	Let $\mathcal{M}'
	\preccurlyeq_{BIL}\mathcal{M}
	\preccurlyeq_{BIL} \mathcal{N}$ be a $BIL$-elementary chain of $\Theta$-models, let $\mathcal{N}'$ be a  $\Theta$-model such that for some functions $g$ we have $g:\mathcal{N}\cong \mathcal{N}'$. Finally, let $\Sigma \subseteq \Theta$. Then the following statements hold:
	\begin{enumerate}
		\item Every $BIL$-type of $\mathcal{M}'$, that is realized in $\mathcal{M}$, is also realized in $\mathcal{N}$.
		
		\item If $\mathcal{N}$ realizes every $BIL$-type of $\mathcal{M}$, then $\mathcal{N}\upharpoonright\Sigma$ realizes every $BIL$-type of $\mathcal{M}\upharpoonright\Sigma$.
		
		\item For every $v \in U$, $(\Gamma, \Delta)$ is a successor (resp. predecessor) $BIL$-type of $(\mathcal{N}, v)$ iff it is a successor (resp. predecessor) $BIL$-type of $(\mathcal{N}', g(v))$.    
	\end{enumerate} 	
\end{lemma}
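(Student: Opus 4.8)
The plan is to prove all three parts by transporting realizing witnesses along the maps supplied by the hypotheses, leaning throughout on Lemma \ref{L:embedding}. In each case the point is that a witness survives the passage from one model to another because both the accessibility relation and the local $BIL$-theories are preserved; no genuinely new combinatorics is required.

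For Part 1, I would take a $BIL$-type $(\Gamma, \Delta)$ of $\mathcal{M}'$, say a successor type of $(\mathcal{M}', w)$ with $w \in W'$ (the predecessor case being symmetric), and assume it is realized in $\mathcal{M}$. By Definition \ref{D:types-realization} this provides a witness $v \in W$ with $w \prec v$ and $(\Gamma, \Delta) \subseteq Th_{BIL}(\mathcal{M}, v)$. Since $\mathcal{M} \preccurlyeq_{BIL} \mathcal{N}$, we have $W \subseteq U$, the map $id_W$ is a $BIL$-elementary embedding by Lemma \ref{L:embedding}(3), so $w \lhd v$, and moreover $Th_{BIL}(\mathcal{M}, v) = Th_{BIL}(\mathcal{N}, v)$. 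Hence the very same $v$ witnesses realization in $\mathcal{N}$; note that $\mathcal{M}' \preccurlyeq_{BIL} \mathcal{N}$ by transitivity of the elementary chain, so Definition \ref{D:types-realization} does apply to the pair $\mathcal{M}', \mathcal{N}$.

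For Part 2, the key preliminary is that a $BIL$-type of $\mathcal{M}\upharpoonright\Sigma$ is, verbatim, a $BIL$-type of $\mathcal{M}$. Indeed, for $\Gamma, \Delta \subseteq BIL(\Sigma)$ and any $\Gamma' \Subset \Gamma$, $\Delta' \Subset \Delta$, Lemma \ref{L:embedding}(1) gives $(\Gamma', \Delta') \subseteq Th_{BIL}(\mathcal{M}\upharpoonright\Sigma, v)$ iff $(\Gamma', \Delta') \subseteq Th_{BIL}(\mathcal{M}, v)$, since $\Gamma', \Delta' \subseteq BIL(\Sigma)$; as the accessibility relation is unchanged under reduction, the successor/predecessor conditions of Definition \ref{D:types} coincide for $\mathcal{M}\upharpoonright\Sigma$ and $\mathcal{M}$. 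Thus any $BIL$-type of $\mathcal{M}\upharpoonright\Sigma$ is a $BIL$-type of $\mathcal{M}$, and by hypothesis it is realized in $\mathcal{N}$ via some witness $v'$. Applying Lemma \ref{L:embedding}(1) to $\mathcal{N}$ shows $(\Gamma, \Delta) \subseteq Th_{BIL}(\mathcal{N}\upharpoonright\Sigma, v')$, and since $(\mathcal{M}\upharpoonright\Sigma) \preccurlyeq_{BIL} (\mathcal{N}\upharpoonright\Sigma)$ by Lemma \ref{L:embedding}(2), the same $v'$ realizes the type in $\mathcal{N}\upharpoonright\Sigma$.

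For Part 3, I would use that $g$ is an isomorphism, so by Lemma \ref{L:embedding}(4) it preserves accessibility in both directions and satisfies $Th_{BIL}(\mathcal{N}, u) = Th_{BIL}(\mathcal{N}', g(u))$ for every $u \in U$. In the forward direction, a witness $u$ with $v \lhd u$ and $(\Gamma', \Delta') \subseteq Th_{BIL}(\mathcal{N}, u)$ is sent to $g(u)$ with $g(v) \lhd' g(u)$ and $(\Gamma', \Delta') \subseteq Th_{BIL}(\mathcal{N}', g(u))$; in the backward direction, since $g$ is a bijection every $\lhd'$-successor of $g(v)$ has the form $g(u)$, and one runs the same equivalences through $g^{-1}$. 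The successor conditions of Definition \ref{D:types} therefore match at $(\mathcal{N}, v)$ and $(\mathcal{N}', g(v))$, and the predecessor case is identical. I do not expect any real obstacle in this lemma; the only step demanding a moment's care is the identification in Part 2 of the types of a reduct with the types of the full model, which works precisely because one only ever tests finitely many formulas, all lying in $BIL(\Sigma)$.
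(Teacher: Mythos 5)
Your proposal is correct and takes essentially the same approach as the paper: in each part a realizing witness is transported using the preservation of the accessibility relation and of local $BIL$-theories, with Lemma \ref{L:embedding}.1 doing the work of identifying theories of reducts in Part 2. The only cosmetic difference is that in Parts 2 and 3 the paper abbreviates the argument by invoking Corollary \ref{C:types-formulas} (reducing the type conditions to satisfaction of the formulas $\bigwedge\Gamma'\to\bigvee\Delta'$ and $\bigwedge\Gamma'\ll\bigvee\Delta'$ at the single point in question, which transfer immediately under reducts and isomorphisms), whereas you chase the finite witnesses directly through Definition \ref{D:types}; the two are interchangeable.
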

\begin{proof}
	(Part 1). Assume that $(\Gamma, \Delta)$ is a successor $BIL$-type of $(\mathcal{M}',w)$ that is realized in $\mathcal{M}$. This means that for an appropriate $v \in W$ such that $w\mathrel{\prec}v$ we have $(\Gamma,
	\Delta)\subseteq Th_{BIL}(\mathcal{M}, v) = Th_{BIL}(\mathcal{N}, v)$. Note that, by $\mathcal{M}
	\preccurlyeq_{BIL} \mathcal{N}$, we also have $w\mathrel{\lhd}v$, whence $(\Gamma, \Delta)$ is also realized in $\mathcal{N}$. We argue similarly in case when $(\Gamma, \Delta)$ is a predecessor $BIL$-type.
	
	(Part 2). Assume that $w \in W$, and that $(\Gamma, \Delta)$ is a successor (resp. predecessor) $BIL$-type of $(\mathcal{M}\upharpoonright\Sigma, w)$. Then it follows from the respective part of Corollary \ref{C:types-formulas}, that $(\Gamma, \Delta)$ is also a successor (resp. predecessor) $BIL$-type of $(\mathcal{M}, w)$. But then $(\Gamma, \Delta)$ must be realized in $\mathcal{N}$, which means, by definition, that for an appropriate  $v \in U$ such that $w\mathrel{\lhd}v$ (resp. $v\mathrel{\lhd}w$), we have that $(\Gamma, \Delta)\subseteq Th_{BIL}(\mathcal{N}, v)$. But this means, further, that we also have:
	\begin{align*}
	(\Gamma, \Delta) &= (\Gamma\cap BIL(\Sigma), \Delta\cap  BIL(\Sigma)) = (\Gamma, \Delta)\cap  BIL(\Sigma)\\ 
	&\subseteq Th_{BIL}(\mathcal{N}, v)\cap  (BIL(\Sigma),BIL(\Sigma)) = Th_{BIL}(\mathcal{N}\upharpoonright\Sigma, v),
	\end{align*}
	where the last equality holds by Lemma \ref{L:embedding}.1. This means that $(\Gamma, \Delta)$ is also realized in $\mathcal{N}\upharpoonright\Sigma$.
	
	(Part 3). By Corollary \ref{C:types-formulas} and Lemma \ref{L:embedding}.4.   
\end{proof}

\section{Bi-asimulations and Bi-unravellings}\label{S:unravel}
We devote this subsection to adaptations of some classic constructions in the model theory of classical modal logic to the case of bi-intuitionistic logic. The first one is the bi-intuitionistic analogue of bisimulation relation, originally introduced in \cite{ba1}:
\begin{definition}\label{D:asimulation}
{\em Let $(\mathcal{M}_1, w_1)$, $(\mathcal{M}_2, w_2)\in Pmod_\Theta$. A binary relation $A$ is called a
\emph{bi-asimulation from $(\mathcal{M}_1,w_1)$ to
$(\mathcal{M}_2,w_2)$} iff for any $i,j$ such that $\{ i,j \} = \{
1, 2 \}$, any $v \in W_i$, $s,t \in W_j$, any propositional letter
$p \in \Theta$ the following conditions hold:
\begin{align}
&A \subseteq (W_1 \times W_2) \cup (W_2\times
W_1)\label{E:c22}\tag{\text{w-type}}\\
&w_1\mathrel{A}w_2\label{E:c11}\tag{\text{elem}}\\
&(v\mathrel{A}s \wedge  v \in V_i(p)) \Rightarrow s \in V_j(p)))\label{E:c33}\tag{\text{s-atom}}\\
&(v\mathrel{A}s \wedge s\mathrel{R_j}t) \Rightarrow \exists u \in
W_i(v\mathrel{R_i}u \wedge t\mathrel{A}u \wedge
u\mathrel{A}t)\label{E:c44}\tag{\text{s-back}}\\
&(v\mathrel{A}s \wedge u\mathrel{R_i}v) \Rightarrow \exists t \in
W_j(t\mathrel{R_j}s \wedge t\mathrel{A}u \wedge
u\mathrel{A}t)\label{E:c55}\tag{\text{s-forth}}
\end{align}
}
\end{definition}
Bi-intuitionistic propositional formulas are known to be preserved
under bi-asimulations. More precisely, if $(\mathcal{M}_1, w_1)$, and
$(\mathcal{M}_2, w_2)$ are pointed $\Theta$-models and $A$ is an
bi-asimulation from $(\mathcal{M}_1,w_1)$ to $(\mathcal{M}_2,w_2)$,
then $Th^+_{BIL}(\mathcal{M}_1, w_1) \subseteq
Th^+_{BIL}(\mathcal{M}_2, w_2)$. Moreover, preservation under
bi-asimulations is known to semantically characterize $BIL$ as a
fragment of classical first-order logic, see \cite{ba1,o3} for 
proofs.

Next we consider bi-asimulations between $BIL$-saturated
models. The following lemma states that such bi-asimulations can be
defined in an easy and natural way:
\begin{lemma}\label{L:asimulations}
	Let $(\mathcal{M}_1, w_1)$, $(\mathcal{M}_2, w_2)\in Pmod_\Theta$. If \[
	Th^+_{BIL}(\mathcal{M}_1, w_1) \subseteq
	Th^+_{BIL}(\mathcal{M}_2, w_2)
	\]
	and both $\mathcal{M}_1$ and
	$\mathcal{M}_2$ are intuitionistically saturated, then the
	relation $A$ such that for all $u \in W_i$, $s \in W_j$ if $\{ i,j
	\} = \{ 1,2 \}$, then
	$$
	u\mathrel{A}s \Leftrightarrow (Th^+_{BIL}(\mathcal{M}_i, u)
	\subseteq Th^+_{BIL}(\mathcal{M}_j, s))
	$$
	is a bi-asimulation from $(\mathcal{M}_1, w_1)$ to $(\mathcal{M}_2,
	w_2)$.
\end{lemma}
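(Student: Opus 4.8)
The plan is to verify in turn each of the five conditions \eqref{E:c22}--\eqref{E:c55} of Definition \ref{D:asimulation}. Conditions \eqref{E:c22} and \eqref{E:c11} are immediate: the former holds by the very shape of the definition of $A$, and the latter is exactly the hypothesis $Th^+_{BIL}(\mathcal{M}_1,w_1)\subseteq Th^+_{BIL}(\mathcal{M}_2,w_2)$. Condition \eqref{E:c33} is equally direct, since $v\in V_i(p)$ means $p\in Th^+_{BIL}(\mathcal{M}_i,v)$, and then $v\mathrel{A}s$ transports $p$ into $Th^+_{BIL}(\mathcal{M}_j,s)$, i.e.\ $s\in V_j(p)$. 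Since both $A$ and the conditions are symmetric in the two indices with $\{i,j\}=\{1,2\}$, it suffices to argue the remaining two conditions uniformly in $i,j$; the substance lies entirely in \eqref{E:c44} and \eqref{E:c55}, which I would treat dually.

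For \eqref{E:c44}, suppose $v\mathrel{A}s$ and $s\mathrel{R_j}t$. The key step is to show that the full theory $(Th^+_{BIL}(\mathcal{M}_j,t),Th^-_{BIL}(\mathcal{M}_j,t))$ is a successor $BIL$-type of $(\mathcal{M}_i,v)$. By Corollary \ref{C:types-formulas}.1 this amounts to checking $\mathcal{M}_i,v\not\models_{BIL}\bigwedge\Gamma'\to\bigvee\Delta'$ for all finite $\Gamma'\Subset Th^+_{BIL}(\mathcal{M}_j,t)$ and $\Delta'\Subset Th^-_{BIL}(\mathcal{M}_j,t)$. I would argue by contradiction: if this implication were satisfied at $v$, then, belonging to $Th^+_{BIL}(\mathcal{M}_i,v)\subseteq Th^+_{BIL}(\mathcal{M}_j,s)$ by $v\mathrel{A}s$, it would also hold at $s$; since $s\prec_j t$, the clause for $\to$ would then force $\bigvee\Delta'$ to hold at $t$ (because $\bigwedge\Gamma'$ does, as $\Gamma'\subseteq Th^+_{BIL}(\mathcal{M}_j,t)$), contradicting $\Delta'\subseteq Th^-_{BIL}(\mathcal{M}_j,t)$. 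Having established the type, I would invoke $BIL$-saturation of $\mathcal{M}_i$ to realize it at some $u$ with $v\prec_i u$.

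The decisive point is then that realization delivers a two-sided inclusion. From $(Th^+_{BIL}(\mathcal{M}_j,t),Th^-_{BIL}(\mathcal{M}_j,t))\subseteq Th_{BIL}(\mathcal{M}_i,u)$ one reads off $Th^+_{BIL}(\mathcal{M}_j,t)\subseteq Th^+_{BIL}(\mathcal{M}_i,u)$, which is exactly $t\mathrel{A}u$, while the inclusion of negative parts $Th^-_{BIL}(\mathcal{M}_j,t)\subseteq Th^-_{BIL}(\mathcal{M}_i,u)$ is, by contraposition, equivalent to $Th^+_{BIL}(\mathcal{M}_i,u)\subseteq Th^+_{BIL}(\mathcal{M}_j,t)$, i.e.\ $u\mathrel{A}t$. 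Together with $v\prec_i u$ this supplies the witness required by \eqref{E:c44}. Condition \eqref{E:c55} is dual and in fact more direct: given $v\mathrel{A}s$ and $u\mathrel{R_i}v$, for every finite $\Gamma'\Subset Th^+_{BIL}(\mathcal{M}_i,u)$ and $\Delta'\Subset Th^-_{BIL}(\mathcal{M}_i,u)$ the formula $\bigwedge\Gamma'\ll\bigvee\Delta'$ holds at $v$ (witnessed by its predecessor $u$), hence at $s$ by $v\mathrel{A}s$; by Corollary \ref{C:types-formulas}.2 this makes $(Th^+_{BIL}(\mathcal{M}_i,u),Th^-_{BIL}(\mathcal{M}_i,u))$ a predecessor $BIL$-type of $(\mathcal{M}_j,s)$, which $BIL$-saturation of $\mathcal{M}_j$ realizes at some $t\prec_j s$, yielding $t\mathrel{A}u$ and $u\mathrel{A}t$ exactly as before.

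I expect the main obstacle to be precisely the recovery of the mutual relation $u\mathrel{A}t\wedge t\mathrel{A}u$ — that is, the equality of positive theories — from a hypothesis that only provides a one-directional inclusion of positive theories. This is what forces one to form the full two-sided type (including the negative part) and to realize it in a saturated model, rather than merely matching positive theories world by world; the back-and-forth symmetry demanded by a bi-asimulation is restored exactly through the interplay between negative-part inclusion and reverse positive-part inclusion.
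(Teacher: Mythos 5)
Your proposal is correct and follows essentially the same route as the paper's own proof: both reduce conditions \eqref{E:c44} and \eqref{E:c55} to showing that the \emph{full} theory $Th_{BIL}$ of the relevant world is a successor (resp.\ predecessor) $BIL$-type of the source world --- via the formulas $\bigwedge\Gamma'\to\bigvee\Delta'$ and $\bigwedge\Gamma'\ll\bigvee\Delta'$ of Corollary \ref{C:types-formulas} transported along the positive-theory inclusion --- and then invoke saturation to realize that type, so that satisfaction of both the positive and negative parts yields equality of theories and hence the two required relations $t\mathrel{A}u$ and $u\mathrel{A}t$. Your explicit contraposition argument turning the negative-part inclusion into the reverse positive-part inclusion is just an unpacking of the paper's one-line observation that realizing a full theory forces $Th_{BIL}(\mathcal{M}_i,w)=Th_{BIL}(\mathcal{M}_j,t)$.
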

\begin{proof}
	The relation $A$, as defined in the lemma, obviously satisfies
	conditions \eqref{E:c22}, \eqref{E:c11}, and \eqref{E:c33} given
	in Definition \ref{D:asimulation}. We check the remaining
	conditions.
	
	Condition \eqref{E:c44}. Assume that $u\mathrel{A}s$, so that $Th^+_{BIL}(\mathcal{M}_i, u)
	\subseteq Th^+_{BIL}(\mathcal{M}_j, s)$, and that for some $t \in
	W_j$ we have $s\mathrel{R_j}t$. Let $(\Gamma', \Delta') \subseteq
	Th_{BIL}(\mathcal{M}_j, t)$ be finite. Then, of course, we have:
	$$
	\mathcal{M}_j, s \not\models_{BIL} \bigwedge\Gamma' \to
	\bigvee\Delta',
	$$
	and, by $u\mathrel{A}s$:
	$$
	\mathcal{M}_i, u \not\models_{BIL} \bigwedge\Gamma' \to
	\bigvee\Delta'.
	$$
	The latter means that $(\Gamma', \Delta') $ must be $BIL$-satisfied
	by some $R_i$-successor of $u$. Therefore, by bi-intuitionistic
	saturation of both $\mathcal{M}_1$ and $\mathcal{M}_2$, we get
	that for some $w \in W_i$ such that $u\mathrel{R_i}w$ the theory
	$Th_{BIL}(\mathcal{M}_j, t)$ is $BIL$-satisfied at $(\mathcal{M}_i,
	w)$. It follows immediately that
	$$
	Th_{BIL}(\mathcal{M}_i, w) = Th_{BIL}(\mathcal{M}_j, t),
	$$
	whence
	$$
	Th^+_{BIL}(\mathcal{M}_i, w) = Th^+_{BIL}(\mathcal{M}_j, t),
	$$
	which, in turn, means that both $w\mathrel{A}t$ and
	$t\mathrel{A}w$.
	
		Condition \eqref{E:c55}. Assume that $u\mathrel{A}s$, so that $Th^+_{BIL}(\mathcal{M}_i, u)
	\subseteq Th^+_{BIL}(\mathcal{M}_j, s)$, and that for some $w \in
	W_i$ we have $w\mathrel{R_i}u$. Let $(\Gamma', \Delta') \subseteq
	Th_{BIL}(\mathcal{M}_i, w)$ be finite. Then, of course, we have:
	$$
	\mathcal{M}_i, u \models_{BIL} \bigwedge\Gamma' \ll
	\bigvee\Delta',
	$$
	and, by $u\mathrel{A}s$:
	$$
	\mathcal{M}_j, s \models_{BIL} \bigwedge\Gamma' \ll
	\bigvee\Delta'.
	$$
	The latter means that $(\Gamma', \Delta') $ must be $BIL$-satisfied
	by some $R_j$-predecessor of $s$. Therefore, by bi-intuitionistic
	saturation of both $\mathcal{M}_1$ and $\mathcal{M}_2$, we get
	that for some $t \in W_j$ such that $t\mathrel{R_j}s$ the theory
	$Th_{BIL}(\mathcal{M}_i, w)$ is $BIL$-satisfied at $(\mathcal{M}_j,
	t)$. It follows immediately that
	$$
	Th_{BIL}(\mathcal{M}_i, w) = Th_{BIL}(\mathcal{M}_j, t),
	$$
	whence
	$$
	Th^+_{BIL}(\mathcal{M}_i, w) = Th^+_{BIL}(\mathcal{M}_j, t),
	$$
	which, in turn, means that both $w\mathrel{A}t$ and
	$t\mathrel{A}w$.
\end{proof}

The second idea is a bi-intuitionistic adaptation of unravelling, a well-known technique also applicable, as seen in \cite{chagrov}, to intuitionistic propositional logic. For the setting of $BIL$ we modify this idea to obtain the notion of bi-unravelling.

For a given
$(\mathcal{M}, w)\in Pmod_\Theta$, the model $\mathcal{M}^{un}_w = \langle
W^{un}_w, \prec^{un}_w, V^{un}_w\rangle$, called the
bi-unravelling of $\mathcal{M}$ around $w$ is defined as follows.
\begin{itemize}
\item $W^{un}_w = \{ \bar{u}_n \in W^n\mid u_1 = w,\,(\forall i <
n)(u_i\mathrel{(\prec \cup \succ)}u_{i + 1}\,\&\,u_i \neq u_{i + 1}) \}$;
in other words, $W^{un}_w$ is the set of finite $(\prec \cup \succ)$-chains starting at $w$ where no two adjacent nodes coincide.

\item $\prec^{un}_w$ is the reflexive and transitive closure of the
following relation:
\begin{align*}
\rho^{un}_w = \{ (s,t)\in W^{un}_w \times W^{un}_w \mid
(\exists n < \omega \exists \bar{w}_{n + 1} \in W^{n+1})((s = &\bar{w}_n\,\&\,t = \bar{w}_{n + 1}\,\&\,w_n\mathrel{\prec} w_{n + 1})\vee\\ 
&(s = \bar{w}_{n + 1}\,\&\,t = \bar{w}_{n}\,\&\,w_n\mathrel{\succ}w_{n + 1})) \};
\end{align*}
\item For arbitrary $p \in \Theta$, we have $V^{un}_w(p) = \{
\bar{u}_n \in W^{un}_w \mid u_n \in V(p) \}$.
\end{itemize}
The structure of bi-unravellings is a bit more complicated than the structure of simple unravellings applied in classical modal logic and intuitionistic logic. In particular, bi-unravellings do not typically lead to tree-like frames. Therefore, before we relate bi-unravellings to bi-asimulations and bi-intuitionistic theories, we need to look somewhat more closely into the structure of bi-unravelled models. We do this by proving a couple of technical lemmas.
\begin{lemma}\label{L:rho-relation}
Let $(\mathcal{M}, w)\in Pmod_\Theta$, let $k > 0$, let the tuple $\bar{\alpha}_k \in (W^{un}_w)^k$ consist of pairwise distinct tuples such that for all $i < k$ we have $\alpha_i\mathrel{\rho^{un}_w}\alpha_{i + 1}$. Then there exist $m, n \geq 0$ and (not necessarily pairwise distinct) $\bar{v}_m \in W^m$ and $\bar{u}_n \in W^n$ such that: (1) $v_m\mathrel{\prec}\ldots\mathrel{\prec}v_1\mathrel{\prec} end(\alpha_{m + 1})\mathrel{\prec}u_1\mathrel{\prec}\ldots\mathrel{\prec}u_n$, (2) $k = m + n + 1$, (3) for all $1 \leq i \leq m$, $\alpha_i = (\alpha_{m + 1})^\frown\bar{v}_{m - i + 1}$, and (4) for all $1 \leq i \leq n$, $\alpha_{m + 1 + i} = (\alpha_{m + 1})^\frown\bar{u}_{i}$. In other words, $\bar{\alpha}_k$ is representable in the form:
\begin{align*}
	\alpha_1 &= (\alpha_{m + 1})^\frown(\bar{v}_{m})\\
	&\vdots\\
	\alpha_m &= (\alpha_{m + 1})^\frown v_1\\
	&\alpha_{m + 1}\\
	\alpha_{m + 2} &= (\alpha_{m + 1})^\frown u_1\\
	&\vdots\\
	\alpha_k &= \alpha_{m + n + 1} = (\alpha_{m + 1})^\frown(\bar{u}_{n}).
\end{align*}	
\end{lemma}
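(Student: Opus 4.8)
The plan is to walk along the edge sequence $\alpha_1 \mathrel{\rho^{un}_w} \cdots \mathrel{\rho^{un}_w} \alpha_k$ one step at a time and to classify each step by length. By the definition of $\rho^{un}_w$, each edge $\alpha_i \mathrel{\rho^{un}_w} \alpha_{i+1}$ is of exactly one of two mutually exclusive types: an \emph{extension} (type E), where $\alpha_{i+1} = (\alpha_i)^\frown x$ for some $x \in W$ with $end(\alpha_i) \mathrel{\prec} x$, so that $\alpha_{i+1}$ is one entry longer than $\alpha_i$; or a \emph{truncation} (type T), where $\alpha_i = (\alpha_{i+1})^\frown y$ for some $y \in W$ with $end(\alpha_i) \mathrel{\prec} end(\alpha_{i+1})$, so that $\alpha_{i+1}$ is one entry shorter than $\alpha_i$. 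In both cases $end(\alpha_i) \mathrel{\prec} end(\alpha_{i+1})$, and the shorter of the two tuples is a prefix of the longer one.

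The crux of the argument, and the step I expect to be the main obstacle, is to show that no E-edge may be immediately followed by a T-edge. Suppose otherwise, say $\alpha_i \mathrel{\rho^{un}_w} \alpha_{i+1}$ is an extension with $\alpha_{i+1} = (\alpha_i)^\frown x$, while $\alpha_{i+1} \mathrel{\rho^{un}_w} \alpha_{i+2}$ is a truncation, so that $\alpha_{i+2}$ is $\alpha_{i+1}$ with its final entry removed; then $\alpha_{i+2} = \alpha_i$, contradicting the assumption that the $\alpha$'s are pairwise distinct. Hence, reading the string of edge types from $\alpha_1$ to $\alpha_k$, no E is ever followed by a T, which forces the string to have the shape $T^m E^n$ for some $m, n \geq 0$ with $m + n = k - 1$. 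This already yields clause (2), and it singles out $\alpha_{m+1}$ as the (shortest) tuple at which the path stops truncating and starts extending; by construction $\alpha_{m+1}$ is a common prefix of every $\alpha_i$.

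It then remains to extract clauses (1), (3) and (4) from this normal form by two routine inductions hinging on $\alpha_{m+1}$. For the truncation segment I would put $v_j := end(\alpha_{m - j + 1})$ for $1 \leq j \leq m$ and check by induction on $j$ that $\alpha_{m - j + 1} = (\alpha_{m+1})^\frown \bar{v}_j$; rewriting with $i = m - j + 1$ this is exactly clause (3), and the truncation condition on the step $\alpha_{m-j+1} \mathrel{\rho^{un}_w} \alpha_{m-j+2}$ gives $v_j \mathrel{\prec} v_{j-1}$ under the convention $v_0 := end(\alpha_{m+1})$, i.e. the descending half $v_m \mathrel{\prec} \cdots \mathrel{\prec} v_1 \mathrel{\prec} end(\alpha_{m+1})$ of clause (1). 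Symmetrically, putting $u_j := end(\alpha_{m + 1 + j})$ for $1 \leq j \leq n$ and inducting on $j$ gives $\alpha_{m + 1 + j} = (\alpha_{m+1})^\frown \bar{u}_j$, which is clause (4), while the extension condition yields $u_{j-1} \mathrel{\prec} u_j$ under the convention $u_0 := end(\alpha_{m+1})$, i.e. the ascending half $end(\alpha_{m+1}) \mathrel{\prec} u_1 \mathrel{\prec} \cdots \mathrel{\prec} u_n$. Splicing the two halves at the hinge $end(\alpha_{m+1})$ produces the full chain of clause (1).

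Once the distinctness observation of the second paragraph is in hand, the whole geometry --- a single descent through $\prec$-predecessors followed by a single ascent through $\prec$-successors, all dangling from the common prefix $\alpha_{m+1}$ --- is completely forced, and everything else is bookkeeping with prefixes and endpoints. The only points requiring a little care are the degenerate boundaries $m = 0$ and $n = 0$, where $\bar{v}_m$ respectively $\bar{u}_n$ is the empty tuple $\Lambda$, and the shared convention $v_0 = u_0 = end(\alpha_{m+1})$ that lets the two inductions meet correctly at the pivot.
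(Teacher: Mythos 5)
Your proof is correct, and it turns on exactly the same combinatorial fact as the paper's: an extension edge followed immediately by a truncation edge returns to the tuple two steps back ($\alpha_{i+2} = \alpha_i$), which pairwise distinctness forbids. The difference is purely architectural. The paper argues by induction on $k$, peeling off the first edge of the path: when that edge is a truncation it prepends $w_{r+1}$ to the descending segment supplied by the induction hypothesis, and when it is an extension it invokes the distinctness argument --- your ``no E before T'' observation, there appearing as the derivation $\alpha_3 = \bar{w}_r = \alpha_1$ in Case 2 of the induction step --- to force $m_1 = 0$ in the tail's decomposition before prepending to the ascending segment. You instead prove the exclusion once, globally, conclude that the edge-type string has the normal form $T^m E^n$ (any string over $\{T,E\}$ avoiding the factor $ET$ has this shape), and then recover clauses (1), (3), (4) by two routine inductions from the pivot $\alpha_{m+1}$, with the explicit witnesses $v_j = end(\alpha_{m-j+1})$ and $u_j = end(\alpha_{m+1+j})$ and the shared convention $v_0 = u_0 = end(\alpha_{m+1})$. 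What your organization buys is that the single non-trivial step is isolated and used exactly once, and the chain of clause (1) falls out transparently from the uniform per-edge fact $end(\alpha_i) \prec end(\alpha_{i+1})$, which the paper never states explicitly; what the paper's single induction buys is economy, folding the normal-form claim and the bookkeeping into one pass (at the cost of leaving some of the chain-splicing verifications in Case 1 implicit). Both arguments are complete and rigorous, and your handling of the degenerate boundaries $m=0$, $n=0$ via $\Lambda$ matches the paper's induction basis.
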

\begin{proof}
We proceed by induction on $k > 0$.

\textit{Induction Basis}. $k = 1$. Then we set $m = n := 0$, $\bar{v}_m =\bar{u}_n := \Lambda$, and get the Lemma trivially satisfied.

\textit{Induction Step}. $k = l + 1$ for some $l \geq 1$. Then compare $\alpha_1$ and $\alpha_2$. Two cases are possible:

\textit{Case 1}. There exist $\bar{w}_{r + 1} \in W^{r+1}$ such that $\alpha_1 = \bar{w}_{r + 1}$, $\alpha_2 = \bar{w}_{r}$, and $w_r\mathrel{\succ} w_{r + 1}$. Then we apply Induction Hypothesis to $(\alpha_2,\ldots, \alpha_k)$ and find $m_1, n_1 \geq 0$ and (not necessarily pairwise distinct) $\bar{v}'_{m_1} \in W^{m_1}$ and $\bar{u}'_{n_1} \in W^{n_1}$ such that: (1) $v'_{m_1}\mathrel{\prec}\ldots\mathrel{\prec}v'_1\mathrel{\prec} end(\alpha_{m'_1 + 2})\mathrel{\prec}u'_1\mathrel{\prec}\ldots\mathrel{\prec}u'_{n_1}$, (2) $k - 1 = m_1 + n_1 + 1$, (3) for all $1 \leq i \leq m_1$, $\alpha_{i + 1} = (\alpha_{m_1 + 2})^\frown\bar{v}'_{m_1 - i + 1}$, and (4) for all $1 \leq i \leq n_1$, $\alpha_{m_1 + 2 + i} = (\alpha_{m_1 + 2})^\frown\bar{u}'_{i}$. In other words, $(\alpha_2,\ldots, \alpha_k)$ is representable in the form:
\begin{align*}
	\alpha_2 &= (\alpha_{m_1 + 2})^\frown\bar{v}'_{m_1}\\
	&\vdots\\
	\alpha_{m_1 + 1} &= (\alpha_{m_1 + 2})^\frown v'_1\\
	&\alpha_{m_1 + 2}\\
	\alpha_{m_1 + 2} &= (\alpha_{m_1 + 2})^\frown u'_1\\
	&\vdots\\
	\alpha_k &= \alpha_{m_1 + n_1 + 2} = (\alpha_{m_1 + 2})^\frown\bar{u}'_{n_1}
\end{align*}
But then we can set $m: = m_1 + 1$, $n : = n_1$, $\bar{v}_m  := (\bar{v}'_{m_1})^\frown(w_{r + 1})$, $\bar{u}_{n}:= \bar{u}'_{n_1}$, and get the Lemma verified.

\textit{Case 2}. There exist $\bar{w}_{r + 1} \in W^{r+1}$ such that $\alpha_1 = \bar{w}_{r}$, $\alpha_2 = \bar{w}_{r + 1}$, and $w_r \prec w_{r + 1}$. Again we apply Induction Hypothesis to $(\alpha_2,\ldots, \alpha_k)$ and find $m_1, n_1 \geq 0$ and (not necessarily pairwise distinct) $\bar{v}'_{m_1} \in W^{m_1}$ and $\bar{u}'_{n_1} \in W^{n_1}$  allowing to represent  $(\alpha_2,\ldots, \alpha_k)$ in the form given in Case 1. But then, we must have $(\alpha_{m_1 + 2})^\frown\bar{v}'_{m_1} = \alpha_2 = \bar{w}_{r + 1}$. Now, assume that $m_1 > 0$; then we would have $v'_{m_1} = w_{r + 1}$, and also $\alpha_3 = (\alpha_{m_1 + 2})^\frown\bar{v}'_{m_1 - 1} = \bar{w}_{r} = \alpha_1$, but the latter is in contradiction with our assumption that $\bar{\alpha}_k \in (W^{un}_w)^k$ consists of pairwise distinct tuples. Therefore, we must have $m_1 = 0$, so that  $(\alpha_2,\ldots, \alpha_k)$ in fact must have the form:
\begin{align*}
	\alpha_2 &= \alpha_{m_1 + 2} = \bar{w}_{r + 1}\\
	\alpha_3 &=\alpha_{m_1 + 2} = (\alpha_{m_1 + 2})^\frown u'_1\\
	&\vdots\\
	\alpha_k &= \alpha_{m_1 + n_1 + 2} = (\alpha_{m_1 + 2})^\frown\bar{u}'_{n_1}
\end{align*}
 But then we can set $m: = 0$, $n : = n_1 + 1$, $\bar{v}_m  := \Lambda$, $\bar{u}_{n}:=  (w_{r + 1})^\frown\bar{u}'_{n_1}$; these settings clearly satisfy the Lemma. 
\end{proof}
\begin{lemma}\label{L:prec-relation}
	Let $(\mathcal{M}, w)\in Pmod_\Theta$, let $\alpha, \beta \in (W^{un}_w)$ be such that $\alpha\mathrel{\prec^{un}_w}\beta$. Then there exist $\gamma \in W^{un}_w$, $m, n \geq 0$ and (not necessarily pairwise distinct) $\bar{v}_m \in W^m$ and $\bar{u}_n \in W^n$ such that $v_{m}\mathrel{\prec}\ldots\mathrel{\prec}v_1\mathrel{\prec} end(\gamma)\mathrel{\prec}u_1\mathrel{\prec}\ldots\mathrel{\prec}u_{n}$, and:
	\begin{align*}
		\alpha &= \gamma^\frown\bar{v}_{m}\\
		\beta &=  \gamma^\frown\bar{u}_{n}.
	\end{align*}	
\end{lemma}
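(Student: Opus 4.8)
The plan is to reduce the statement to Lemma \ref{L:rho-relation} by passing from the reflexive transitive closure $\prec^{un}_w$ back to its generating relation $\rho^{un}_w$. By definition $\prec^{un}_w$ is the reflexive and transitive closure of $\rho^{un}_w$, so $\alpha \mathrel{\prec^{un}_w} \beta$ means precisely that there is a finite $\rho^{un}_w$-walk $\alpha = \delta_0 \mathrel{\rho^{un}_w} \delta_1 \mathrel{\rho^{un}_w} \cdots \mathrel{\rho^{un}_w} \delta_l = \beta$ for some $l \geq 0$, with $l = 0$ covering the reflexive case $\alpha = \beta$. The one obstacle to invoking Lemma \ref{L:rho-relation} directly is that such a walk need not consist of pairwise distinct tuples, whereas that lemma requires exactly this; so the crux of the argument is to replace the walk by a \emph{simple} one.

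First I would show that the $\rho^{un}_w$-walk from $\alpha$ to $\beta$ may be taken to have pairwise distinct members, by the standard loop-cutting argument. Among all $\rho^{un}_w$-walks from $\alpha$ to $\beta$ (at least one exists, since $\alpha \mathrel{\prec^{un}_w} \beta$) choose one of minimal length $l$. If some tuple occurred twice, say $\delta_i = \delta_j$ with $i < j$, then splicing out the intervening segment — passing to $\delta_0,\ldots,\delta_i,\delta_{j+1},\ldots,\delta_l$ — yields a strictly shorter $\rho^{un}_w$-walk from $\alpha$ to $\beta$, since consecutiveness is preserved by $\delta_i = \delta_j \mathrel{\rho^{un}_w} \delta_{j+1}$; this contradicts minimality. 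Hence a minimal walk is simple. Writing it as $\alpha = \alpha_1,\ldots,\alpha_k = \beta$ with $k = l + 1 \geq 1$, we obtain a tuple $\bar{\alpha}_k \in (W^{un}_w)^k$ of pairwise distinct members with $\alpha_i \mathrel{\rho^{un}_w} \alpha_{i+1}$ for all $i < k$. The degenerate case $\alpha = \beta$ simply gives $k = 1$, which is legitimate, as the chain condition in Lemma \ref{L:rho-relation} is then vacuous.

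It then remains only to read off the conclusion from Lemma \ref{L:rho-relation} applied to $\bar{\alpha}_k$. The lemma supplies $m, n \geq 0$ together with $\bar{v}_m \in W^m$ and $\bar{u}_n \in W^n$ satisfying $v_m \prec \cdots \prec v_1 \prec end(\alpha_{m+1}) \prec u_1 \prec \cdots \prec u_n$, and such that $\alpha_1 = (\alpha_{m+1})^\frown \bar{v}_m$ (clause (3) at $i = 1$, where $\bar{v}_{m-1+1} = \bar{v}_m$) and $\alpha_k = \alpha_{m+n+1} = (\alpha_{m+1})^\frown \bar{u}_n$ (clause (4) at $i = n$, using $k = m + n + 1$ from clause (2)). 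Setting $\gamma := \alpha_{m+1} \in W^{un}_w$ and recalling $\alpha_1 = \alpha$ and $\alpha_k = \beta$, these become exactly $\alpha = \gamma^\frown \bar{v}_m$, $\beta = \gamma^\frown \bar{u}_n$, together with $v_m \prec \cdots \prec v_1 \prec end(\gamma) \prec u_1 \prec \cdots \prec u_n$, which is the required conclusion. The only genuinely delicate point is thus the reduction to a simple walk, since it is precisely this that licenses the appeal to Lemma \ref{L:rho-relation}; the rest is index bookkeeping.
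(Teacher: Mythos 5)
Your proposal is correct and matches the paper's own proof in all essentials: the paper likewise extracts a $\rho^{un}_w$-walk from the reflexive--transitive closure, makes it simple by cutting loops (there, by replacing each subsequence of the form $\delta^\frown(\bar{\eta}_r)^\frown\delta$ with $\delta$, where your minimal-length argument is an equivalent and if anything cleaner formalization of the same reduction), and then applies Lemma~\ref{L:rho-relation}, setting $\gamma := \alpha_{m+1}$. Your index bookkeeping, including the degenerate case $\alpha = \beta$, is also accurate.
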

\begin{proof}
	Since $\prec^{un}_w$ is just the reflexive and transitive closure of $\rho^{un}_w$, there must be an $l \geq 1$ and $\bar{\alpha}'_l \in (W^{un}_w)^l$ such that $\alpha'_1\mathrel{\rho^{un}_w}\ldots\mathrel{\rho^{un}_w}\alpha'_l$, $\alpha = \alpha'_1$ and $\beta = \alpha'_l$. In case $\bar{\alpha}_l$ consists of tuples that are not pairwise distinct, we can just replace in $\bar{\alpha}'_l$ every sub-sequence of the form $\beta^\frown(\bar{\delta}_r)^\frown\beta$ with $\beta$. The result of these replacements will be some $\bar{\alpha}_k \in (W^{un}_w)^k$ consisting of pairwise distinct tuples, such that $k > 0$, $\alpha_1\mathrel{\rho^{un}_w}\ldots\mathrel{\rho^{un}_w}\alpha_k$, $\alpha = \alpha_1$ and $\beta = \alpha_k$. But then we can apply Lemma \ref{L:rho-relation}, to find $m, n \geq 0$ and (not necessarily pairwise distinct) $\bar{v}_m \in W^m$ and $\bar{u}_n \in W^n$ such that $v_{m}\mathrel{\prec}\ldots\mathrel{\prec}v_1\mathrel{\prec} end(\alpha_{m + 1})\mathrel{\prec}u_1\mathrel{\prec}\ldots\mathrel{\prec}u_{n}$, and we have:
	\begin{align*}
		\alpha_1 &= (\alpha_{m + 1})^\frown(\bar{v}_{m})\\
		&\vdots\\
		\alpha_m &= (\alpha_{m + 1})^\frown v_1\\
		&\alpha_{m + 1}\\
		\alpha_{m + 2} &= (\alpha_{m + 1})^\frown u_1\\
		&\vdots\\
		\alpha_k &= \alpha_{m + n + 1} = (\alpha_{m + 1})^\frown(\bar{u}_{n}).
	\end{align*}
Setting then $\gamma:= \alpha_{m + 1}$, we get our Lemma verified.
\end{proof}

The following two lemmas sum up the basic facts about bi-unravellings:
\begin{lemma}\label{L:unravelling-mod}
	Let $(\mathcal{M}, w) \in Pmod_\Theta$. Then $(\mathcal{M}^{un}_w, w)\in Pmod_\Theta$.
\end{lemma}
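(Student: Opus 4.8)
The plan is to verify directly the defining clauses of a pointed $\Theta$-model: that $W^{un}_w$ is non-empty and contains the designated point, that $\prec^{un}_w$ is a partial order, and that $V^{un}_w$ satisfies the persistence condition. The designated point is the one-element chain $(w)$, which is identified with $w$; it lies in $W^{un}_w$ because the distinctness requirement on adjacent nodes is vacuous for a one-element tuple, so non-emptiness and membership of the point are immediate. Reflexivity and transitivity of $\prec^{un}_w$ come for free, since by definition it is the reflexive transitive closure of $\rho^{un}_w$.

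The crux is antisymmetry, and here I would isolate a single monotonicity claim and invoke it twice. The claim is: whenever $\alpha\mathrel{\prec^{un}_w}\beta$ we have $end(\alpha)\mathrel{\prec}end(\beta)$, and if moreover $end(\alpha) = end(\beta)$ then $\alpha = \beta$. To prove it, apply Lemma \ref{L:prec-relation} to obtain $\gamma$, $\bar{v}_m$, $\bar{u}_n$ with $v_m\mathrel{\prec}\ldots\mathrel{\prec}v_1\mathrel{\prec}end(\gamma)\mathrel{\prec}u_1\mathrel{\prec}\ldots\mathrel{\prec}u_n$, $\alpha = \gamma^\frown\bar{v}_m$ and $\beta = \gamma^\frown\bar{u}_n$. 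Reading off the endpoints and using transitivity of $\prec$ gives $end(\alpha)\mathrel{\prec}end(\gamma)\mathrel{\prec}end(\beta)$, which yields the first assertion. For the second, $end(\alpha) = end(\beta)$ forces $end(\gamma)$ to coincide with this common node; but if $m > 0$ then $end(\alpha) = v_m$, and since $\alpha \in W^{un}_w$ its adjacent entries are distinct, so antisymmetry of $\prec$ applied along $v_m\mathrel{\prec}\ldots\mathrel{\prec}v_1\mathrel{\prec}end(\gamma)$ forces $v_m \neq end(\gamma)$, a contradiction. Hence $m = 0$, and symmetrically $n = 0$, so $\alpha = \gamma = \beta$.

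Given the claim, antisymmetry is immediate: from $\alpha\mathrel{\prec^{un}_w}\beta$ and $\beta\mathrel{\prec^{un}_w}\alpha$ we obtain $end(\alpha)\mathrel{\prec}end(\beta)$ and $end(\beta)\mathrel{\prec}end(\alpha)$, so $end(\alpha) = end(\beta)$ by antisymmetry of $\prec$ in $\mathcal{M}$, and the second half of the claim then gives $\alpha = \beta$. Persistence of $V^{un}_w$ likewise reduces to the first half of the claim: since $\alpha \in V^{un}_w(p)$ means precisely $end(\alpha) \in V(p)$, whenever $\alpha\mathrel{\prec^{un}_w}\beta$ we have $end(\alpha)\mathrel{\prec}end(\beta)$, and persistence of $V$ in $\mathcal{M}$ carries $p$ from $end(\alpha)$ to $end(\beta)$, i.e. $\beta \in V^{un}_w(p)$. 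The only delicate point is the distinctness step inside the claim, which is exactly where membership of $\alpha$ in $W^{un}_w$ (and hence distinctness of adjacent entries) is essential; everything else is bookkeeping over the representation supplied by Lemma \ref{L:prec-relation}.
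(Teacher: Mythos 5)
Your proof is correct, and it routes the crux through a genuinely different decomposition than the paper's. The paper attacks antisymmetry of $\prec^{un}_w$ head-on: from $\alpha\mathrel{\prec^{un}_w}\beta$ and $\beta\mathrel{\prec^{un}_w}\alpha$ it applies Lemma \ref{L:prec-relation} \emph{twice}, obtaining two representations over bases $\gamma$ and $\delta$, then compares the lengths $l_\gamma$ and $l_\delta$, matches suffixes of the two decompositions, and collapses the resulting $\prec$-chains by antisymmetry of $\prec$, ending with $m_0 = n_0 = 1$ and $v_1 = u_1$; monotonicity of $V^{un}_w$ is then handled by a separate, second invocation of Lemma \ref{L:prec-relation}. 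You instead isolate a single endpoint claim --- $\alpha\mathrel{\prec^{un}_w}\beta$ implies $end(\alpha)\mathrel{\prec}end(\beta)$, with rigidity ($\alpha = \beta$) when the endpoints agree --- proved by one application of Lemma \ref{L:prec-relation} plus a cycle-collapse argument against adjacent distinctness. This buys you two things: antisymmetry of $\prec^{un}_w$ reduces to antisymmetry of $\prec$ in the base model applied to endpoints, so the paper's alignment of two simultaneous representations is avoided entirely; and persistence of $V^{un}_w$ falls out of the very same claim, since $\alpha \in V^{un}_w(p)$ just means $end(\alpha) \in V(p)$. (Your $end(\alpha)$ formulation even handles the degenerate case uniformly: the paper's line ``$\alpha \in V^{un}_w(p) \Leftrightarrow v_m \in V(p)$'' is only literally right when $m > 0$.) One presentational point worth unpacking: the step ``antisymmetry of $\prec$ applied along $v_m\mathrel{\prec}\ldots\mathrel{\prec}v_1\mathrel{\prec}end(\gamma)$ forces $v_m \neq end(\gamma)$'' is elliptical --- antisymmetry alone forces nothing here; rather, if $v_m = end(\gamma)$ the chain becomes a cycle, antisymmetry collapses every entry of the cycle to a single world, and it is \emph{that} collapse which contradicts the distinctness of an adjacent pair of $\alpha$ (e.g.\ $end(\gamma) \neq v_1$). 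The ingredients are all on the page, but the reader must assemble them. You also verify explicitly that the one-element chain $(w)$ lies in $W^{un}_w$, which the paper's proof leaves tacit but which is needed for $(\mathcal{M}^{un}_w, w)$ to be a \emph{pointed} model.
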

\begin{proof}
By definition, $\prec^{un}_w$ is reflexive and transitive. As for antisymmetry, assume that, for a given
$\bar{w}_r, \bar{v}_n \in W^{un}_w$, we have
$$
\bar{w}_r\mathrel{\prec^{un}_w}\bar{v}_n \,\&\,
\bar{v}_n\mathrel{\prec^{un}_w}\bar{w}_r.
$$
By Lemma \ref{L:prec-relation}, there must be, on the one hand, some $m_0, n_0 \geq 0$ and $\gamma \in W^{un}_w$  and (not necessarily pairwise distinct) $\bar{v}_{m_0} \in W^{m_0}$ and $\bar{u}_{n_0} \in W^{n_0}$ such that $v_{m_0}\mathrel{\prec}\ldots\mathrel{\prec}v_1\mathrel{\prec}u_1\mathrel{\prec}\ldots\mathrel{\prec}u_{n_0}$, and:
\begin{align*}
	\alpha &= \gamma^\frown\bar{v}_{m_0}\\
	\beta &=  \gamma^\frown\bar{u}_{n_0}.
\end{align*}
On the other hand, again by Lemma \ref{L:prec-relation}, there must be some $m_1, n_1 \geq 0$ and $\delta \in W^{un}_w$ and (not necessarily pairwise distinct) $\bar{s}_{m_1} \in W^{m_1}$ and $\bar{t}_{n_1} \in W^{n_1}$ such that $s_{m_1}\mathrel{\prec}\ldots\mathrel{\prec}s_1\mathrel{\prec}t_1\mathrel{\prec}\ldots\mathrel{\prec}t_{n_1}$, and:
\begin{align*}
	\beta &= \delta^\frown\bar{s}_{m_1}\\
	\alpha &=  \delta^\frown\bar{t}_{n_1}.
\end{align*} 	
We then compare $l_\gamma$ and $l_\delta$, the lengths of $\gamma$ and $\delta$, respectively. Assume, w.l.o.g., that $l_\gamma \geq l_\delta$. We must have then $\gamma^\frown\bar{v}_{m_0} = \alpha = \delta^\frown\bar{t}_{n_1}$, whence it follows that $\gamma = \delta^\frown\bar{t}_{n_1 - m_0}$ and so also $\bar{v}_{m_0} = (t_{n_1 - m_0 + 1},\ldots, t_{n_1})$. But then, notice that we have both $v_{m_0}\mathrel{\prec}\ldots\mathrel{\prec}v_1$ and $t_{n_1 - m_0 + 1}\mathrel{\prec}\ldots\mathrel{\prec}t_{n_1}$. It follows then, by the antisymmetry of $\prec$, that we have:
$$
v_{m_0} = t_{n_1 - m_0 + 1} = \ldots = v_1 = t_{n_1},
$$
so, in particular, we get that $v_1 =\ldots = v_{m_0}$. Next, note we have $\alpha = \gamma^\frown\bar{v}_{m_0} \in W^{un}_w$, which means that no two adjacent elements in $\bar{v}_{m_0}$ may coincide. Hence we must have $m_0 = 1$, and thus also $\alpha = \gamma^\frown v_1$.

By a parallel argument, we get that, on the other hand, we must have $\gamma = \delta^\frown\bar{s}_{m_1 - n_0}$ and also $\bar{u}_{n_0} = (s_{m_1 - n_0 + 1},\ldots, s_{m_1})$. But then, notice that we have both $u_{1}\mathrel{\prec}\ldots\mathrel{\prec}u_{n_0}$ and $s_{m_1}\mathrel{\prec}\ldots\mathrel{\prec}s_{m_1 - n_0 + 1}$, whence it follows, by the antisymmetry of $\prec$, that we have:
$$
u_{1} = s_{m_1} = \ldots = u_{n_0} = s_{m_1 - n_0 + 1},
$$
so, in particular, we get that $u_1 =\ldots = u_{n_0}$. Next, note we have $\beta = \gamma^\frown\bar{u}_{n_0} \in W^{un}_w$, which means that no two adjacent elements in $\bar{v}_{m_0}$ may coincide. Hence we must have $n_0 = 1$, and thus also $\beta = \gamma^\frown u_1$. 

Furthermore, note that, by the choice of $v_1$ and $u_1$, we have $v_1\mathrel{\prec}u_1$. On the other hand, the above chains of equalities, together with the transitivity of $\prec$, also imply that $u_1 = s_{m_1 - n_0 + 1}\mathrel{\prec}t_{n_1} = v_1$, hence also $u_1\mathrel{\prec}v_1$. It follows, next, by the antisymmetry of $\prec$, that $u_1 = v_1$, so that we get that:
$$
\alpha = \gamma^\frown v_1 = \gamma^\frown u_1 = \beta.
$$

To show monotonicity of $V^{un}_w$ w.r.t. $\prec^{un}_w$, assume that
for some $\alpha, \beta \in W^{un}_w$ we have
$\alpha\mathrel{\prec^{un}_w}\beta$. Then we choose, using Lemma \ref{L:prec-relation}, $\gamma \in W^{un}_w$, $m, n \geq 0$ and (not necessarily pairwise distinct) $\bar{v}_m \in W^m$ and $\bar{u}_n \in W^n$ such that $v_{m}\mathrel{\prec}\ldots\mathrel{\prec}v_1\mathrel{\prec}u_1\mathrel{\prec}\ldots\mathrel{\prec}u_{n}$, and:
\begin{align*}
	\alpha &= \gamma^\frown\bar{v}_{m}\\
	\beta &=  \gamma^\frown\bar{u}_{n}.
\end{align*}	
But then, let $p \in \Theta$ be chosen arbitrarily. We reason as follows:
\begin{align*}
	\alpha \in V^{un}_w(p) &\Leftrightarrow v_m \in V(p)\\
	&
	\Rightarrow u_n \in V(p) &&\text{(by transitivity of $\prec$ and monotonicity of $V$)}\\
	&\Leftrightarrow \beta \in V^{un}_w(p) 
\end{align*}	
\end{proof}
\begin{lemma}\label{L:unravelling-asim}
Let $(\mathcal{M}, w)$ be a pointed $\Theta$-model. Then:
\begin{enumerate}
\item The relation $B$ defined as follows:
$$
B := \{(w_n, \bar{w}_n), (\bar{w}_n,w_n) \mid \bar{w}_n \in
W^{un}_w\}.
$$
is a bi-asimulation from $(\mathcal{M}^{un}_w, w)$ to $(\mathcal{M},
w)$ and from $(\mathcal{M},
w)$ to $(\mathcal{M}^{un}_w, w)$;

\item $Th_{BIL}(\mathcal{M}, v_k) = Th_{BIL}(\mathcal{M},
\bar{v}_k)$ for any $\bar{v}_k \in W^{un}_w$;

\item $Th_{BIL}(\mathcal{M}, \bar{u}_n) = Th_{BIL}(\mathcal{M},
\bar{v}_k)$ for any $\bar{u}_n, \bar{v}_k \in W^{un}_w$ whenever
$u_n = v_k$.
\end{enumerate}
\end{lemma}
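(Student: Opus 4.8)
The plan is to prove statement (1) in full and then read off statements (2) and (3) from it together with the known preservation of bi-intuitionistic formulas under bi-asimulations. The guiding observation for (1) is that $B$ simply pairs each tuple $\alpha \in W^{un}_w$ with its last coordinate $end(\alpha) \in W$, and does so in both directions at once. Because this pairing is symmetric, whenever a step is matched by aligning a tuple with its endpoint, the two clauses $t\mathrel{B}u$ and $u\mathrel{B}t$ demanded by \eqref{E:c44} and \eqref{E:c55} hold simultaneously, which eliminates half of the bookkeeping.

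First I would dispose of the three easy conditions. Condition \eqref{E:c22} is immediate from the shape of $B$. Condition \eqref{E:c11} holds because the $1$-tuple $(w)$ is identified with $w$ and $end((w)) = w$, so that $((w),w) \in B$ and $(w,(w)) \in B$; this also covers the base points in either direction, so that $B$ will be a bi-asimulation both from $(\mathcal{M}^{un}_w, w)$ to $(\mathcal{M}, w)$ and the other way round. Condition \eqref{E:c33} is exactly the defining clause $V^{un}_w(p) = \{\bar{u}_n \mid u_n \in V(p)\}$ read in either direction.

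The substance lies in \eqref{E:c44} and \eqref{E:c55}, which I would organize by which model supplies the witness. When the step to be matched is a step in $\mathcal{M}$, the witness is built by hand: from $end(\alpha) \prec t$ with $t \neq end(\alpha)$ one forms $\alpha^\frown t \in W^{un}_w$ and invokes the first disjunct in the definition of $\rho^{un}_w$ to obtain $\alpha \mathrel{\rho^{un}_w} \alpha^\frown t$, hence $\alpha \prec^{un}_w \alpha^\frown t$; the case $t = end(\alpha)$ is absorbed by reflexivity, and the predecessor variant is symmetric, using the second disjunct of $\rho^{un}_w$ (forming $\alpha^\frown u$ from $u \prec end(\alpha)$). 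When instead the step to be matched is a step $\alpha \prec^{un}_w \beta$ in the unravelling, so that a step in $\mathcal{M}$ must be produced, I would apply Lemma \ref{L:prec-relation} to write $\alpha = \gamma^\frown\bar{v}_m$ and $\beta = \gamma^\frown\bar{u}_n$ with $v_m \prec \cdots \prec v_1 \prec end(\gamma) \prec u_1 \prec \cdots \prec u_n$. A short case split on whether $m$ and $n$ vanish, together with transitivity of $\prec$, then yields $end(\alpha) \prec end(\beta)$ (with $end(\alpha) = end(\beta)$ when $m = n = 0$, handled by reflexivity), and $end(\beta)$ is precisely the required witness in $W$. In every case the witness is the endpoint of the relevant tuple, so both symmetric $B$-clauses hold at once. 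This projection of a $\prec^{un}_w$-step down to a $\prec$-step is the main obstacle, and Lemma \ref{L:prec-relation} is exactly the instrument that removes it; since the four quantified clauses are checked for both assignments $\{i,j\} = \{1,2\}$, statement (1) follows.

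For statement (2) I observe that \eqref{E:c22}, \eqref{E:c33}, \eqref{E:c44} and \eqref{E:c55} make no reference to the base points, while $\bar{v}_k \mathrel{B} v_k$ and $v_k \mathrel{B} \bar{v}_k$ hold by definition of $B$; hence $B$ is a bi-asimulation from $(\mathcal{M}^{un}_w, \bar{v}_k)$ to $(\mathcal{M}, v_k)$ and also in the reverse direction. Applying preservation of $BIL$-formulas under bi-asimulations in both directions gives $Th^+_{BIL}(\mathcal{M}^{un}_w, \bar{v}_k) = Th^+_{BIL}(\mathcal{M}, v_k)$, and since the negative part of a theory is the complement of its positive part inside $BIL(\Theta)$, the two full theories coincide. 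Finally, statement (3) is immediate from (2): if $u_n = v_k$, then $Th_{BIL}(\mathcal{M}^{un}_w, \bar{u}_n) = Th_{BIL}(\mathcal{M}, u_n) = Th_{BIL}(\mathcal{M}, v_k) = Th_{BIL}(\mathcal{M}^{un}_w, \bar{v}_k)$.
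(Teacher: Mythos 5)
Your proof is correct and follows essentially the same route as the paper: the three easy conditions are dispatched by inspection, \eqref{E:c44} and \eqref{E:c55} are verified by appending a fresh node $\alpha^\frown t$ (or using reflexivity when $t = end(\alpha)$) in one direction and by projecting a $\prec^{un}_w$-step to $end(\alpha) \prec end(\beta)$ via Lemma \ref{L:prec-relation} in the other, and parts (2)--(3) follow from preservation under bi-asimulations. Your uniform derivation of $end(\alpha)\prec end(\beta)$ across the cases $m,n \geq 0$, and your explicit remark that $B$ remains a bi-asimulation after re-basing at $(\bar{v}_k, v_k)$, are slightly tidier than the paper's wording but do not change the argument.
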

\begin{proof} (Part 1) It is clear that $B$ satisfies conditions \eqref{E:c11}, \eqref{E:c22}, and \eqref{E:c33}; the latter two conditions are even satisfied both from $(\mathcal{M}^{un}_w, w)$ to $(\mathcal{M},
	w)$ and from $(\mathcal{M},
	w)$ to $(\mathcal{M}^{un}_w, w)$. We consider the remaining conditions:
	
\textit{Condition \eqref{E:c44}}. If $\bar{w}_n \in W^{un}_w$ and $w_n\mathrel{\prec}v$, then two cases are possible:

\textit{Case 1}. $w_n = v$. Then we note that we have $\bar{w}_n\mathrel{\prec^{un}_w}\bar{w}_n$, $v\mathrel{B}\bar{w}_n$ and $\bar{w}_n\mathrel{B}v$.

\textit{Case 2}. $w_n \neq v$. Then we must have $(\bar{w}_n)^\frown v \in W^{un}_w$, $\bar{w}_n\mathrel{\prec^{un}_w}(\bar{w}_n)^\frown v$, $v\mathrel{B}(\bar{w}_n)^\frown v$, and $(\bar{w}_n)^\frown v\mathrel{B}v$.

On the other hand, if $\bar{w}_n\mathrel{\prec^{un}_w}\alpha$ for some $\alpha \in W^{un}_w$ then it follows, by Lemma \ref{L:prec-relation}, that we can choose  some $m_0, n_0 \geq 0$ and $\gamma \in W^{un}_w$  and (not necessarily pairwise distinct) $\bar{v}_{m_0} \in W^{m_0}$ and $\bar{u}_{n_0} \in W^{n_0}$ such that $v_{m_0}\mathrel{\prec}\ldots\mathrel{\prec}v_1\mathrel{\prec} end(\gamma)\mathrel{\prec}u_1\mathrel{\prec}\ldots\mathrel{\prec}u_{n_0}$, and:
\begin{align*}
	\bar{w}_n &= \gamma^\frown\bar{v}_{m_0}\\
	\alpha &=  \gamma^\frown\bar{u}_{n_0}.
\end{align*}  
Again, two cases are possible:

\textit{Case 1}. $m_0 + n_0 = 0$. Then $\bar{w}_n=\alpha$; note, further, that we have then $w_n\mathrel{\prec}w_n$, $w_n\mathrel{B}\alpha$ and $\alpha\mathrel{B}w_n$.

\textit{Case 2}. $m_0 + n_0 > 0$. Then (depending on whether $m_0 = 0$ or $m_0 \neq 0$) we must have that $w_n = end(\gamma)$ or $w_n = v_{m_0}$. In both cases it follows, by transitivity of $\prec$, that we have $w_n \prec u_{n_0}$; on the other hand, we have both $u_{n_0}\mathrel{B}\alpha$ and $\alpha\mathrel{B}u_{n_0}$.

\textit{Condition \eqref{E:c55}}. If $\bar{w}_n \in W^{un}_w$ and $w_n\mathrel{\succ}v$, then two cases are possible:

\textit{Case 1}. $w_n = v$. Then we note that we have $\bar{w}_n\mathrel{\prec^{un}_w}\bar{w}_n$, $v\mathrel{B}\bar{w}_n$ and $\bar{w}_n\mathrel{B}v$.

\textit{Case 2}. $w_n \neq v$. Then we must have $(\bar{w}_n)^\frown v \in W^{un}_w$, $(\bar{w}_n)^\frown v\mathrel{\prec^{un}_w}\bar{w}_n$, $v\mathrel{B}(\bar{w}_n)^\frown v$, and $(\bar{w}_n)^\frown v\mathrel{B}v$.

On the other hand, if $\alpha\mathrel{\prec^{un}_w}\bar{w}_n$ for some $\alpha \in W^{un}_w$ then it follows, by Lemma \ref{L:prec-relation}, that we can choose  some $m_0, n_0 \geq 0$ and $\gamma \in W^{un}_w$ such that and (not necessarily pairwise distinct) $\bar{v}_{m_0} \in W^{m_0}$ and $\bar{u}_{n_0} \in W^{n_0}$ such that $v_{m_0}\mathrel{\prec}\ldots\mathrel{\prec}v_1\mathrel{\prec} end(\gamma)\mathrel{\prec}u_1\mathrel{\prec}\ldots\mathrel{\prec}u_{n_0}$, and:
\begin{align*}
	 \alpha &= \gamma^\frown\bar{v}_{m_0}\\
	\bar{w}_n &=  \gamma^\frown\bar{u}_{n_0}.
\end{align*}  
Again, two cases are possible:

\textit{Case 1}. $m_0 + n_0 = 0$. Then $\bar{w}_n=\alpha$; note, further, that we have then $w_n\mathrel{\prec}w_n$, $w_n\mathrel{B}\alpha$ and $\alpha\mathrel{B}w_n$.

\textit{Case 2}. $m_0 + n_0 > 0$. Then (depending on whether $n_0 = 0$ or $n_0 \neq 0$) we must have that $w_n = end(\gamma)$ or $w_n = u_{n_0}$. In both cases it follows, by transitivity of $\prec$, that we have $w_n \succ v_{m_0}$; on the other hand, we have both $v_{m_0}\mathrel{B}\alpha$ and $\alpha\mathrel{B}v_{m_0}$.

Parts $2$ and $3$ then follow from preservation of bi-intuitionistic formulas under bi-asimulations.
\end{proof}
More generally, we will call an arbitrary $(\mathcal{M}, w) \in Pmod_\Theta$ a bi-unravelled model iff there is an $(\mathcal{N}, v) \in Pmod_\Theta$ and a bijection $g: W \to U^{un(v)}$ satisfying the condition \eqref{E:ic1} of Definition \ref{D:isomorphism} such that $g(v) =w$; or, in other words, if the underlying frame $(W, \prec)$ of $(\mathcal{M}, w)$ is isomorphic to the underlying frame of $(\mathcal{N}^{un(v)}, v)$. It is clear that if $\mathcal{M}$ is a bi-unravelled model, we may always assume that its underlying frame is just a copy of the underlying frame of the respective bi-unravelled model. We will assume, therefore, for the sake of simplicity, that if $(\mathcal{M}, w) \in Pmod_\Theta$ is a bi-unravelled model, then $\mathcal{M}$ is given in the form $\langle W^{un(w)}, \prec^{un(w)}, V\rangle$.

\section{Abstract bi-intuitionistic logics}\label{S:Abstract}
An abstract bi-intuitionistic logic $\mathcal{L}$ is a pair $(L,
\models_\mathcal{L})$, where $L$ maps every signature $\Theta$ to
the set $L(\Theta)$ of $\Theta$-formulas of $\mathcal{L}$ and
$\models_\mathcal{L}$ is a class-relation such that, if
$\alpha\mathrel{\models_\mathcal{L}}\beta$, then there exists a
signature $\Theta$ such that $\alpha \in Pmod_\Theta$,
and $\beta\in L(\Theta)$; informally this is to mean that $\beta$
holds in $\alpha$. The relation $\models_\mathcal{L}$ is only assumed to be defined (i.e. to either hold or fail) for the elements of the class $\bigcup\{Pmod_\Theta\times L(\Theta)\mid \Theta\text{ is a signature}\}$ and to be undefined otherwise.

In order for such a pair $\mathcal{L}$ to count as an abstract
bi-intuitionistic logic, we demand that the following conditions are
satisfied:
\begin{itemize}
\item $\Theta \subseteq \Theta' \Rightarrow L(\Theta) \subseteq
L(\Theta')$.

\item (Isomorphism). If $(\mathcal{M}, w)\in Pmod_\Theta$ and $\mathcal{N} \in Mod_\Theta$,
$\phi \in L(\Theta)$ and $f:\mathcal{M} \cong \mathcal{N}$, then:
$$
\mathcal{M}, w \models_\mathcal{L} \phi \Leftrightarrow
\mathcal{N}, f(w) \models_\mathcal{L} \phi.
$$

\item (Expansion). If $\Theta$ is a signature, $\phi \in
L(\Theta)$, $\Theta \subseteq \Theta'$, and $\mathcal{M}$ is a
$\Theta'$-model, then:
$$
\mathcal{M}, w \models_\mathcal{L} \phi \Leftrightarrow
\mathcal{M}\upharpoonright\Theta, w \models_\mathcal{L} \phi.
$$

\item (Occurrence). If $\phi \in L(\Theta)$ for some signature ,
then there is a finite $\Theta_\phi \subseteq \Theta$ such that
for every $\Theta'$-model $\mathcal{M}$, the relation
$\mathcal{M}\models_\mathcal{L}\phi$ is defined iff $\Theta_\phi
\subseteq \Theta'$.

\item (Closure). For every signature $\Theta$ and all $\phi, \psi
\in L(\Theta)$, we have \[
\bot, \phi \ll \psi, \phi \to \psi, \phi \wedge \psi, \phi
\vee \psi \in L(\Theta),
\]
 that is to say, $\mathcal{L}$ always includes falsum and is closed
under intuitionistic implication, co-implication, conjunction and disjunction.
\end{itemize}

We further define that given a pair of abstract intuitionistic
logics $\mathcal{L}$ and $\mathcal{L}'$, we say that
$\mathcal{L}'$ extends $\mathcal{L}$ and write $\mathcal{L}
\sqsubseteq \mathcal{L}'$ when for all signatures $\Theta$
and $\phi\in L(\Theta)$ there exists a $\psi\in L'(\Theta)$ such
that for arbitrary pointed $\Theta$-model $(\mathcal{M}, w)$ it is
true that:
$$
\mathcal{M}, w\models_\mathcal{L}\phi \Leftrightarrow \mathcal{M},
w\models_{\mathcal{L}'}\psi.
$$
If both $\mathcal{L} \sqsubseteq \mathcal{L}'$ and
$\mathcal{L}' \sqsubseteq \mathcal{L}$ holds, then we say that
the logics $\mathcal{L}$ and $\mathcal{L}'$ are \emph{expressively
equivalent} and write $\mathcal{L} \equiv \mathcal{L}'$.

It is easy to see that bi-intuitionistic propositional logic itself
turns out to be an abstract bi-intuitionistic logic $\mathsf{BIL} =
(BIL, \models_{BIL})$ under this definition. It is also obvious that
the above definitions and conventions about bi-intuitionistic
theories can be carried over to an arbitrary abstract
bi-intuitionistic logic $\mathcal{L}$ replacing everywhere $BIL$ with
$\mathcal{L}$, including such notions as $\mathcal{L}$-elementary submodel,
$\mathcal{L}$-elementary embedding, $\mathcal{L}$-type, $\mathcal{L}$-saturation of a model, etc.

In this paper, our specific interest is in the extensions of
$\mathsf{BIL}$. Since every abstract intuitionistic logic
$\mathcal{L}$ extending $\mathsf{BIL}$ must have an equivalent for
every bi-intuitionistic propositional formula, we will just assume
that for every signature $\Theta$ we have $BIL(\Theta) \subseteq
L(\Theta)$ and that for every $\varphi \in BIL(\Theta)$ and every
pointed $\Theta$-model $(\mathcal{M}, w)$ we have that:
$$
\mathcal{M}, w\models_\mathcal{L}\varphi \Leftrightarrow
\mathcal{M}, w\models_{BIL}\varphi,
$$
so that all the bi-intuitionistic
propositional formulas are present in $\mathcal{L}$ in their usual
form and with their usual meaning, and whatever other formulas
that $\mathcal{L}$ may contain are distinct from the elements of
$BIL(\Theta)$.

We can immediately state the following corollary to Lemma
\ref{L:asimulations} for arbitrary extensions of $\mathsf{BIL}$:

\begin{corollary}\label{L:asimulationscorollary}
Let $\mathsf{BIL} \sqsubseteq \mathcal{L}$, and let
$(\mathcal{M}_1, w_1)$, $(\mathcal{M}_2, w_2)$ be two pointed $\Theta$-models. If
$Th^+_{BIL}(\mathcal{M}_1, w_1) \subseteq Th^+_{BIL}(\mathcal{M}_2,
w_2)$ and both $\mathcal{M}_1$ and $\mathcal{M}_2$ are
$\mathcal{L}$-saturated, then the relation $A$ such that for all
$u \in W_i$, $s \in W_j$ if $\{ i,j \} = \{ 1,2 \}$, then
$$
u\mathrel{A}s \Leftrightarrow (Th^+_{BIL}(\mathcal{M}_i, u)
\subseteq Th^+_{BIL}(\mathcal{M}_j, s))
$$
is a bi-asimulation from $(\mathcal{M}_1, w_1)$ to $(\mathcal{M}_2,
w_2)$.
\end{corollary}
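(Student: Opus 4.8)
The plan is to reduce the statement directly to Lemma~\ref{L:asimulations}. The key observation is that the relation $A$ defined in this corollary is \emph{literally} the relation $A$ of Lemma~\ref{L:asimulations}: both are given by inclusion of positive $BIL$-theories, $u\mathrel{A}s \Leftrightarrow Th^+_{BIL}(\mathcal{M}_i,u)\subseteq Th^+_{BIL}(\mathcal{M}_j,s)$. The two statements differ only in their saturation hypothesis, the corollary assuming $\mathcal{L}$-saturation where the lemma assumes $BIL$-saturation. Hence the whole task reduces to proving that, when $\mathsf{BIL}\sqsubseteq\mathcal{L}$, every $\mathcal{L}$-saturated model is $BIL$-saturated; once this is in hand the conclusion follows by invoking the lemma, with none of the bi-asimulation clauses \eqref{E:c22}--\eqref{E:c55} needing to be re-checked.

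To establish $\mathcal{L}$-saturated $\Rightarrow$ $BIL$-saturated, I would exploit that $\mathcal{L}$ extends $\mathsf{BIL}$: by the standing convention we have $BIL(\Theta)\subseteq L(\Theta)$, and $\models_\mathcal{L}$ agrees with $\models_{BIL}$ on every formula of $BIL(\Theta)$. Therefore, for any world $v$ and any finite $\Gamma',\Delta'\subseteq BIL(\Theta)$, the condition $(\Gamma',\Delta')\subseteq Th_{BIL}(\mathcal{M},v)$ holds iff $(\Gamma',\Delta')\subseteq Th_\mathcal{L}(\mathcal{M},v)$. Reading this off against Definition~\ref{D:types}, it follows that any $BIL$-type of a model $\mathcal{M}\in\{\mathcal{M}_1,\mathcal{M}_2\}$ is automatically an $\mathcal{L}$-type of $\mathcal{M}$; that is, the $BIL$-types form a subclass of the $\mathcal{L}$-types. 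An $\mathcal{L}$-saturated model realizes all of the latter, so it realizes all of the former, and because the two satisfaction relations coincide on $BIL$-formulas, $\mathcal{L}$-realization of a $BIL$-type is the same as its $BIL$-realization. Hence $\mathcal{M}_1$ and $\mathcal{M}_2$ are $BIL$-saturated.

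I do not anticipate a genuine obstacle; this is the easy ``a larger logic has more types, hence a stronger saturation requirement'' comparison. The only point demanding care is bookkeeping: keeping the implication in the correct direction ($\mathcal{L}$-saturation is the stronger property and it is precisely what we are handed), and remembering that saturation realizes a type inside the model itself (the case $\mathcal{N}:=\mathcal{M}$ of Definition~\ref{D:types-realization}), so that no comparison of $\mathcal{L}$- versus $BIL$-elementary extensions is needed. With $BIL$-saturation of both models secured, Lemma~\ref{L:asimulations} applies unchanged and yields that $A$ is a bi-asimulation from $(\mathcal{M}_1,w_1)$ to $(\mathcal{M}_2,w_2)$.
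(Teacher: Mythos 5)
Your proposal is correct and matches the paper's own argument, which disposes of the corollary in one line by observing that every $\mathcal{L}$-saturated model is $BIL$-saturated (since $BIL(\Theta)\subseteq L(\Theta)$ and $\models_\mathcal{L}$ agrees with $\models_{BIL}$ on $BIL$-formulas, so $BIL$-types are $\mathcal{L}$-types) and then falling back on Lemma~\ref{L:asimulations}. The only cosmetic difference is that the paper says ``repeat the proof'' of the lemma where you invoke its statement directly, and you spell out the saturation comparison that the paper dismisses with ``of course''; both are the same reduction.
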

To prove this, we just repeat the proof of Lemma
\ref{L:asimulations} using the fact that every
$\mathcal{L}$-saturated model is of course $BIL$-saturated.

Some of the extensions of $\mathsf{BIL}$ turn out to be better than
others in that they have useful model-theoretic properties. We
define some of the relevant properties below.
\begin{definition}\label{D:properties}
Let $\mathcal{L} = (L, \models_\mathcal{L})$ be an abstract
bi-intuitionistic logic. Then:
\begin{itemize}
\item $\mathcal{L}$ is \textbf{preserved under bi-asimulations}, iff
for every signature $\Theta$ and for arbitrary pointed
$\Theta$-models $(\mathcal{M}_1, w_1)$ and $(\mathcal{M}_2, w_2)$,
whenever $A$ is a bi-asimulation from $(\mathcal{M}_1, w_1)$ to
$(\mathcal{M}_2, w_2)$, then the inclusion
$Th^+_\mathcal{L}(\mathcal{M}_1, w_1) \subseteq
Th^+_\mathcal{L}(\mathcal{M}_2, w_2)$ holds.

\item $\mathcal{L}$ is \textbf{$\star$-compact}, iff an arbitrary
$L(\Theta)$-theory  $(\Gamma, \Delta)$ is
$\mathcal{L}$-satisfiable, whenever for every finite $\Gamma'
\subseteq \Gamma$ and $\Delta' \subseteq \Delta$, the theory
$(\Gamma', \Delta')$ is $\mathcal{L}$-satisfiable.

\item $\mathcal{L}$ has \textbf{Tarski Union Property (TUP)} iff
for every $\mathcal{L}$-elementary chain

$$
\mathcal{M}_0 \preccurlyeq_\mathcal{L},\ldots,
\preccurlyeq_\mathcal{L} \mathcal{M}_n
\preccurlyeq_\mathcal{L},\ldots
$$
it is true that:
$$
\mathcal{M}_n \preccurlyeq_\mathcal{L} \bigcup_{n \in
\omega}\mathcal{M}_n
$$

for all $n \in \omega$.
\end{itemize}
\end{definition}
If an abstract bi-intuitionistic logic is preserved under
bi-asimulations, then every formula of this logic is monotonic w.r.t.
accessibility relation. More precisely, the following lemma holds:
\begin{lemma}\label{L:monotonicity}
If $\mathcal{L}$ is an abstract bi-intuitionistic logic that is
preserved under bi-asimulations, then for every pointed
$\Theta$-model $(\mathcal{M}, w)$, for every $v \in W$ such that
$w\mathrel{\prec}v$, and for every $\phi \in L(\Theta)$ it is true
that:
$$
\mathcal{M}, w \models_\mathcal{L} \phi \Rightarrow \mathcal{M}, v
\models_\mathcal{L} \phi.
$$
\end{lemma}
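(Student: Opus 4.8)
The plan is to reduce monotonicity to the preservation hypothesis by exhibiting, for each pair $w \prec v$ in $W$, a single bi-asimulation from $(\mathcal{M}, w)$ to $(\mathcal{M}, v)$. Once such a relation $A$ is in hand, the assumption that $\mathcal{L}$ is preserved under bi-asimulations yields $Th^+_\mathcal{L}(\mathcal{M}, w) \subseteq Th^+_\mathcal{L}(\mathcal{M}, v)$, and this inclusion is literally the assertion that every $\phi \in L(\Theta)$ holding at $w$ also holds at $v$. So the entire lemma rests on producing one bi-asimulation, and no induction on the structure of $\phi$ is needed.

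My candidate is the accessibility relation itself. I take $\mathcal{M}_1 = \mathcal{M}_2 = \mathcal{M}$ (so that $W_1 = W_2 = W$ and $R_1 = R_2 = \mathord{\prec}$), point the two copies at $w_1 = w$ and $w_2 = v$, and set $A := \mathord{\prec} = \{(x,y) \in W \times W : x \prec y\}$. I would then check the five clauses of Definition \ref{D:asimulation} in turn. Clause \eqref{E:c22} is automatic since $W_1 = W_2 = W$; clause \eqref{E:c11} is exactly the hypothesis $w \prec v$; and clause \eqref{E:c33} is nothing but the monotonicity of the valuation $V$ built into the definition of a $\Theta$-model, since $x \prec y$ together with $x \in V(p)$ forces $y \in V(p)$.

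The two clauses that require a moment's attention are the back clause \eqref{E:c44} and the forth clause \eqref{E:c55}, since each demands a witness $u$ (resp. $t$) that is \emph{bi-related} to the given point, i.e. satisfying $t\mathrel{A}u \wedge u\mathrel{A}t$. The key observation is that one may always take the witness to be the given point itself. For \eqref{E:c44}, from $v\mathrel{A}s$ (i.e. $v \prec s$) and $s \prec t$ I choose $u := t$: transitivity gives $v \prec s \prec t$, hence $v \prec u$, while $t\mathrel{A}t$ holds twice by reflexivity of $\prec$. Symmetrically, for \eqref{E:c55}, from $v\mathrel{A}s$ and $u \prec v$ I choose $t := u$: transitivity gives $u \prec v \prec s$, hence $t \prec s$, while $u\mathrel{A}u$ holds twice by reflexivity. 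Because the two models coincide, the same computation covers both index assignments $\{i,j\} = \{1,2\}$. The only genuine pitfall is reading the back/forth clauses correctly once the two models are identified and recognizing that reflexivity and transitivity of the partial order $\prec$, plus monotonicity of $V$, are precisely what the clauses ask for when the witness is taken to be the point already on hand; beyond this there is no substantial obstacle.
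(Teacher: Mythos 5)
Your proof is correct and takes essentially the same route as the paper: both arguments reduce the lemma to exhibiting a single concrete bi-asimulation from $(\mathcal{M}, w)$ to $(\mathcal{M}, v)$ on the one model and then invoke preservation, verifying the clauses \eqref{E:c44} and \eqref{E:c55} by taking the point already at hand as its own witness, which works precisely because of reflexivity and transitivity of $\prec$ and monotonicity of $V$. The only cosmetic difference is the choice of relation---the paper uses the smaller $A = \{(w,v)\} \cup id_W$ where you use all of $\mathord{\prec}$ (which contains it, since $id_W \subseteq \mathord{\prec}$ by reflexivity)---and both choices succeed for the same reasons.
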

\begin{proof}
We define a bi-asimulation $A$ from $(\mathcal{M}, w)$ to
$(\mathcal{M}, v)$ setting:
$$
A:= \{ (w, v) \} \cup id_W.
$$
We show that $A$ is indeed a bi-asimulation. Clearly, we only need to check conditions \eqref{E:c44} and \eqref{E:c55}. Moreover, if $s\mathrel{A}t$ are such that $s = t$, then even the latter two conditions are clearly satisfied. There remains the case when $s = w$ and $t = v$. In this case, if $v\mathrel{\prec}u$, then also $w\mathrel{\prec}u$ by transitivity and we can also choose $u$ as a successor for $w$. Symmetrically, if $u\mathrel{\prec}w$, then, by transitivity, also $u\mathrel{\prec}v$; thus we can also choose $u$ as a predecessor for $v$.

The lemma then follows by the preservation of $\mathcal{L}$ under bi-asimulations.
\end{proof}
Note that for Lemma \ref{L:monotonicity} one does not even need to
assume that $\mathsf{BIL} \sqsubseteq \mathcal{L}$.
The next lemma sums up some well-known facts about
$\mathsf{BIL}$:
\begin{lemma}\label{L:properties}
$\mathsf{BIL}$ is preserved under bi-asimulations, $\star$-compact and
has TUP.
\end{lemma}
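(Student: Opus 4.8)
The plan is to verify each of the three properties in turn, drawing on results already established in the excerpt rather than reasoning from scratch. The statement bundles together three claims about $\mathsf{BIL}$: preservation under bi-asimulations, $\star$-compactness, and the Tarski Union Property. Since these are described as ``well-known facts'', the proof should be relatively short, reducing each property either to an earlier lemma or to a standard induction on formula complexity.

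For preservation under bi-asimulations, I would proceed by induction on the complexity of a formula $\varphi \in BIL(\Theta)$, proving that if $A$ is a bi-asimulation from $(\mathcal{M}_1, w_1)$ to $(\mathcal{M}_2, w_2)$ and $v \mathrel{A} s$, then $\mathcal{M}_i, v \models_{BIL} \varphi$ implies $\mathcal{M}_j, s \models_{BIL} \varphi$ (for $\{i,j\} = \{1,2\}$). The atomic case is exactly condition \eqref{E:c33}; conjunction and disjunction are routine. The two modal-like cases are the crux: for $\varphi \to \psi$ one uses the \eqref{E:c44} clause (\textup{s-back}), since a witnessing $\prec_j$-successor $t$ of $s$ must, by \eqref{E:c44}, be matched by a $\prec_i$-successor $u$ of $v$ with $t \mathrel{A} u$ and $u \mathrel{A} t$, allowing the induction hypothesis to transfer the antecedent/consequent in the required direction. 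Dually, for $\varphi \ll \psi$ one invokes \eqref{E:c55} (\textup{s-forth}) to find a matching predecessor. The presence of the symmetric clauses $t \mathrel{A} u$ and $u \mathrel{A} t$ is precisely what makes the induction go through for a two-sided relation, and this bidirectionality is the one point demanding care. Applying this at the pair $w_1 \mathrel{A} w_2$ (guaranteed by \eqref{E:c11}) yields $Th^+_{BIL}(\mathcal{M}_1, w_1) \subseteq Th^+_{BIL}(\mathcal{M}_2, w_2)$.

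For $\star$-compactness, I would appeal to the standard first-order compactness theorem via the well-known standard translation of $BIL$ into classical first-order logic (the same translation underlying the characterization cited after Definition \ref{D:asimulation}). A $BIL(\Theta)$-theory $(\Gamma, \Delta)$ is $BIL$-satisfiable exactly when the first-order theory asserting, at a distinguished point, the truth of each $\varphi \in \Gamma$ and the falsity of each $\psi \in \Delta$ (over the class of posets with monotone valuations) is satisfiable; finite satisfiability of every $(\Gamma', \Delta')$ corresponds to finite satisfiability of this first-order theory, so classical compactness delivers a model. Here one must be slightly attentive to the fact that the relevant frame conditions (partial order, monotonicity of $V$) are themselves first-order axiomatizable, so they can be conjoined to the theory without disturbing compactness.

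For the Tarski Union Property, I would argue that along a $BIL$-elementary chain $\mathcal{M}_0 \preccurlyeq_{BIL} \mathcal{M}_1 \preccurlyeq_{BIL} \cdots$, each $\mathcal{M}_n$ is a $BIL$-elementary submodel of the union $\bigcup_{k \in \omega} \mathcal{M}_k$, again via an induction on formula complexity showing $Th_{BIL}(\mathcal{M}_n, w) = Th_{BIL}(\bigcup_k \mathcal{M}_k, w)$ for each $w \in W_n$. The atomic and Boolean cases are immediate from how $V$ and the relation are defined on the union. For $\to$ and $\ll$, the key observation is that any witness (a successor for $\to$, a predecessor for $\ll$) appearing in the union already lives in some $\mathcal{M}_k$ with $k \geq n$, and since the chain is $BIL$-elementary and directed, the satisfaction of the relevant subformula is stable as one passes up the chain; the reflexivity and transitivity of $\prec$ together with monotonicity keep the witnessing bounded within finitely many stages. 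I expect this last property to be the most delicate of the three, precisely because the co-implication $\ll$ looks backward: one must confirm that a predecessor witness in the union does not require going outside any single member of the chain, which follows because the union of the accessibility relations introduces no new predecessors to an already-present node beyond those already contributed by the members. Assembling the three parts completes the proof.
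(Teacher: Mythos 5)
Your proposal is correct and follows essentially the same route as the paper: the paper likewise obtains $\star$-compactness via the standard translation into classical first-order logic and proves TUP by induction on formula complexity with exactly your treatment of the $\to$ and $\ll$ cases (witness lands in some $\mathcal{M}_k$ with $k \geq n$, then transitivity of $\preccurlyeq_{BIL}$ transfers the result back to $\mathcal{M}_n$). The only cosmetic difference is that for preservation under bi-asimulations the paper simply cites the main result of the earlier work of Badia, whereas you sketch the underlying induction (correctly pairing \eqref{E:c44} with $\to$ and \eqref{E:c55} with $\ll$, and using the symmetric clauses $t\mathrel{A}u \wedge u\mathrel{A}t$); that is just the standard proof behind the citation.
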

\begin{proof}
Invariance under bi-asimulations follows from the main result of
\cite{ba1}. The $\star$-compactness of $\mathsf{BIL}$ is follows from the fact that it is representable as a fragment of classical first-order logic by the appropriate standard translation (also in \cite{ba1}). The proof of TUP runs along the lines of standard proofs of TUP; we briefly sketch it here.

We need to show that if $\varphi \in BIL(\Theta)$, $n \in \omega$
and $w \in W_n$, then
$$
\mathcal{M}_n, w \models_{BIL} \varphi \Leftrightarrow \bigcup_{n
\in \omega}\mathcal{M}_n , w \models_{BIL} \varphi;
$$
this is done by induction on the construction of $\varphi$, and the only non-trivial
cases are is when $\varphi = \psi \to \chi$ and when $\varphi = \psi \ll \chi$. 

\textit{Case 1}. Assume that  $\varphi = \psi \to \chi$. If $\mathcal{M}_n, w
\not\models_{BIL} \psi \to \chi$, then there is a $v \in W_n$ such
that $w\mathrel{\prec_n}v$ and $\mathcal{M}_n, v \models_{BIL}
(\{\psi\}, \{\chi\})$. But then, by induction hypothesis we must
have $\bigcup_{n \in \omega}\mathcal{M}_n, v \models_{BIL}
(\{\psi\}, \{\chi\})$ and we also have $w\mathrel{(\bigcup_{n \in
\omega}\prec_n)}v$ so that $\bigcup_{n \in \omega}\mathcal{M}_n, w
\not\models_{BIL} \psi \to \chi$. In the other direction, assume
that $\bigcup_{n \in \omega}\mathcal{M}_n, w \not\models_{BIL} \psi
\to \chi$. Then, for some $v \in \bigcup_{n \in \omega}W_n$ such
that $w\mathrel{(\bigcup_{n \in \omega}\prec_n})v$ it is true that
$\bigcup_{n \in \omega}\mathcal{M}_n, v \models_{BIL} (\{\psi\},
\{\chi\})$. But then, for some $k \geq n$, we must have both $w,v
\in W_k$ and $w\mathrel{\prec_k}v$, so that we get $\mathcal{M}_k, w
\not\models_{BIL} \psi \to \chi$. By obvious transitivity of
$\preccurlyeq_{BIL}$ we get then that $\mathcal{M}_n
\preccurlyeq_{BIL} \mathcal{M}_k$, whence $\mathcal{M}_n, w
\not\models_{BIL} \psi \to \chi$.

\textit{Case 2}. Assume that  $\varphi = \psi \ll \chi$. If $\mathcal{M}_n, w
\models_{BIL} \psi \ll \chi$, then there is a $v \in W_n$ such
that $v\mathrel{\prec_n}w$ and $\mathcal{M}_n, v \models_{BIL}
(\{\psi\}, \{\chi\})$. But then, by induction hypothesis we must
have $\bigcup_{n \in \omega}\mathcal{M}_n, v \models_{BIL}
(\{\psi\}, \{\chi\})$ and we also have $v\mathrel{(\bigcup_{n \in
		\omega}\prec_n)}w$ so that $\bigcup_{n \in \omega}\mathcal{M}_n, w
\models_{BIL} \psi \ll \chi$. In the other direction, assume
that $\bigcup_{n \in \omega}\mathcal{M}_n, w \models_{BIL} \psi
\ll \chi$. Then, for some $v \in \bigcup_{n \in \omega}W_n$ such
that $v\mathrel{(\bigcup_{n \in \omega}\prec_n})w$ it is true that
$\bigcup_{n \in \omega}\mathcal{M}_n, v \models_{BIL} (\{\psi\},
\{\chi\})$. But then, for some $k \geq n$, we must have both $w,v
\in W_k$ and $v\mathrel{\prec_k}w$, so that we get $\mathcal{M}_k, w
\models_{BIL} \psi \ll \chi$. By obvious transitivity of
$\preccurlyeq_{BIL}$ we get then that $\mathcal{M}_n
\preccurlyeq_{BIL} \mathcal{M}_k$, whence $\mathcal{M}_n, w
\models_{BIL} \psi \ll \chi$.
\end{proof}
The abstract bi-intuitionistic logics extending $\mathsf{BIL}$ and displaying the combination of all three useful model-theoretic properties mentioned in Lemma \ref{L:properties}, indeed, display a very regular behavior. In particular, one easily checks that an obvious analogue of every Lemma and Corollary established in Sections \ref{S:Prel}--\ref{S:unravel} above can be established for any such logic without changing anything essential in the proofs.

Our main theorem is then that no proper extension of $\mathsf{BIL}$
displays the combination of useful properties established in Lemma
\ref{L:properties}. In other words, we are going to establish the
following:
\begin{theorem}\label{L:main}
Let $\mathcal{L}$ be an abstract bi-intuitionistic logic. If
$\mathsf{BIL} \sqsubseteq \mathcal{L}$ and $\mathcal{L}$ is
preserved under bi-asimulations, $\star$-compact, and has the TUP,
then $\mathsf{BIL} \equiv \mathcal{L}$.
\end{theorem}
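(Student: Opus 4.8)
The plan is to show that every formula $\phi \in L(\Theta)$ of the extending logic $\mathcal{L}$ is expressively equivalent to some bi-intuitionistic formula, i.e. there is a $\psi \in BIL(\Theta)$ such that for all pointed $\Theta$-models $(\mathcal{M}, w)$ we have $\mathcal{M}, w \models_\mathcal{L} \phi \Leftrightarrow \mathcal{M}, w \models_{BIL} \psi$. Since $\mathsf{BIL} \sqsubseteq \mathcal{L}$ already gives us the other inclusion, establishing this will yield $\mathsf{BIL} \equiv \mathcal{L}$. By the Occurrence property we may fix a finite signature $\Theta_\phi$ on which $\phi$ depends, so without loss of generality $\Theta$ is finite. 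The key reduction is the following: I would show that if $(\mathcal{M}_1, w_1)$ and $(\mathcal{M}_2, w_2)$ are pointed $\Theta$-models with $Th^+_{BIL}(\mathcal{M}_1, w_1) \subseteq Th^+_{BIL}(\mathcal{M}_2, w_2)$, then $\mathcal{M}_1, w_1 \models_\mathcal{L} \phi$ implies $\mathcal{M}_2, w_2 \models_\mathcal{L} \phi$; equivalently, $\phi$ is invariant under the inclusion of positive $BIL$-theories. Once this is established, a standard compactness argument shows $\phi$ is equivalent to a single $BIL$-formula.

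The heart of the argument is a saturation step combined with Corollary \ref{L:asimulationscorollary}. Given the two models above, I would use $\star$-compactness and the TUP to pass to $\mathcal{L}$-saturated $\mathcal{L}$-elementary extensions $\mathcal{M}_1^* \succcurlyeq_\mathcal{L} \mathcal{M}_1$ and $\mathcal{M}_2^* \succcurlyeq_\mathcal{L} \mathcal{M}_2$, following the usual construction of saturated models via a union of an elementary chain in which types are successively realized. The TUP is exactly what guarantees that the limit of such a chain remains an $\mathcal{L}$-elementary extension (so $\phi$ and all $BIL$-theories are preserved along the way), while $\star$-compactness guarantees that the types we wish to realize are in fact satisfiable at each stage. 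Because $\mathcal{L}$-elementary extensions preserve $BIL$-theories, the inclusion $Th^+_{BIL}(\mathcal{M}_1^*, w_1) \subseteq Th^+_{BIL}(\mathcal{M}_2^*, w_2)$ still holds, and both models are now $\mathcal{L}$-saturated, hence $BIL$-saturated. Corollary \ref{L:asimulationscorollary} then supplies a bi-asimulation $A$ from $(\mathcal{M}_1^*, w_1)$ to $(\mathcal{M}_2^*, w_2)$, and since $\mathcal{L}$ is preserved under bi-asimulations we conclude $\mathcal{M}_1^*, w_1 \models_\mathcal{L} \phi \Rightarrow \mathcal{M}_2^*, w_2 \models_\mathcal{L} \phi$; using $\mathcal{L}$-elementarity at both ends transfers this back to $\mathcal{M}_1$ and $\mathcal{M}_2$.

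To finish, I would run the compactness extraction. Consider the $BIL(\Theta)$-theory $Th^+_{BIL}(\mathcal{M}, w)$ together with the datum $\phi$; the invariance just proved says that whether $\phi$ holds is determined by which positive $BIL$-formulas hold at $(\mathcal{M}, w)$. Concretely, I would argue that the set $\Phi$ of models satisfying $\phi$ is closed under the $Th^+_{BIL}$-inclusion preorder, and then use $\star$-compactness to show that $\phi$ follows from finitely many positive $BIL$-formulas. The standard move is to suppose, for contradiction, that no finite conjunction of $BIL$-consequences of $\phi$ entails $\phi$; this produces, via $\star$-compactness, a model where all these $BIL$-formulas hold but $\phi$ fails, contradicting the invariance, so a single $BIL$-formula $\psi$ (a finite conjunction) does the job.

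The main obstacle I expect is the construction of $\mathcal{L}$-saturated $\mathcal{L}$-elementary extensions in the abstract setting, and in particular verifying that the types being realized can be handled correctly in the presence of the backwards-looking co-implication $\ll$. In classical Lindstr\"om-style arguments one realizes successor types along a forward-looking chain; here one must simultaneously realize both successor and predecessor $BIL$-types (Definition \ref{D:types}), and the bi-unravelling structure analyzed in Lemmas \ref{L:rho-relation}--\ref{L:unravelling-asim} is what makes it possible to keep these two directions coherent. Getting the saturation bookkeeping right — ensuring every type of every node is eventually realized while the TUP keeps the chain $\mathcal{L}$-elementary — is the delicate part, and the footnote's warning that the type-encoding theory cannot be given by finitely many schemes but requires an inductive definition signals precisely where the extra care over the intuitionistic case in \cite{baok} is needed.
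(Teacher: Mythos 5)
Your proposal is correct and is essentially the paper's own proof read contrapositively: your central step --- invariance of $\phi$ under inclusion of positive $BIL$-theories, obtained by saturating both models and applying Corollary \ref{L:asimulationscorollary} together with preservation under bi-asimulations --- is exactly the paper's concluding argument, and your compactness extraction is precisely the content of Proposition \ref{L:proposition1} (the paper extracts a disjunction of finite conjunctions, you a finite conjunction of $BIL$-consequences of $\phi$; both are routine uses of $\star$-compactness). One small correction: the saturated models cannot in general be taken as $\mathcal{L}$-elementary extensions $\mathcal{M}_i^{*} \succcurlyeq_\mathcal{L} \mathcal{M}_i$ of the given models, since the type-realization construction (Proposition \ref{L:saturation}) applies only to bi-unravelled models; Corollary \ref{C:saturation} instead first passes to the bi-unravelling (which is only theory-equivalent at the point, not an extension) and so delivers saturated models merely satisfying $Th_\mathcal{L}(\mathcal{M}_i, w_i) = Th_\mathcal{L}(\mathcal{N}_i, w_i)$ --- which is all your argument actually uses.
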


\section{The proof of Theorem \ref{L:main}}\label{main}
Before we start with the proof, we need one more piece of
notation. If $\mathcal{L}$ is an abstract bi-intuitionistic logic,
$\Theta$ a signature, and $\Gamma \subseteq L(\Theta)$, then we
let $Mod_\mathcal{L}(\Theta,\Gamma)$ denote the class of pointed
$\Theta$-models $(\mathcal{N}, u)$ such that for every $\phi \in
\Gamma$ it is true that:
$$
\mathcal{N}, u \models_\mathcal{L} \phi.
$$
If $\Gamma = \{ \phi \}$ for some $\phi \in L(\Theta)$ then
instead of $Mod_\mathcal{L}(\Theta,\Gamma)$ we simply write
$Mod_\mathcal{L}(\Theta,\phi)$.

We now start by establishing a couple of technical facts first:

\begin{proposition}\label{L:proposition1}
Let $\mathcal{L}$ be a $\star$-compact  abstract bi-intuitionistic
logic extending $\mathsf{BIL}$. Suppose that $\mathsf{BIL}
\not\equiv\mathcal{L}$. Then, there are $\phi \in L(\Theta_\phi)$
and pointed $\Theta_\phi$-models $(\mathcal{M}_1, w_1)$,
$(\mathcal{M}_2, w_2)$ such that $Th^+_{BIL}(\mathcal{M}_1, w_1)
\subseteq Th^+_{BIL}(\mathcal{M}_2, w_2)$ while $\mathcal{M}_1, w_1
\models_\mathcal{L} \phi$ and $\mathcal{M}_2, w_2
\not\models_\mathcal{L} \phi$.
\end{proposition}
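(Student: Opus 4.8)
The plan is to argue contrapositively at the level of a single non-$\mathsf{BIL}$-definable formula. Since $\mathsf{BIL} \sqsubseteq \mathcal{L}$ holds by hypothesis while $\mathsf{BIL} \not\equiv \mathcal{L}$, we must have $\mathcal{L} \not\sqsubseteq \mathsf{BIL}$, so some formula of $\mathcal{L}$ is $\mathcal{L}$-equivalent to no formula of $BIL$. Using the Occurrence property to pass to the finite occurrence signature (and Expansion to discard the surplus letters, which shows that non-definability persists over $\Theta_\phi$), I would fix such a $\phi \in L(\Theta_\phi)$ that is $\mathcal{L}$-equivalent to no $\psi \in BIL(\Theta_\phi)$. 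Suppose, for contradiction, that the conclusion fails for this $\phi$, i.e.\ that there is \emph{no} pair of pointed $\Theta_\phi$-models $(\mathcal{M}_1, w_1)$, $(\mathcal{M}_2, w_2)$ with $Th^+_{BIL}(\mathcal{M}_1, w_1) \subseteq Th^+_{BIL}(\mathcal{M}_2, w_2)$, $\mathcal{M}_1, w_1 \models_\mathcal{L} \phi$ and $\mathcal{M}_2, w_2 \not\models_\mathcal{L} \phi$. This says precisely that $\phi$ is \emph{preserved under inclusion of positive $BIL$-theories}: whenever $Th^+_{BIL}(\mathcal{M}_1, w_1) \subseteq Th^+_{BIL}(\mathcal{M}_2, w_2)$ and $\mathcal{M}_1, w_1 \models_\mathcal{L} \phi$, then $\mathcal{M}_2, w_2 \models_\mathcal{L} \phi$. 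The whole plan is then to show that this preservation property, combined with $\star$-compactness, forces $\phi$ to be $\mathcal{L}$-equivalent to a single formula of $BIL(\Theta_\phi)$, contradicting the choice of $\phi$.

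To this end I would introduce the set $\Phi$ of \emph{positive $BIL(\Theta_\phi)$-consequences of $\phi$}, namely those $\psi \in BIL(\Theta_\phi)$ such that $\mathcal{M}, w \models_\mathcal{L} \phi$ implies $\mathcal{M}, w \models_{BIL} \psi$ for every pointed $\Theta_\phi$-model $(\mathcal{M}, w)$. The key lemma to establish is that every pointed model $(\mathcal{N}, u)$ with $\mathcal{N}, u \models_{BIL} \Phi$ already satisfies $\mathcal{N}, u \models_\mathcal{L} \phi$. To prove this I would consider the $\mathcal{L}$-theory $(\{\phi\}, Th^-_{BIL}(\mathcal{N}, u))$ and show it is $\mathcal{L}$-satisfiable via $\star$-compactness: a finite subtheory has the form $(\{\phi\}, \{\chi_1, \ldots, \chi_k\})$ with each $\chi_i \in Th^-_{BIL}(\mathcal{N}, u)$, and if it were unsatisfiable then every $\mathcal{L}$-model of $\phi$ would satisfy $\chi_1 \vee \cdots \vee \chi_k$, putting this disjunction (which lies in $BIL(\Theta_\phi)$ by Closure) into $\Phi$; but then $\mathcal{N}, u \models_{BIL} \chi_1 \vee \cdots \vee \chi_k$, contradicting $\chi_i \in Th^-_{BIL}(\mathcal{N}, u)$ for all $i$. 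Any model $(\mathcal{M}, w)$ of the full theory then satisfies $\mathcal{M}, w \models_\mathcal{L} \phi$ and falsifies every member of $Th^-_{BIL}(\mathcal{N}, u)$, which is exactly $Th^+_{BIL}(\mathcal{M}, w) \subseteq Th^+_{BIL}(\mathcal{N}, u)$; the assumed preservation property now yields $\mathcal{N}, u \models_\mathcal{L} \phi$, as required.

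Finally, I would extract a single formula by a second application of $\star$-compactness. By the key lemma, the $\mathcal{L}$-theory $(\Phi, \{\phi\})$ is not $\mathcal{L}$-satisfiable, since a model of it would satisfy all of $\Phi$ yet falsify $\phi$. Hence some finite $\Phi' \Subset \Phi$ already makes $(\Phi', \{\phi\})$ unsatisfiable, i.e.\ $\Phi' \models_\mathcal{L} \phi$; since conversely $\phi \models_\mathcal{L} \bigwedge \Phi'$ by the definition of $\Phi$, the formula $\bigwedge \Phi' \in BIL(\Theta_\phi)$ (again by Closure) is $\mathcal{L}$-equivalent to $\phi$, contradicting that $\phi$ is definable in no $BIL$. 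This contradiction shows that the assumed preservation must fail, which is exactly the asserted existence of the pair $(\mathcal{M}_1, w_1)$, $(\mathcal{M}_2, w_2)$. The step I expect to be the main obstacle is the careful bookkeeping of the positive versus negative parts of the mixed $\mathcal{L}$/$BIL$-theories: one has to place $\phi$ in the positive part when producing a model with a smaller positive $BIL$-theory, and in the negative part when squeezing out the finite conjunction, all the while keeping the auxiliary disjunctions and conjunctions inside $BIL(\Theta_\phi)$ so that Closure and $\star$-compactness both apply.
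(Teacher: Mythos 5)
Your proposal is correct, but it takes a recognizably different route from the paper's. The paper argues globally over all formulas: it supposes that for every $\phi$ the class $Mod_\mathcal{L}(\Theta_\phi,\phi)$ is covered by the classes $Mod_\mathcal{L}(\Theta_\phi,Th^+_{BIL}(\mathcal{N},u))$ with $(\mathcal{N},u)\models_\mathcal{L}\phi$, and then applies $\star$-compactness twice to \emph{unsatisfiable} theories: first to $(Th^+_{BIL}(\mathcal{N},u),\{\phi\})$, extracting for each model a finite $\Psi_{(\mathcal{N},u)}$ with $\bigwedge\Psi_{(\mathcal{N},u)}$ implying $\phi$ (one per model, picked by the Axiom of Choice), and then to $(\{\phi\},\{\bigwedge\Psi_{(\mathcal{N},u)}\mid (\mathcal{N},u)\in Mod_\mathcal{L}(\Theta_\phi,\phi)\})$, concluding that $\phi$ is equivalent to a finite disjunction $\bigvee\Gamma$ of such conjunctions; the counterexample pair is then read off the failure of the covering for some $\phi$. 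You instead fix a single non-$BIL$-definable $\phi$ up front and run the classical consequence-set style of preservation-theorem proofs: your key lemma, that the set $\Phi$ of positive $BIL(\Theta_\phi)$-consequences of $\phi$ axiomatizes $Mod_\mathcal{L}(\Theta_\phi,\phi)$, has no counterpart in the paper, and it uses compactness once in the \emph{satisfiability} direction, manufacturing a model of $(\{\phi\},Th^-_{BIL}(\mathcal{N},u))$ --- i.e.\ a model of $\phi$ whose positive $BIL$-theory is contained in that of $(\mathcal{N},u)$ --- to which the assumed preservation property is applied; your second compactness application then extracts a finite $\Phi'\Subset\Phi$ with $\phi$ equivalent to the single conjunction $\bigwedge\Phi'$. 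What each approach buys: yours replaces the paper's class-indexed choice of the sets $\Psi_{(\mathcal{N},u)}$ by the canonically defined $\Phi$ and ends with a plain conjunction rather than a disjunction of conjunctions, while the paper's covering hypothesis hands it the unsatisfiability of $(Th^+_{BIL}(\mathcal{N},u),\{\phi\})$ directly, so it never needs your auxiliary model construction. Two small points you should make explicit for completeness: $\phi$ is $\mathcal{L}$-satisfiable (otherwise it is equivalent to $\bot\in BIL(\Theta_\phi)$, contradicting non-definability), which disposes of the degenerate finite subtheories $(\{\phi\},\varnothing)$ and $(\Phi',\varnothing)$ in your two compactness steps; and your reduction to the finite signature $\Theta_\phi$ via Occurrence and Expansion is sound because $BIL$-satisfaction likewise depends only on the propositional letters occurring in a formula, so non-definability indeed persists when passing from $\Theta$ to $\Theta_\phi$.
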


\begin{proof}
Suppose that for an arbitrary $\phi \in L(\Theta_\phi)$ we have
shown that:

\begin{center}
\begin{itemize}
\item[(i)]  $Mod_\mathcal{L}(\Theta_\phi,\phi) =
\bigcup_{(\mathcal{N}, u) \in Mod_\mathcal{L}(\Theta_\phi,\phi)}
Mod_\mathcal{L}(\Theta_\phi,Th^+_{BIL}(\mathcal{N}, u))$,

\end{itemize}
\end{center}
Let  $(\mathcal{N}, u) \in Mod_\mathcal{L}(\Theta_\phi,\phi)$ be
arbitrary. The above implies that every $\Theta_\phi$-model of
$Th^+_{BIL}(\mathcal{N}, u)$ must be a model of $\phi$. But then
the theory $ (Th^+_{BIL}(\mathcal{N}, u), \{ \phi \})$ is
$\mathcal{L}$-unsatisfiable. By the $\star$-compactness of
$\mathcal{L}$, for some finite $\Psi_{(\mathcal{N}, u)} \subseteq
Th^+_{BIL}(\mathcal{N}, u)$ (and we can pick a unique one using the
Axiom of Choice if need be), the theory $(\Psi_{(\mathcal{N}, u)} ,\{
\phi\})$ is $\mathcal{L}$-unsatisfiable. Hence, $\bigwedge
\Psi_{(\mathcal{N}, u)} $ logically implies $\phi$ in
$\mathcal{L}$. Then, given (i), we get that
\begin{center}
\begin{itemize}
 \item [(ii)] $Mod_\mathcal{L}(\Theta_\phi,\phi) = \bigcup_{(\mathcal{N}, u) \in Mod_\mathcal{L}(\Theta_\phi,\phi)}Mod_\mathcal{L}(\Theta_\phi,\Psi_{(\mathcal{N}, u)})$.
\end{itemize}
\end{center}
However, this means that the theory $(\{ \phi \}, \{ \bigwedge
\Psi_{(\mathcal{N}, u)} \mid (\mathcal{N}, u) \in
Mod_\mathcal{L}(\Theta_\phi,\phi) \})$ is
$\mathcal{L}$-unsatisfiable and by (i), for some finite $\Gamma
\subseteq  \{ \bigwedge \Psi_{(\mathcal{N}, u)} \mid (\mathcal{N},
u) \in Mod_\mathcal{L}(\Theta_\phi,\phi) \}$, the theory $(\{ \phi
\}, \Gamma)$ is $\mathcal{L}$-unsatisfiable. This means that  $
Mod_\mathcal{L}(\Theta_\phi,\phi) \subseteq
Mod_\mathcal{L}(\Theta_\phi,\bigvee\Gamma)$. So, using (ii), since
clearly $Mod_\mathcal{L}(\Theta_\phi,\bigvee\Gamma) \subseteq
\bigcup_{(\mathcal{N}, u) \in
Mod_\mathcal{L}(\Theta_\phi,\phi)}Mod_\mathcal{L}(\Theta_\phi,\Psi_{(\mathcal{N},
u)})$, we get that:
\begin{center}
\begin{itemize}
 \item [(iii)] $Mod_\mathcal{L}(\Theta_\phi,\phi) = Mod_\mathcal{L}(\Theta_\phi,\bigvee\Gamma)$.
\end{itemize}
\end{center}
Now, $ \bigvee \Gamma$ is a perfectly good formula of
$BIL(\Theta_\phi)$ involving only finitary conjunctions and
disjunctions. So we have shown that every $\phi \in
L(\Theta_\phi)$ is just a bi-intuitionistic $\Theta_\phi$-formula
and hence that $\mathcal{L} \equiv \mathsf{BIL}$ which is in
contradiction with the hypothesis of the proposition.

Therefore, (i) must fail for at least one $\phi \in
L(\Theta_\phi)$, and clearly, for this $\phi$ it can only fail if
$$
\bigcup_{(\mathcal{N}, u) \in Mod_\mathcal{L}(\Theta_\phi,\phi)}
Mod_\mathcal{L}(\Theta_\phi,Th^+_{BIL}(\mathcal{N}, u))
\not\subseteq Mod_\mathcal{L}(\Theta_\phi,\phi).
$$
 But the latter
means that for some pointed intuitionistic $\Theta_\phi$-model
$(\mathcal{M}_1, w_1)$ such that $\mathcal{M}_1, w_1
\models_\mathcal{L}\phi$ there is another $\Theta_\phi$-model
$(\mathcal{M}_2, w_2)$ such that both $\mathcal{M}_2, w_2
\not\models_\mathcal{L}\phi$ and $Th^+_{BIL}(\mathcal{M}_1, w_1)$
is satisfied at $(\mathcal{M}_2, w_2)$. The latter means, in turn,
that we have $Th^+_{BIL}(\mathcal{M}_1, w_1) \subseteq
Th^+_{BIL}(\mathcal{M}_2, w_2)$ as desired.
\end{proof}

Assume $\mathcal{M}$ is a $\Theta$-model. We let 
$\Theta_\mathcal{M}$ be $\Theta \cup \{ q^+_w, q^-_w \mid w \in W
\}$ such that $\Theta \cap \{ q^+_w, q^-_w \mid w \in W \} =
\varnothing$, and we define that $[\mathcal{M}] = (W, \prec, [V])$ is
the $\Theta_\mathcal{M}$-model, such that $W$ and $\prec$ are just
borrowed from $\mathcal{M}$ and $[V]$ coincides with $V$ on
elements of $\Theta$, whereas for arbitrary $v, w \in W$ we set
that  $v \in [V](q^+_w)$ iff $w\mathrel{\prec}v$ and $v \notin
[V](q^-_w)$ iff $v\mathrel{\prec}w$. It is straightforward to check that $[\mathcal{M}]$ is a
$\Theta_\mathcal{M}$-model.

\begin{lemma}\label{L:lemma1}
	Let $\mathcal{L}$ be an abstract
	bi-intuitionistic logic preserved under bi-asimulations and extending $\mathsf{BIL}$, let $(\mathcal{M},
	w)$ be a pointed bi-unravelled $\Theta$-model, and let $(\mathcal{N}, v)$ be
	another pointed $\Theta_{\mathcal{M}}$-model. Assume that
	$Th_\mathcal{L}(\mathcal{N}, v) =
	Th_\mathcal{L}([\mathcal{M}], w)$. Then there exists an
	$\mathcal{L}$-elementary embedding $f$ of $[\mathcal{M}]$ into
	$\mathcal{N}$ such that $f(w) = v$.
\end{lemma}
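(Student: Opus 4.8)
The plan is to construct the embedding $f$ by recursion along the tree of approximations underlying the bi-unravelled model $[\mathcal{M}]$, processing the nodes $\alpha \in W$ in order of their length as sequences and starting from $f(w) := v$. Throughout the recursion I would maintain two invariants on the part of $f$ already defined: that $f$ is injective and both preserves and reflects the accessibility relation (so that it is an order-isomorphism onto its image), and that it matches full $\mathcal{L}$-theories, $Th_\mathcal{L}([\mathcal{M}],\alpha) = Th_\mathcal{L}(\mathcal{N}, f(\alpha))$. The base case is immediate from the hypothesis $Th_\mathcal{L}(\mathcal{N}, v) = Th_\mathcal{L}([\mathcal{M}], w)$. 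Since the positive and negative parts of a theory are complementary inside $L(\Theta_{\mathcal{M}})$, it suffices at each step to secure equality of the \emph{positive} theories, which is what lets me feed the problem to monotonicity (Lemma \ref{L:monotonicity}, available since $\mathcal{L}$ is preserved under bi-asimulations) and to the marker letters $q^+_\gamma, q^-_\gamma$. The whole point of passing to $[\mathcal{M}]$ is that these markers name nodes: one has $\beta \in [V](q^+_\alpha)$ iff $\alpha \prec \beta$ and $\beta \notin [V](q^-_\alpha)$ iff $\beta \prec \alpha$, so $\beta$ is the \emph{unique} node at which $q^+_\beta$ holds while $q^-_\beta$ fails.

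The forward step is the clean case and I would carry it out explicitly. Suppose $\alpha$ is already mapped to $b$ and $\beta$ lies above $\alpha$ (so $\alpha \prec \beta$); I want an image $c$ with $b \lhd c$. Evaluating $q^+_\beta \to q^-_\beta$ at $\alpha$ fails, witnessed uniquely by $\beta$, so $q^+_\beta \to q^-_\beta$ lies in the negative theory of $\alpha$, hence of $b$; this produces a $\lhd$-successor $c$ of $b$ with $c \models q^+_\beta$ and $c \not\models q^-_\beta$, which I take as $f(\beta)$. To see that the theories agree: if $[\mathcal{M}], \beta \models \phi$ then, by monotonicity, every node above $\beta$ satisfies $\phi$, so $\alpha \models q^+_\beta \to \phi$; transporting this true formula to $b$ and using $b \lhd c$ together with $c \models q^+_\beta$ yields $\mathcal{N}, c \models \phi$. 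Conversely, if $[\mathcal{M}], \beta \not\models \phi$ one checks that $\alpha \models (q^+_\beta \wedge \phi) \to q^-_\beta$ (the only potential counterexample is $\beta$ itself, where the antecedent already fails); transporting this to $b$ and using $c \models q^+_\beta$, $c \not\models q^-_\beta$ forces $\mathcal{N}, c \not\models \phi$. Thus the invariants are preserved, with order-preservation coming from $b \lhd c$ and injectivity from the theory-matching invariant, which separates $c$ from the images of the other nodes.

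The backward step is where the real difficulty lies, and I expect it to be the main obstacle. Here $\beta$ lies \emph{below} its already-mapped tree-parent $\alpha$, and the image $c$ must be a $\lhd$-predecessor of $b$. Existence of a candidate is still easy: the formula $q^+_\beta \ll q^-_\beta$ holds at $\alpha$ (again witnessed uniquely by $\beta$), hence at $b$, producing a $\lhd$-predecessor $c$ of $b$ with $c \models q^+_\beta$ and $c \not\models q^-_\beta$. The trouble is theory-matching. The two formulas that drove the forward step both required the image to lie \emph{above} $b$, and neither survives when $c \lhd b$; more fundamentally, a formula true at $b$ can only constrain the successors of $b$ through $\to$, whereas co-implication is purely existential and so cannot pin down the behaviour of the particular predecessor $c$. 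In other words, $\mathsf{BIL}$ has no universal backward modality with which to transport a positive theory downwards, which is precisely the complication introduced by the co-implication. Overcoming this is what the $q^-$ markers are engineered for: because $q^-_\beta$ has the entire down-set of $\beta$ as its falsity set, it can be made to simulate a bounded universal statement over that down-set, and I would use it---together with an auxiliary theory built by recursion on the tree (rather than by a single finite scheme, cf.\ the inductive definition flagged in the introduction)---to force both $Th^+_\mathcal{L}([\mathcal{M}],\beta) \subseteq Th^+_\mathcal{L}(\mathcal{N}, c)$ and its converse, choosing the witness $c$ so that it realizes this encoded theory. Once both steps are in place, the partial maps cohere into a total $f$; the conditions $f(\alpha) \models q^+_\alpha$ and $f(\alpha) \not\models q^-_\alpha$ encode $\prec$ faithfully, yielding injectivity and reflection of accessibility (with antisymmetry supplied by Lemma \ref{L:unravelling-mod} and the path analysis of Lemmas \ref{L:rho-relation} and \ref{L:prec-relation}), so that $f$ is the desired $\mathcal{L}$-elementary embedding with $f(w) = v$.
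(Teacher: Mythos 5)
Your overall architecture (recursion on sequence length, a full-theory invariant, the marker letters $q^{\pm}$, monotonicity via Lemma \ref{L:monotonicity}) matches the paper's proof, and your forward step is essentially its Case 1. But the backward step, which you yourself flag as the main obstacle, is left genuinely unproven, and the route you sketch for it cannot work under the hypotheses of this lemma. You propose to ``choose the witness $c$ so that it realizes'' an auxiliary, recursively built theory; yet Lemma \ref{L:lemma1} assumes only that $\mathcal{L}$ extends $\mathsf{BIL}$ and is preserved under bi-asimulations --- neither $\star$-compactness nor TUP nor any saturation of $\mathcal{N}$ is available here, so there is no mechanism for realizing a prescribed infinite theory at some predecessor of $b$. (The inductively defined formula schemes flagged in the introduction are the $\phi^k_{\bar{w}_n}, \psi^k_{\bar{w}_n}, \theta^k_{\bar{w}_n}, \tau^k_{\bar{w}_n}$ of Lemmas \ref{L:representation-emb}--\ref{L:representation-m}; they are deployed in Proposition \ref{L:saturation}, where compactness \emph{is} a hypothesis, not in the present lemma.)

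The missing idea is that the backward step is exactly symmetric to the forward one, once you observe the dual of Lemma \ref{L:monotonicity}: since every formula persists upward along $\lhd$, falsity persists downward, so a formula \emph{false} at $b$ constrains \emph{all} $\lhd$-predecessors of $b$, including $c$. Concretely, for $\phi \in Th^+_\mathcal{L}([\mathcal{M}],\beta)$ the co-implication $q^+_\beta \ll \phi$ is false at $\alpha$ (any node satisfying $q^+_\beta$ lies above $\beta$ and hence satisfies $\phi$ by monotonicity), and for $\psi \in Th^-_\mathcal{L}([\mathcal{M}],\beta)$ the formula $\psi \ll q^-_\beta$ is false at $\alpha$ (any node where $q^-_\beta$ fails lies below $\beta$ and hence falsifies $\psi$). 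Transporting these negative formulas through $Th_\mathcal{L}([\mathcal{M}],\alpha) = Th_\mathcal{L}(\mathcal{N},b)$ and then down to $c$ by downward persistence of falsity, the reflexivity of $\lhd$ together with $c \models q^+_\beta$ and $c \not\models q^-_\beta$ forces $Th_\mathcal{L}([\mathcal{M}],\beta) = Th_\mathcal{L}(\mathcal{N},c)$ --- no auxiliary theory and no realization step are needed. Your diagnosis that co-implication ``is purely existential and so cannot pin down the behaviour of the particular predecessor $c$'' is precisely backwards: it is the \emph{negated} co-implications that act as bounded universal statements over the down-set, which is the functionality your $q^-$ remark gestures at but never executes. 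With this repair, the rest of your sketch (injectivity and reflection of the order via the $q^+$ markers, and preservation of $\prec^{un}_w$ across distinct branches by composing parent--child steps along the decompositions of Lemma \ref{L:prec-relation}) goes through as in the paper.
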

\begin{proof}
We proceed by induction on $k \geq 1$ and define an increasing
	chain of the form
	$$
	f_1 \subseteq\ldots \subseteq f_k \subseteq\ldots
	$$
	in such a way that, for each $k < \omega$, $f_k:\{\bar{u}_l \in
	W^{un}_w\mid l \leq k\} \to  U$ is an injective
	function. In the process of inductively defining $f_k$ for all $k < \omega$,
	we show that our construction satisfies the following conditions
	for all $1 \leq l,m \leq k < \omega$, and all $\bar{u}_l,
	\bar{v}_m, \bar{w}_k \in W^{un}_w$:
	\begin{enumerate}	
		\item[$(i)_k$] $Th_\mathcal{L}(\mathcal{N}, f_k(\bar{w}_k)) =
		Th_\mathcal{L}([\mathcal{M}], \bar{w}_k)$.
		
		\item[$(ii)_k$] $f_k(\bar{u}_l)\mathrel{\lhd} f_k(\bar{v}_m)\Leftrightarrow \bar{u}_l\mathrel{\prec^{un}_w}\bar{v}_m$.
	\end{enumerate}
	
	\emph{Induction basis}. $k = 1$. Then $\bar{w}_k = w$, so we have
	to define $f_1$ for this unique node. We set $f_1(w) := v$
	thus getting an injective function. Under
	these settings, it is true that
	\begin{align*}
		Th_\mathcal{L}(\mathcal{N}, f_1(w)) = Th_\mathcal{L}(\mathcal{N}, v) = Th_\mathcal{L}([\mathcal{M}], w)&&\text{(by the assumption of the lemma)}
	\end{align*}
	so
	$(i)_1$ is satisfied. Next, if $1 \leq l,m \leq 1$ and
	$\bar{u}_l, \bar{w}_m \in W^{un}_w$, then $l = m = 1$ and we get
	$\bar{u}_l = w = \bar{w}_m = \bar{w}_l$ so that both $\bar{u}_l\mathrel{\prec^{un}_w}\bar{w}_m$ and $f_1(\bar{u}_l) =
	v \mathrel{\lhd} v= f_1(\bar{w}_m)$ by reflexivity.
	Therefore, $(ii)_1$ is satisfied as well.
	
	\emph{Induction step}. $k = l + 1$ for some $l \geq 1$. Then we
	already have the chain of injections $f_1 \subseteq\ldots
	\subseteq f_l$ defined in
	such a way that $(i)_m-(ii)_m$ are satisfied by $f_m$ for all $m \leq l$.
	
	If $\bar{w}_k \in W^{un}_w$ is chosen arbitrarily, then we must
	have $\bar{w}_k = \bar{w}_l^\frown w_k$. We have then
	\begin{equation}\label{E:eq1}
		Th_\mathcal{L}(\mathcal{N}, f_l(\bar{w}_l)) = Th_\mathcal{L}([\mathcal{M}], \bar{w}_l)
	\end{equation}
	by $(i)_l$. Two cases are possible:
	
	\textit{Case 1}. $w_l\mathrel{\prec}w_k$. Then, by definition of $[\mathcal{M}]$ and by the closure of
	$\mathcal{L}$ w.r.t. intuitionistic implication, we have that the
	set:
	$$
	\Gamma_{\bar{w}_k} = \{ q^+_{\bar{w}_k} \to \phi \mid
	\phi \in Th^+_\mathcal{L}([\mathcal{M}], \bar{w}_k) \} \cup
	\{ \psi \to q^-_{\bar{w}_k} \mid \psi \in
	Th^-_\mathcal{L}([\mathcal{M}], \bar{w}_k) \}
	$$
	is a subset of $Th^+_\mathcal{L}([\mathcal{M}], \bar{w}_l)$, thus
	also of $Th^+_\mathcal{L}(\mathcal{N},
	f_l(\bar{w}_l))$. We also know that, by
	$\bar{w}_l\mathrel{\prec^{un}_w}\bar{w}_k$, and by the fact that
	the theory $(\{ q^+_{\bar{w}_k} \},\{ q^-_{\bar{w}_k}\})$ is $\mathcal{L}$-satisfied at
	$([\mathcal{M}], \bar{w}_k)$, we have:
	$$
	[\mathcal{M}], \bar{w}_l \not\models_\mathcal{L}
	q^+_{\bar{w}_k}\to q^-_{\bar{w}_k}.
	$$
	Therefore, by \eqref{E:eq1}, also $\mathcal{N}, f_l(\bar{w}_l)
	\not\models_\mathcal{L} q^+_{\bar{w}_k}\to q^-_{\bar{w}_k}$. So fix any $v'_{\bar{w}_k} \in U$ such that $f_l(\bar{w}_l)\mathrel{\lhd}v'_{\bar{w}_k}$ and $(\{ q^+_{\bar{w}_k} \},\{ q^-_{\bar{w}_k}\})$ is $\mathcal{L}$-satisfied at
	$(\mathcal{N}, v'_{\bar{w}_k})$. By Lemma \ref{L:monotonicity} and the preservation of $\mathcal{L}$ under
	bi-asimulations, we get that $\Gamma_{\bar{w}_k} \subseteq Th^+_\mathcal{L}(\mathcal{N},
	f_l(\bar{w}_l)) \subseteq Th^+_\mathcal{L}(\mathcal{N},
	v'_{\bar{w}_k})$ and thus, clearly, $Th_\mathcal{L}([\mathcal{M}], \bar{w}_k) = Th_\mathcal{L}(\mathcal{N}, v'_{\bar{w}_k})$.
	
	\textit{Case 2}. $w_l\mathrel{\succ}w_k$. Then, by definition of $[\mathcal{M}]$ and by the closure of
	$\mathcal{L}$ w.r.t. co-implication, we have that the
	set:
	$$
	\Delta_{\bar{w}_k} = \{ q^+_{\bar{w}_k} \ll \phi \mid
	\phi \in Th^+_\mathcal{L}([\mathcal{M}], \bar{w}_k) \} \cup
	\{ \psi \ll q^-_{\bar{w}_k} \mid \psi \in
	Th^-_\mathcal{L}([\mathcal{M}], \bar{w}_k) \}
	$$
	is a subset of $Th^-_\mathcal{L}([\mathcal{M}], \bar{w}_l)$, thus
	also of $Th^-_\mathcal{L}(\mathcal{N},
	f_l(\bar{w}_l))$. We also know that, by
	$\bar{w}_k\mathrel{\prec^{un}_w}\bar{w}_l$, and by the fact that
	the theory $(\{ q^+_{\bar{w}_k} \},\{ q^-_{\bar{w}_k}\})$ is $\mathcal{L}$-satisfied at
	$([\mathcal{M}], \bar{w}_k)$, we have:
	$$
	[\mathcal{M}], \bar{w}_l \models_\mathcal{L}
	q^+_{\bar{w}_k}\ll q^-_{\bar{w}_k}.
	$$
	Therefore, by \eqref{E:eq1}, also $\mathcal{N}, f_l(\bar{w}_l)
	\models_\mathcal{L} q^+_{\bar{w}_k}\ll q^-_{\bar{w}_k}$. So fix any $v'_{\bar{w}_k} \in U$ such that $f_l(\bar{w}_l)\mathrel{\rhd}v'_{\bar{w}_k}$ and $(\{ q^+_{\bar{w}_k} \},\{ q^-_{\bar{w}_k}\})$ is $\mathcal{L}$-satisfied at
	$(\mathcal{N}, v'_{\bar{w}_k})$. By Lemma \ref{L:monotonicity} and the preservation of $\mathcal{L}$ under
	bi-asimulations, we get that $\Delta_{\bar{w}_k} \subseteq Th^-_\mathcal{L}(\mathcal{N},
	f_l(\bar{w}_l)) \subseteq Th^-_\mathcal{L}(\mathcal{N},
	v'_{\bar{w}_k})$ and thus, clearly, $Th_\mathcal{L}([\mathcal{M}], \bar{w}_k) = Th_\mathcal{L}(\mathcal{N}, v'_{\bar{w}_k})$.
	
	Having thus decided on our choices for $v'_{\bar{w}_k}$ for every $\bar{w}_k\in W^{un}_w$, we now set:
	$$
	f_k(\bar{u}_m) :=\left\{%
	\begin{array}{ll}
		f_l(\bar{u}_m), & \hbox{if $m \leq l$;} \\
		v'_{\bar{u}_m}, & \hbox{if $m = k$.} \\
	\end{array}%
	\right.
	$$
	It is clear then that $f_k$ is a function from $\{\bar{u}_m \in
	W^{un}_w\mid m \leq k\}$ to $U$, since $f_l$ is a
	function and we have fixed a unique $v'_{\bar{w}_k} \in U$ for every $\bar{w}_k \in W^{un}_w$. It is also immediate
	to see that $(i)_k$ is satisfied.
	
	We now address the injectivity of $f_k$. Let $m \leq r \leq k$ and
	let $\bar{u}_m, \bar{w}_r \in W^{un}_w$ be such that $\bar{u}_m
	\neq \bar{w}_r$. Then we have either
	$\bar{u}_m\mathrel{\not\prec^{un}_w}\bar{w}_r$ or
	$\bar{w}_r\mathrel{\not\prec^{un}_w}\bar{u}_m$. In the first case,
	we have $q^+_{\bar{u}_m} \in
	Th^+_\mathcal{L}([\mathcal{M}], \bar{u}_m)\setminus
	Th^+_\mathcal{L}([\mathcal{M}], \bar{w}_r)$, so by
	$(i)_k$ we must have
	$$
	q^+_{\bar{u}_m} \in
	Th^+_\mathcal{L}(\mathcal{N},f_k(\bar{u}_m))
	\setminus Th^+_\mathcal{L}(\mathcal{N},
	f_k(\bar{w}_r)),
	$$
	whence $f_k(\bar{u}_m) \neq f_k(\bar{w}_k)$. In the second case, we
	reason symmetrically, using $q^+_{\bar{w}_r}$ instead
	of $q^+_{\bar{u}_m}$. Thus $f_k$ is injective.
	
	Next, we verify $(ii)_k$. Let $\bar{u}_m, \bar{w}_r \in W^{un}_w$ be such
	that $m, r \leq k$.
	
	($\Leftarrow$). If $\bar{u}_m\mathrel{\prec^{un}_w}\bar{w}_r$, then choose, by Lemma \ref{L:prec-relation}, $m_1, n_1 \geq 0$ and $\delta \in W^{un}_w$ and (not necessarily pairwise distinct) $\bar{s}_{m_1} \in W^{m_1}$ and $\bar{t}_{n_1} \in W^{n_1}$ such that $s_{m_1}\mathrel{\prec}\ldots\mathrel{\prec}s_1\mathrel{\prec}end(\delta)\mathrel{\prec}t_1\mathrel{\prec}\ldots\mathrel{\prec}t_{n_1}$, and:
	\begin{align*}
		\bar{u}_m &= \delta^\frown\bar{s}_{m_1}\\
		\bar{w}_r &=  \delta^\frown\bar{t}_{n_1}.
	\end{align*}
	The following cases are then possible:
	
	\textit{Case 1}. $m_1 = n_1 = 0$. Then $\bar{u}_m = \bar{w}_r$, thus also 
	$f(\bar{u}_m) = f(\bar{w}_r)$, and so $f(\bar{u}_m) \lhd f(\bar{w}_r)$ by reflexivity.
	
	\textit{Case 2}. $m_1, n_1 > 0$. But then also $\delta^\frown\bar{s}_{m_1- 1}, \delta^\frown\bar{t}_{n_1 -1} \in W^{un}_w$, and due to their smaller length, we also get that $\delta^\frown\bar{s}_{m_1- 1}, \delta^\frown\bar{t}_{n_1 -1} \in dom(f_k)$. Moreover, it follows from Lemma \ref{L:prec-relation} that we have:
	$$
	\bar{u}_m = \delta^\frown\bar{s}_{m_1}\mathrel{\prec^{un}_w}\delta^\frown\bar{s}_{m_1- 1}\mathrel{\prec^{un}_w}\delta^\frown\bar{t}_{n_1 -1}\mathrel{\prec^{un}_w}\delta^\frown\bar{t}_{n_1} = \bar{w}_r.
	$$
	We now claim the following:
	
	\textit{Claim}. We have:
	$$
	f_k(\bar{u}_m) = f_k(\delta^\frown\bar{s}_{m_1})\mathrel{\lhd}f_k(\delta^\frown\bar{s}_{m_1- 1})\mathrel{\lhd}f_k(\delta^\frown\bar{t}_{n_1 -1})\mathrel{\lhd}f_k(\delta^\frown\bar{t}_{n_1}) = f_k(\bar{w}_r).
	$$
	
	Indeed, if $m = k$, then we have $f_k(\bar{u}_m) = f_k(\delta^\frown\bar{s}_{m_1})\mathrel{\lhd}f_k(\delta^\frown\bar{s}_{m_1- 1})$ by the choice of $f_k(\bar{u}_m)$, otherwise, we get it by $(ii)_l$. Similarly, if $r = k$, then we get $f_k(\delta^\frown\bar{t}_{n_1 -1})\mathrel{\lhd}f_k(\delta^\frown\bar{t}_{n_1}) = f_k(\bar{w}_r)$ by the choice of $f_k(\bar{w}_r)$, otherwise we get the same result by $(ii)_l$. Finally, the length of both $\delta^\frown\bar{s}_{m_1- 1}$ and $\delta^\frown\bar{t}_{n_1 -1}$ cannot exceed $l$, therefore $f_k(\delta^\frown\bar{s}_{m_1- 1})\mathrel{\lhd}f_k(\delta^\frown\bar{t}_{n_1 -1})$ also follows from $(ii)_l$.
	
	The Claim is thus proven and the ($\Leftarrow$)-part for Case 2 clearly follows from the Claim.
	
	\textit{Case 3}. $m_1 = 0$, $n_1 > 0$. Then also $\bar{u}_m = \delta$, $\delta^\frown\bar{t}_{n_1 -1} \in W^{un}_w$, and both of these sequences have a length that does not exceed $l$. We have then,  by Lemma \ref{L:prec-relation} that: $
	\bar{u}_m = \delta\mathrel{\prec^{un}_w}\delta^\frown\bar{t}_{n_1 -1}\mathrel{\prec^{un}_w}\delta^\frown\bar{t}_{n_1} = \bar{w}_r$. We immediately get that $f_k(\bar{u}_m) = f_l(\bar{u}_m)\lhd f_l(\delta^\frown\bar{t}_{n_1}) = f_k(\delta^\frown\bar{t}_{n_1})$ by $(ii)_l$. Now, if $r = k$, then we get $f_k(\delta^\frown\bar{t}_{n_1 -1})\mathrel{\lhd}f_k(\delta^\frown\bar{t}_{n_1}) = f_k(\bar{w}_r)$ by the choice of $f_k(\bar{w}_r)$, otherwise we get the same result by $(ii)_l$. Summing up, we get that $f_k(\bar{u}_m)\mathrel{\lhd} f_k(\delta^\frown\bar{t}_{n_1})\mathrel{\lhd} f_k(\bar{w}_r)$, hence also $f_k(\bar{u}_m)\mathrel{\lhd} f_k(\bar{w}_r)$ by transitivity.
	
	\textit{Case 4}. $m_1 > 0$, $n_1 = 0$. This case is symmetric to Case 4.
	
	($\Rightarrow$). If $f_k(\bar{u}_m)\mathrel{\lhd} f_k(\bar{w}_r)$, then we have $q^+_{\bar{u}_m} \in Th^+_\mathcal{L}([\mathcal{M}],
	\bar{u}_m) = Th^+_\mathcal{L}(\mathcal{N},
	f_k(\bar{u}_m))$ by $(i)_k$, but then, by Lemma
	\ref{L:monotonicity}, preservation of $\mathcal{L}$ under
	bi-asimulations, and $(i)_k$ again, we further get that
	$$
	q^+_{\bar{u}_m} \in
	Th^+_\mathcal{L}(\mathcal{N}, f_k(\bar{w}_r)) =
	Th^+_\mathcal{L}([\mathcal{M}], \bar{w}_r),
	$$
	whence we
	must have $\bar{u}_m\mathrel{\prec^{un}_w}\bar{w}_r$ by the definition of $[\mathcal{M}]$.
	
	In this way, $(ii)_k$ is shown to hold and the Induction Step is proven.
	
	We now define:
	\begin{align*}
		f: = \bigcup\{ f_k\mid k < \omega \}
	\end{align*}
	We will show that $f$ is an $\mathcal{L}$-elementary embedding of
	$[\mathcal{M}]$ into $\mathcal{N}$. It is
	obvious that $f: W^{un}_w \to U$ is an injection,
	being a union of the countable increasing chain of injections such
	that the sequence of the domains of these injections covers all of
	$W^{un}_w$.
	
	It remains to check that the conditions of Definition
	\ref{D:embedding} are satisfied by $f$. As for
	condition \eqref{E:c1}, assume that $\bar{u}_m,\bar{w}_k \in
	W^{un}_w$. Then we have
	\begin{align*}
		\bar{u}_m\mathrel{\prec^{un}_w}\bar{w}_k &\Leftrightarrow f_{max(m,k)}(\bar{u}_m)\mathrel{\lhd}f_{max(m,k)}(\bar{w}_k)&&\text{(by $(ii)_{max(m,k)}$)}\\
		&\Leftrightarrow f(\bar{u}_m)\mathrel{\lhd}f(\bar{w}_k)&&\text{(by definition of $f$)}
	\end{align*}
	Finally, as for the condition \eqref{E:c2}, we
	observe that for
	any given $\bar{u}_m\in W^{un}_w$ and any $\phi \in
	L(\Theta_{\mathcal{M}})$ we have:
	\begin{align*}
		Th_{\mathcal{L}}([\mathcal{M}], \bar{u}_m) &= Th_{\mathcal{L}}(\mathcal{N}, f_m(\bar{u}_m)) &&\text{(by $(i)_m$)}\\
		&= Th_{\mathcal{L}}(\mathcal{N}, f(\bar{u}_m)) &&\text{(by definition of $f$)}
	\end{align*}
\end{proof}
More generally, extending the signature of a bi-unravelled $(\mathcal{M}, w) \in Pmod_\Theta$ to get $[\mathcal{M}]$ allows us to force many types of formulas holding and failing at every node of $[\mathcal{M}]$ while staying at $w$. This functionality even extends to arbitrary models for which a suitable elementary embedding of $[\mathcal{M}]$ into them is possible. More precisely, the following lemma holds:

\begin{lemma}\label{L:representation-emb}
	Let $\mathcal{L}$ be an abstract bi-intuitionistic logic which extends $\mathsf{BIL}$, is
	preserved under bi-asimulations, $\star$-compact, and has TUP, let $(\mathcal{M}, w) \in Pmod_\Theta$ be a bi-unravelled model, and let $f$ be an $\mathcal{L}$-elementary embedding  of $[\mathcal{M}]$ into
	$\mathcal{N}$. Then, for every $n > 0$, every $\bar{w}_n \in W^{un}_w$, and every $1 \leq k \leq n$ there exist binary formula schemas $\phi^k_{\bar{w}_n}(\underline{\hspace{0.3cm}}_1,\underline{\hspace{0.3cm}}_2), \psi^k_{\bar{w}_n}(\underline{\hspace{0.3cm}}_1,\underline{\hspace{0.3cm}}_2), \theta^k_{\bar{w}_n}(\underline{\hspace{0.3cm}}_1,\underline{\hspace{0.3cm}}_2), \tau^k_{\bar{w}_n}(\underline{\hspace{0.3cm}}_1,\underline{\hspace{0.3cm}}_2) \in BIL(\Theta_{\mathcal{M}})$, and a function 
	$$
	F:\{\phi^k_{\bar{w}_n}, \psi^k_{\bar{w}_n}, \theta^k_{\bar{w}_n}, \tau^k_{\bar{w}_n}\mid \bar{w}_n \in W^{un}_w,\, 1 \leq k \leq n\} \to \{+, -\}
	$$
	such that, if $1 \leq k \leq n$ , then, for every $\Sigma \supseteq \Theta_{\mathcal{M}}$, every $\Sigma$-expansion $\mathcal{N}'$ of $\mathcal{N}$, and all $\alpha, \beta \in L(\Sigma)$,  we have:
	\begin{enumerate}
		\item If $F(\phi^k_{\bar{w}_n}) = +$ (resp. $F(\phi^k_{\bar{w}_n}) = -$) and $\mathcal{N}', f(\bar{w}_k)\models_{\mathcal{L}} \phi^k_{\bar{w}_n}(\alpha, \beta)$ (resp. $\mathcal{N}', f(\bar{w}_k)\not\models_{\mathcal{L}} \phi^k_{\bar{w}_n}(\alpha, \beta)$), then $\mathcal{N}', f(\bar{w}_n)\not\models_{\mathcal{L}} \alpha\to \beta$.
		
		\item If $F(\psi^k_{\bar{w}_n}) = +$ (resp. $F(\psi^k_{\bar{w}_n}) = -$) and $\mathcal{N}', f(\bar{w}_k)\models_{\mathcal{L}} \psi^k_{\bar{w}_n}(\alpha, \beta)$ (resp. $\mathcal{N}', f(\bar{w}_k)\not\models_{\mathcal{L}} \psi^k_{\bar{w}_n}(\alpha, \beta)$), then $\mathcal{N}', f(\bar{w}_n)\models_{\mathcal{L}} \alpha \ll \beta$.
		
		\item If $F(\theta^k_{\bar{w}_n}) = +$ (resp. $F(\theta^k_{\bar{w}_n}) = -$) and $\mathcal{N}', f(\bar{w}_k)\models_{\mathcal{L}} \theta^k_{\bar{w}_n}(\alpha, \beta)$ (resp. $\mathcal{N}', f(\bar{w}_k)\not\models_{\mathcal{L}} \theta^k_{\bar{w}_n}(\alpha, \beta)$), then $\mathcal{N}', f(\bar{w}_n)\models_{\mathcal{L}} \alpha\to \beta$.
		
		\item If $F(\tau^k_{\bar{w}_n}) = +$ (resp. $F(\tau^k_{\bar{w}_n}) = -$) and $\mathcal{N}', f(\bar{w}_k)\models_{\mathcal{L}} \tau^k_{\bar{w}_n}(\alpha, \beta)$ (resp. $\mathcal{N}', f(\bar{w}_k)\not\models_{\mathcal{L}} \tau^k_{\bar{w}_n}(\alpha, \beta)$), then $\mathcal{N}', f(\bar{w}_n)\not\models_{\mathcal{L}} \alpha\ll \beta$.
	\end{enumerate}   
\end{lemma}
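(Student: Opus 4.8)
The plan is to build the four families of schemas simultaneously by a single downward recursion on $k$ (from $k=n$ down to $k=1$), using one uniform ``navigation step'' to pass from level $k+1$ to level $k$, and to let $F$ record at each level whether that step moves forward or backward along the chain. Two structural facts drive everything. First, by condition \eqref{E:c1} of Definition~\ref{D:embedding} together with the definition of $\rho^{un}_w$, the images of consecutive initial segments satisfy $f(\bar{w}_k)\lhd f(\bar{w}_{k+1})$ precisely when $w_k\prec w_{k+1}$ (Case~A) and $f(\bar{w}_{k+1})\lhd f(\bar{w}_k)$ precisely when $w_k\succ w_{k+1}$ (Case~B); since adjacent nodes of a tuple in $W^{un}_w$ are distinct, exactly one case occurs. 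Second, as $f$ is $\mathcal{L}$-elementary and $\mathsf{BIL}\sqsubseteq\mathcal{L}$, the $BIL$-theories of $f(\bar{u})$ and of $([\mathcal{M}],\bar{u})$ agree, so by the definition of $[\mathcal{M}]$ the atom $q^+_{\bar{w}_{k+1}}$ is true and $q^-_{\bar{w}_{k+1}}$ is false at $f(\bar{w}_{k+1})$. Because $\mathcal{N}'$ only expands $\mathcal{N}$, its frame and its $\Theta_{\mathcal{M}}$-valuation are unchanged, so both facts persist in $\mathcal{N}'$, and each slotted formula lands in $L(\Sigma)$ by the Closure property.

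For the base case $k=n$ I take the target connective itself: $\theta^n_{\bar{w}_n}=\phi^n_{\bar{w}_n}:=\underline{\hspace{0.3cm}}_1\to\underline{\hspace{0.3cm}}_2$ and $\psi^n_{\bar{w}_n}=\tau^n_{\bar{w}_n}:=\underline{\hspace{0.3cm}}_1\ll\underline{\hspace{0.3cm}}_2$, with $F(\theta^n)=F(\psi^n):=+$ and $F(\phi^n)=F(\tau^n):=-$; here $f(\bar{w}_k)=f(\bar{w}_n)$ and the four required implications are trivial. For the recursion step, writing $N^{k+1}$ for the schema already built at level $k+1$ in any one of the families and $\sigma:=F(N^{k+1})$, I set
\[
N^{k}:=\begin{cases}
q^+_{\bar{w}_{k+1}}\to N^{k+1}, & \text{Case A, }\sigma=+,\\
N^{k+1}\to q^-_{\bar{w}_{k+1}}, & \text{Case A, }\sigma=-,\\
q^+_{\bar{w}_{k+1}}\ll N^{k+1}, & \text{Case B, }\sigma=+,\\
N^{k+1}\ll q^-_{\bar{w}_{k+1}}, & \text{Case B, }\sigma=-,
\end{cases}
\]
and declare $F(N^{k}):=+$ in the two Case-A clauses and $F(N^{k}):=-$ in the two Case-B clauses.

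Each clause is justified by instantiating a single quantifier at the one node $f(\bar{w}_{k+1})$, whose relevant $q$-value is fixed by elementarity, so that the sign-condition imposed on $N^{k}$ at $f(\bar{w}_k)$ yields exactly the sign-condition on $N^{k+1}$ at $f(\bar{w}_{k+1})$ required by $\sigma$. In Case~A with $\sigma=+$, holding of $q^+_{\bar{w}_{k+1}}\to N^{k+1}$ at $f(\bar{w}_k)$, instantiated at the successor $f(\bar{w}_{k+1})$ where $q^+_{\bar{w}_{k+1}}$ is true, forces $N^{k+1}$ to hold there; in Case~A with $\sigma=-$, holding of $N^{k+1}\to q^-_{\bar{w}_{k+1}}$, instantiated where $q^-_{\bar{w}_{k+1}}$ is false, forces $N^{k+1}$ to fail there. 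Dually, reading the failure of a co-implication as a universal statement over $\lhd$-predecessors and instantiating it at the predecessor $f(\bar{w}_{k+1})$: failure of $q^+_{\bar{w}_{k+1}}\ll N^{k+1}$ forces $N^{k+1}$ to hold at $f(\bar{w}_{k+1})$, and failure of $N^{k+1}\ll q^-_{\bar{w}_{k+1}}$ forces it to fail there. A downward induction on $k$ now chains these one-step implications to the base case, yielding statements (1)--(4).

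I expect the genuine difficulty to be the last clause --- forcing a formula to \emph{fail} at a $\lhd$-predecessor (Case~B, $\sigma=-$), which is precisely the new phenomenon created by the backward-looking co-implication. A naive existential co-implication would only hand back an uncontrolled backward witness, at which the coding atoms off the image of $f$ need not behave as intended; the fix is to use the \emph{failure} of $N^{k+1}\ll q^-_{\bar{w}_{k+1}}$ as a universal over predecessors and to instantiate it at the image node $f(\bar{w}_{k+1})$, where $q^-_{\bar{w}_{k+1}}$ is false by elementarity. Ensuring that every reduction step is an instantiation at an image node, never relying on off-image valuations, is what makes the whole scheme robust under passage to an arbitrary expansion $\mathcal{N}'$ and arbitrary $\alpha,\beta\in L(\Sigma)$.
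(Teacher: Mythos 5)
Your proposal is correct and takes essentially the same route as the paper's own proof: the same downward induction (the paper phrases it as induction on $t := n-k$), identical base-case schemas and sign assignments, and the identical four-clause recursion prefixing $q^+_{\bar{w}_{k+1}} \to {}$, ${} \to q^-_{\bar{w}_{k+1}}$, $q^+_{\bar{w}_{k+1}} \ll {}$, or ${} \ll q^-_{\bar{w}_{k+1}}$ according to the direction of the chain step and the inherited sign, with each step justified by instantiating at the image node $f(\bar{w}_{k+1})$, where the values of $q^\pm_{\bar{w}_{k+1}}$ are pinned down by the embedding and persist under expansion --- exactly the paper's Cases 1.1--2.2. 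Your uniform treatment of the four families via a single symbol $N^{k}$ is only a notational compression of the paper's $\chi \in \{\phi,\psi,\theta,\tau\}$.
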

\begin{proof}
	We fix a $\bar{w}_n \in W^{un}_w$ and argue by induction on $t := n - k$ so that we have $0 \leq t \leq n - 1$.
	
	\textit{Induction Basis}. $t = 0$. Then $k = n$ and we set:
	\begin{align*}
		\phi^n_{\bar{w}_n}(\alpha, \beta) &:= \alpha\to \beta &&F(\phi^n_{\bar{w}_n}) = -\\
		\psi^n_{\bar{w}_n}(\alpha, \beta) &:= \alpha\ll \beta &&F(\psi^n_{\bar{w}_n}) = +\\
		\theta^n_{\bar{w}_n}(\alpha, \beta) &:= \alpha\to \beta &&F(\theta^n_{\bar{w}_n}) = +\\
		\tau^n_{\bar{w}_n}(\alpha, \beta) &:= \alpha\ll \beta &&F(\tau^n_{\bar{w}_n}) = -	
	\end{align*}
	It is evident that the statement of the Lemma holds under these settings.
	
	\textit{Induction Step}. Let $t = l + 1$, so that $k = n - l - 1$. Applying the Induction Hypothesis for $t = l$ with $k = n - l$, we find the formula schemas $\phi^{n - l}_{\bar{w}_n}$, $\psi^{n - l}_{\bar{w}_n}$, $\theta^{n - l}_{\bar{w}_n}$, and $\tau^{n - l}_{\bar{w}_n}$, and $F(\phi^{n - l}_{\bar{w}_n})$, $F(\psi^{n - l}_{\bar{w}_n})$, $F(\theta^{n - l}_{\bar{w}_n})$, and $F(\tau^{n - l}_{\bar{w}_n})$ satisfying the statement of the Lemma. We then have to distinguish between two cases.
	
	\textit{Case 1}. We have $w_{n - l -1}\mathrel{\prec}w_{n - l}$, thus also $\bar{w}_{n - l -1}\mathrel{\prec^{un}_w}\bar{w}_{n - l}$. We then set $F(\phi^{n - l-1}_{\bar{w}_n}) = F(\psi^{n - l-1}_{\bar{w}_n}) = F(\theta^{n - l-1}_{\bar{w}_n}) = F(\tau^{n - l-1}_{\bar{w}_n}) := +$, and, as for the formulas, we set that
	\begin{align*}
		\chi^{n - l-1}_{\bar{w}_n}(\alpha, \beta) := \begin{cases}
			q^+_{\bar{w}_{n - l}} \to \chi^{n - l}_{\bar{w}_n}(\alpha, \beta),\text{ if $F(\chi^{n - l}_{\bar{w}_n}) = +$}\\
		\chi^{n - l}_{\bar{w}_n}(\alpha, \beta)\to q^-_{\bar{w}_{n - l}},\text{ if $F(\chi^{n - l}_{\bar{w}_n}) = -$}	
		\end{cases}
	\end{align*}
	for every $\chi \in \{\phi, \psi, \theta, \tau\}$. We show that our settings satisfy the Lemma. 
	
	\textit{Case 1.1}. Let $\chi \in \{\phi, \psi, \theta, \tau\}$ and assume that $F(\chi^{n - l}_{\bar{w}_n}) = +$. Then, if $\mathcal{N}', f(\bar{w}_{n - l-1})\models_{\mathcal{L}} 	q^+_{\bar{w}_{n - l}} \to \chi^{n - l}_{\bar{w}_n}(\alpha, \beta)$, note that we must have both  $f(\bar{w}_{n - l -1})\mathrel{\lhd}f(\bar{w}_{n - l})$, by $\bar{w}_{n - l -1}\mathrel{\prec^{un}_w}\bar{w}_{n - l}$ and condition \eqref{E:c1} of Definition \ref{D:embedding}, and also $\mathcal{N}', f(\bar{w}_{n - l})\models_{\mathcal{L}} q^+_{\bar{w}_{n - l}}$, by $[\mathcal{M}], \bar{w}_{n - l}\models_{\mathcal{L}}  q^+_{\bar{w}_{n - l}}$ and condition \eqref{E:c2} of Definition \ref{D:embedding}. Therefore, we must also have $\mathcal{N}', f(\bar{w}_{n - l})\models_{\mathcal{L}} 	\chi^{n - l}_{\bar{w}_n}(\alpha, \beta)$, so that we can apply Induction Hypothesis for $l$.
	
	\textit{Case 1.2}. Let $\chi \in \{\phi, \psi, \theta, \tau\}$ and assume that $F(\chi^{n - l}_{\bar{w}_n}) = -$. Then, if $\mathcal{N}', f(\bar{w}_{n - l-1})\models_{\mathcal{L}} 	 \chi^{n - l}_{\bar{w}_n}(\alpha, \beta)\to q^-_{\bar{w}_{n - l}}$, note that we must have both  $f(\bar{w}_{n - l -1})\mathrel{\lhd}f(\bar{w}_{n - l})$, by $\bar{w}_{n - l -1}\mathrel{\prec^{un}_w}\bar{w}_{n - l}$ and condition \eqref{E:c1} of Definition \ref{D:embedding}, and also $\mathcal{N}', f(\bar{w}_{n - l})\not\models_{\mathcal{L}} q^-_{\bar{w}_{n - l}}$, by $[\mathcal{M}], \bar{w}_{n - l}\not\models_{\mathcal{L}}  q^-_{\bar{w}_{n - l}}$ and condition \eqref{E:c2} of Definition \ref{D:embedding}. Therefore, we must also have $\mathcal{N}', f(\bar{w}_{n - l})\not\models_{\mathcal{L}} 	\chi^{n - l}_{\bar{w}_n}(\alpha, \beta)$, so that we can apply Induction Hypothesis for $l$.
	
	\textit{Case 2}. We have $w_{n - l -1}\mathrel{\succ}w_{n - l}$, thus also $\bar{w}_{n - l}\mathrel{\prec^{un}_w}\bar{w}_{n - l -1}$. We then set $F(\phi^{n - l-1}_{\bar{w}_n}) = F(\psi^{n - l-1}_{\bar{w}_n}) = F(\theta^{n - l-1}_{\bar{w}_n}) = F(\tau^{n - l-1}_{\bar{w}_n}) := -$, and, as for the formulas, we set that
	\begin{align*}
		\chi^{n - l-1}_{\bar{w}_n}(\alpha, \beta) := \begin{cases}
			q^+_{\bar{w}_{n - l}} \ll \chi^{n - l}_{\bar{w}_n}(\alpha, \beta),\text{ if $F(\chi^{n - l}_{\bar{w}_n}) = +$}\\
			\chi^{n - l}_{\bar{w}_n}(\alpha, \beta)\ll q^-_{\bar{w}_{n - l}},\text{ if $F(\chi^{n - l}_{\bar{w}_n}) = -$}	
		\end{cases}
	\end{align*}
for every $\chi \in \{\phi, \psi, \theta, \tau\}$. We show that our settings satisfy the Lemma. 

		\textit{Case 2.1}. Let $\chi \in \{\phi, \psi, \theta, \tau\}$ and assume that $F(\chi^{n - l}_{\bar{w}_n}) = +$. Then, if $\mathcal{N}', f(\bar{w}_{n - l-1})\not\models_{\mathcal{L}} 	q^+_{\bar{w}_{n - l}} \ll \chi^{n - l}_{\bar{w}_n}(\alpha, \beta)$, note that we must have both  $f(\bar{w}_{n - l})\mathrel{\lhd}f(\bar{w}_{n - l -1})$, by $\bar{w}_{n - l}\mathrel{\prec^{un}_w}\bar{w}_{n - l -1}$ and condition \eqref{E:c1} of Definition \ref{D:embedding}, and also $\mathcal{N}', f(\bar{w}_{n - l})\models_{\mathcal{L}} q^+_{\bar{w}_{n - l}}$, by $[\mathcal{M}], \bar{w}_{n - l}\models_{\mathcal{L}}  q^+_{\bar{w}_{n - l}}$ and condition \eqref{E:c2} of Definition \ref{D:embedding}. Therefore, we must also have $\mathcal{N}', f(\bar{w}_{n - l})\models_{\mathcal{L}} 	\chi^{n - l}_{\bar{w}_n}(\alpha, \beta)$, so that we can apply Induction Hypothesis for $l$.
	
	\textit{Case 2.2}. Let $\chi \in \{\phi, \psi, \theta, \tau\}$ and assume that $F(\chi^{n - l}_{\bar{w}_n}) = -$. Then, if $\mathcal{N}', f(\bar{w}_{n - l-1})\not\models_{\mathcal{L}} 	 \chi^{n - l}_{\bar{w}_n}(\alpha, \beta)\ll q^-_{\bar{w}_{n - l}}$, note that we must have both $f(\bar{w}_{n - l})\mathrel{\lhd}f(\bar{w}_{n - l -1})$, by $\bar{w}_{n - l}\mathrel{\prec^{un}_w}\bar{w}_{n - l -1}$ and condition \eqref{E:c1} of Definition \ref{D:embedding}, and also $\mathcal{N}', f(\bar{w}_{n - l})\not\models_{\mathcal{L}} q^-_{\bar{w}_{n - l}}$, by $[\mathcal{M}], \bar{w}_{n - l}\not\models_{\mathcal{L}}  q^-_{\bar{w}_{n - l}}$ and condition \eqref{E:c2} of Definition \ref{D:embedding}. Therefore, we must also have $\mathcal{N}', f(\bar{w}_{n - l})\not\models_{\mathcal{L}} 	\chi^{n - l}_{\bar{w}_n}(\alpha, \beta)$ so that we can apply Induction Hypothesis for $l$.
\end{proof}
Furthermore, if in the above Lemma we replace $\mathcal{N}$ with $[\mathcal{M}]$, then the conditionals in the lemma can be strengthened to bi-conditionals:
 \begin{lemma}\label{L:representation-m}
 	Let $\mathcal{L}$ be an abstract bi-intuitionistic logic which extends $\mathsf{BIL}$, is
 	preserved under bi-asimulations, $\star$-compact, and has TUP, and let $(\mathcal{M}, w) \in Pmod_\Theta$ be a bi-unravelled model. Then, for every $n > 0$, every $\bar{w}_n \in W^{un}_w$, and every $1 \leq k \leq n$ let the binary formula schemas $\phi^k_{\bar{w}_n}(\underline{\hspace{0.3cm}}_1,\underline{\hspace{0.3cm}}_2), \psi^k_{\bar{w}_n}(\underline{\hspace{0.3cm}}_1,\underline{\hspace{0.3cm}}_2), \theta^k_{\bar{w}_n}(\underline{\hspace{0.3cm}}_1,\underline{\hspace{0.3cm}}_2), \tau^k_{\bar{w}_n}(\underline{\hspace{0.3cm}}_1,\underline{\hspace{0.3cm}}_2) \in BIL(\Theta_{\mathcal{M}})$, and the function 
 	$$
 	F:\{\phi^k_{\bar{w}_n}, \psi^k_{\bar{w}_n}, \theta^k_{\bar{w}_n}, \tau^k_{\bar{w}_n}\mid \bar{w}_n \in W^{un}_w,\, 1 \leq k \leq n\} \to \{1, -1\}
 	$$
 	be defined as in Lemma \ref{L:representation-emb}. Then, for every $\Sigma \supseteq \Theta_{\mathcal{M}}$, every $\Sigma$-expansion $\mathcal{M}'$ of $[\mathcal{M}]$, and all $\alpha, \beta \in L(\Sigma)$,  we have:
 	\begin{enumerate}
 		\item If $F(\phi^k_{\bar{w}_n}) = +$ (resp. $F(\phi^k_{\bar{w}_n}) = -$) then $\mathcal{M}', \bar{w}_k\models_{\mathcal{L}} \phi^k_{\bar{w}_n}(\alpha, \beta)$ (resp. $\mathcal{M}', \bar{w}_k\not\models_{\mathcal{L}} \phi^k_{\bar{w}_n}(\alpha, \beta)$) iff $\mathcal{M}', \bar{w}_n\not\models_{\mathcal{L}} \alpha\to \beta$.
 		
 		\item If $F(\psi^k_{\bar{w}_n}) = +$ (resp. $F(\psi^k_{\bar{w}_n}) = -$), then $\mathcal{M}', \bar{w}_k\models_{\mathcal{L}} \psi^k_{\bar{w}_n}(\alpha, \beta)$ (resp. $\mathcal{M}', \bar{w}_k\not\models_{\mathcal{L}} \psi^k_{\bar{w}_n}(\alpha, \beta)$) iff $\mathcal{M}', \bar{w}_n\models_{\mathcal{L}} \alpha\ll \beta$.
 		
 		\item If $F(\theta^k_{\bar{w}_n}) = +$ (resp. $F(\theta^k_{\bar{w}_n}) = -$), then $\mathcal{M}', \bar{w}_k\models_{\mathcal{L}} \theta^k_{\bar{w}_n}(\alpha, \beta)$ (resp. $\mathcal{N}', \bar{w}_k\not\models_{\mathcal{L}} \theta^k_{\bar{w}_n}(\alpha, \beta)$) iff $\mathcal{M}', \bar{w}_n\models_{\mathcal{L}} \alpha\to \beta$.
 		
 		\item If $F(\tau^k_{\bar{w}_n}) = +$ (resp. $F(\tau^k_{\bar{w}_n}) = -$), then $\mathcal{M}', \bar{w}_k\models_{\mathcal{L}} \tau^k_{\bar{w}_n}(\alpha, \beta)$ (resp. $\mathcal{M}', \bar{w}_k\not\models_{\mathcal{L}} \tau^k_{\bar{w}_n}(\alpha, \beta)$) iff $\mathcal{M}', \bar{w}_n\not\models_{\mathcal{L}} \alpha\ll\beta$.
 	\end{enumerate}   
 \end{lemma}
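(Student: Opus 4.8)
The plan is to observe first that one direction of every biconditional is already contained in Lemma \ref{L:representation-emb}. Indeed, taking $\mathcal{N} := [\mathcal{M}]$ and $f := id_{W^{un}_w}$, which is an $\mathcal{L}$-elementary embedding of $[\mathcal{M}]$ into itself by the $\mathcal{L}$-analogue of Lemma \ref{L:embedding}, the four clauses of Lemma \ref{L:representation-emb} give precisely the left-to-right implications of clauses 1--4 here, since $f(\bar{w}_k) = \bar{w}_k$ and $f(\bar{w}_n) = \bar{w}_n$. Hence it remains only to supply the converse implications, and I would obtain them by an induction on $t := n - k$ running exactly parallel to the induction that defined the schemas in Lemma \ref{L:representation-emb}.

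The two facts that make the converse work, and that are genuinely new relative to Lemma \ref{L:representation-emb}, are the following. First, since $\mathcal{M}'$ is a $\Sigma$-expansion of $[\mathcal{M}]$ and the coding atoms $q^+_{\bar{u}}, q^-_{\bar{u}}$ lie in $\Theta_{\mathcal{M}} \subseteq \Sigma$, their interpretation is exactly the one fixed in the definition of $[\mathcal{M}]$: we have $\mathcal{M}', v \models_{\mathcal{L}} q^+_{\bar{u}}$ iff $\bar{u}\mathrel{\prec^{un}_w}v$, and $\mathcal{M}', v \not\models_{\mathcal{L}} q^-_{\bar{u}}$ iff $v\mathrel{\prec^{un}_w}\bar{u}$. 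Second, every $\mathcal{L}$-formula is monotone along $\prec^{un}_w$ by Lemma \ref{L:monotonicity}, since $\mathcal{L}$ is preserved under bi-asimulations. In particular $q^+_{\bar{w}_{n-l}}$ holds at $\bar{w}_{n-l}$ and $q^-_{\bar{w}_{n-l}}$ fails at $\bar{w}_{n-l}$, by reflexivity of $\prec^{un}_w$.

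In the induction basis ($t = 0$, $k = n$) the schemas are literally $\alpha\to\beta$ and $\alpha\ll\beta$, so the required biconditionals are tautologous. For the induction step I would split, as in Lemma \ref{L:representation-emb}, according to whether $w_{n-l-1}\mathrel{\prec}w_{n-l}$ (Case 1, giving $\bar{w}_{n-l-1}\mathrel{\prec^{un}_w}\bar{w}_{n-l}$ and a schema of the form $q^+_{\bar{w}_{n-l}}\to\chi^{n-l}_{\bar{w}_n}$ or $\chi^{n-l}_{\bar{w}_n}\to q^-_{\bar{w}_{n-l}}$) or $w_{n-l-1}\mathrel{\succ}w_{n-l}$ (Case 2, giving $\bar{w}_{n-l}\mathrel{\prec^{un}_w}\bar{w}_{n-l-1}$ and the co-implicational schemas). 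In Case 1 with $F(\chi^{n-l}_{\bar{w}_n}) = +$, for instance, I would show that $\mathcal{M}', \bar{w}_{n-l-1}\models_{\mathcal{L}} q^+_{\bar{w}_{n-l}}\to\chi^{n-l}_{\bar{w}_n}(\alpha,\beta)$ is equivalent to $\mathcal{M}', \bar{w}_{n-l}\models_{\mathcal{L}}\chi^{n-l}_{\bar{w}_n}(\alpha,\beta)$: the forward direction uses that $\bar{w}_{n-l}$ is a successor of $\bar{w}_{n-l-1}$ satisfying $q^+_{\bar{w}_{n-l}}$, while the backward direction uses that any successor $v$ of $\bar{w}_{n-l-1}$ satisfying $q^+_{\bar{w}_{n-l}}$ must be a $\prec^{un}_w$-successor of $\bar{w}_{n-l}$, so that monotonicity propagates $\chi^{n-l}_{\bar{w}_n}$ from $\bar{w}_{n-l}$ to $v$. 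The case $F(\chi^{n-l}_{\bar{w}_n}) = -$ is dual, using that $q^-_{\bar{w}_{n-l}}$ fails exactly at the $\prec^{un}_w$-predecessors of $\bar{w}_{n-l}$, and Case 2 is symmetric, replacing $\to$ by $\ll$ and successors by predecessors. Composing the resulting local biconditional with the induction hypothesis for $t = l$ then yields the full biconditional for $t = l+1$.

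I expect the main obstacle to be bookkeeping rather than conceptual: one must track the sign $F(\chi^{n-l}_{\bar{w}_n}) \in \{+,-\}$ in order to know which of the two schema shapes applies and whether the target local condition is a $\models$ or a $\not\models$, and one must check in each of the four resulting combinations that the precise interpretation of the coding atom pins down exactly the set of worlds to which Lemma \ref{L:monotonicity} is then applied. Once this interplay between the coding atoms and monotonicity is set up, each subcase reduces to a short verification, and the two substantive ingredients, the fixed valuation of $q^+_{\bar{u}}, q^-_{\bar{u}}$ and monotonicity along $\prec^{un}_w$, are precisely what upgrades the one-sided implications of Lemma \ref{L:representation-emb} into the biconditionals claimed here.
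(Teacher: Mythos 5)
Your proposal is correct and takes essentially the same route as the paper's own proof: the forward implications come from Lemma \ref{L:representation-emb} applied to the identity embedding of $[\mathcal{M}]$ into itself, and the converses are established by the same induction on $t = n-k$, resting exactly on the two ingredients the paper uses, namely the pinned-down valuation of the coding atoms $q^{+}_{\bar{u}}, q^{-}_{\bar{u}}$ in any $\Sigma$-expansion of $[\mathcal{M}]$ and monotonicity along $\prec^{un}_w$ via Lemma \ref{L:monotonicity}. The only cosmetic difference is that you package each induction step as a local biconditional between levels $n-l-1$ and $n-l$ composed with the induction hypothesis, whereas the paper verifies just the needed direction in each of its four subcases.
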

 \begin{proof}
 	Since $id_{W^{un}_w}$ is, by the Isomorphism Property, an $\mathcal{L}$-elementary embedding of $[\mathcal{M}]$ into itself, we have the ($\Rightarrow$)-part of the Lemma by Lemma \ref{L:representation-emb}. As for the ($\Leftarrow$)-part, we, again, fix a $\bar{w}_n \in W^{un}_w$ and argue by induction on $t := n - k$ so that we have $0 \leq t \leq n - 1$. Induction basis is obvious, given the definitions in the proof of Lemma \ref{L:representation-emb}.
 	
 	\textit{Induction Step}. Let $t = l + 1$, so that $k = n - l - 1$. Then Induction Hypothesis for $t = l$ with $k = n - l$, tells us that the formulas $\phi^{n - l}_{\bar{w}_n}$, $\psi^{n - l}_{\bar{w}_n}$, $\theta^{n - l}_{\bar{w}_n}$, and $\tau^{n - l}_{\bar{w}_n}$, as well as $F(\phi^{n - l}_{\bar{w}_n})$, $F(\psi^{n - l}_{\bar{w}_n})$, $F(\theta^{n - l}_{\bar{w}_n})$, and $F(\tau^{n - l}_{\bar{w}_n})$ satisfy the statement of the Lemma. We then have to distinguish between two cases.
 	
 	\textit{Case 1}. We have $w_{n - l -1}\mathrel{\prec}w_{n - l}$, thus also $\bar{w}_{n - l -1}\mathrel{\prec^{un}_w}\bar{w}_{n - l}$. We then have that $F(\phi^{n - l-1}_{\bar{w}_n}) = F(\psi^{n - l-1}_{\bar{w}_n}) = F(\theta^{n - l-1}_{\bar{w}_n}) = F(\tau^{n - l-1}_{\bar{w}_n}) = +$, and, as for the formulas, that
 	\begin{align*}
 		\chi^{n - l-1}_{\bar{w}_n}(\alpha, \beta) = \begin{cases}
 			q^+_{\bar{w}_{n - l}} \to \chi^{n - l}_{\bar{w}_n}(\alpha, \beta),\text{ if $F(\chi^{n - l}_{\bar{w}_n}) = +$}\\
 			\chi^{n - l}_{\bar{w}_n}(\alpha, \beta)\to q^-_{\bar{w}_{n - l}},\text{ if $F(\chi^{n - l}_{\bar{w}_n}) = -$}	
 		\end{cases}
 	\end{align*}
 	for every $\chi \in \{\phi, \psi, \theta, \tau\}$. We show that these settings satisfy the ($\Leftarrow$)-part of the Lemma. 
 	
 	\textit{Case 1.1}. Let $\chi \in \{\phi, \psi, \theta, \tau\}$ and assume that $F(\chi^{n - l}_{\bar{w}_n}) = +$. Assume that the condition associated with $\chi^{n - l-1}_{\bar{w}_n}(\alpha, \beta)$ by the Lemma, holds in $\mathcal{M}'$. Then, by Induction Hypothesis, we must have $\mathcal{M}', \bar{w}_{n - l}\models_{\mathcal{L}} 	\chi^{n - l}_{\bar{w}_n}(\alpha, \beta)$. Now, let $v \in W^{un}_w$ be such that $\bar{w}_{n - l -1}\mathrel{\prec^{un}_w}v$ and $\mathcal{M}', v\models_{\mathcal{L}} 	q^+_{\bar{w}_{n - l}}$. By the definition of $[\mathcal{M}]$, it follows that $\bar{w}_{n - l}\mathrel{\prec^{un}_w}v$, but then we must also have $\mathcal{M}', v\models_{\mathcal{L}} 	\chi^{n - l}_{\bar{w}_n}(\alpha, \beta)$, by Lemma \ref{L:monotonicity}.
 	
 	\textit{Case 1.2}. Let $\chi \in \{\phi, \psi, \theta, \tau\}$ and assume that $F(\chi^{n - l}_{\bar{w}_n}) = -$. Assume that the condition associated with $\chi^{n - l-1}_{\bar{w}_n}(\alpha, \beta)$ by the Lemma, holds in $\mathcal{M}'$. Then, by Induction Hypothesis, we must have $\mathcal{M}', \bar{w}_{n - l}\not\models_{\mathcal{L}} 	\chi^{n - l}_{\bar{w}_n}(\alpha, \beta)$. Now, let $v \in W^{un}_w$ be such that $\bar{w}_{n - l -1}\mathrel{\prec^{un}_w}v$ and $\mathcal{M}', v\not\models_{\mathcal{L}} 	q^-_{\bar{w}_{n - l}}$. By the definition of $[\mathcal{M}]$, it follows that $v\mathrel{\prec^{un}_w}\bar{w}_{n - l}$, but then we must also have $\mathcal{M}', v\not\models_{\mathcal{L}} 	\chi^{n - l}_{\bar{w}_n}(\alpha, \beta)$, by Lemma \ref{L:monotonicity}.
 	
 	\textit{Case 2}. We have $w_{n - l -1}\mathrel{\succ}w_{n - l}$, thus also $\bar{w}_{n - l}\mathrel{\prec^{un}_w}\bar{w}_{n - l -1}$. We have then that $F(\phi^{n - l-1}_{\bar{w}_n}) = F(\psi^{n - l-1}_{\bar{w}_n}) = F(\theta^{n - l-1}_{\bar{w}_n}) = F(\tau^{n - l-1}_{\bar{w}_n}) := -$, and, as for the formulas, that
 	\begin{align*}
 		\chi^{n - l-1}_{\bar{w}_n}(\alpha, \beta) := \begin{cases}
 			q^+_{\bar{w}_{n - l}} \ll \chi^{n - l}_{\bar{w}_n}(\alpha, \beta),\text{ if $F(\chi^{n - l}_{\bar{w}_n}) = +$}\\
 			\chi^{n - l}_{\bar{w}_n}(\alpha, \beta)\ll q^-_{\bar{w}_{n - l}},\text{ if $F(\chi^{n - l}_{\bar{w}_n}) = -$}	
 		\end{cases}
 	\end{align*}
 	for every $\chi \in \{\phi, \psi, \theta, \tau\}$.  We show that these settings satisfy the ($\Leftarrow$)-part of the Lemma.
 	
 	\textit{Case 2.1}. Let $\chi \in \{\phi, \psi, \theta, \tau\}$ and assume that $F(\chi^{n - l}_{\bar{w}_n}) = +$. Assume that the condition associated with $\chi^{n - l-1}_{\bar{w}_n}(\alpha, \beta)$ by the Lemma, holds in $\mathcal{M}'$. Then, by Induction Hypothesis, we must have $\mathcal{M}', \bar{w}_{n - l}\models_{\mathcal{L}} 	\chi^{n - l}_{\bar{w}_n}(\alpha, \beta)$. Now, let $v \in W^{un}_w$ be such that $v\mathrel{\prec^{un}_w}\bar{w}_{n - l -1}$ and $\mathcal{M}', v\models_{\mathcal{L}} 	q^+_{\bar{w}_{n - l}}$. By the definition of $[\mathcal{M}]$, it follows that $\bar{w}_{n - l}\mathrel{\prec^{un}_w}v$, but then we must also have $\mathcal{M}', v\models_{\mathcal{L}} 	\chi^{n - l}_{\bar{w}_n}(\alpha, \beta)$, by Lemma \ref{L:monotonicity}.
 	
 	\textit{Case 2.2}. Let $\chi \in \{\phi, \psi, \theta, \tau\}$ and assume that $F(\chi^{n - l}_{\bar{w}_n}) = -$. Assume that the condition associated with $\chi^{n - l-1}_{\bar{w}_n}(\alpha, \beta)$ by the Lemma, holds in $\mathcal{M}'$. Then, by Induction Hypothesis, we must have $\mathcal{M}', \bar{w}_{n - l}\not\models_{\mathcal{L}} 	\chi^{n - l}_{\bar{w}_n}(\alpha, \beta)$. Now, let $v \in W^{un}_w$ be such that $v\mathrel{\prec^{un}_w}\bar{w}_{n - l -1}$ and $\mathcal{M}', v\not\models_{\mathcal{L}} 	q^-_{\bar{w}_{n - l}}$. By the definition of $[\mathcal{M}]$, it follows that $v\mathrel{\prec^{un}_w}\bar{w}_{n - l}$, but then we must also have $\mathcal{M}', v\not\models_{\mathcal{L}} 	\chi^{n - l}_{\bar{w}_n}(\alpha, \beta)$, by Lemma \ref{L:monotonicity}.
 \end{proof}

\begin{proposition}\label{L:saturation}
	Let $\mathcal{L}$ be an abstract bi-intuitionistic logic which extends $\mathsf{BIL}$, is
	preserved under bi-asimulations, $\star$-compact, and has TUP, and let $(\mathcal{M}, w) \in Pmod_\Theta$ be a bi-unravelled model. Then there exists a bi-unravelled model $(\mathcal{N}', w)\in Pmod_\Theta$ such that $\mathcal{M}\preccurlyeq_\mathcal{L} \mathcal{N}'$ and every $\mathcal{L}$-type of 
	$\mathcal{M}$ is realized in $\mathcal{N}'$.
\end{proposition}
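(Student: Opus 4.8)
The plan is to realize all $\mathcal{L}$-types of $\mathcal{M}$ in a single stroke: I will build, by $\star$-compactness, a witnessing model, convert it into an $\mathcal{L}$-elementary extension of $\mathcal{M}$ by means of the marker expansion $[\mathcal{M}]$ and Lemma \ref{L:lemma1}, and interpose a bi-unravelling so that the extension finally produced is again bi-unravelled. \emph{Setting up the target theory.} I fix, by the Axiom of Choice, an enumeration of all $\mathcal{L}$-types of $\mathcal{M}$ and, for each such type $\sigma$, a fresh propositional letter $r_\sigma$ disjoint from $\Theta_\mathcal{M}$; write $\Sigma$ for the resulting signature. Over $\Sigma$ I form a root theory $T = (T^+, T^-)$ whose positive and negative parts contain, respectively, all of $Th^+_\mathcal{L}([\mathcal{M}], w)$ and all of $Th^-_\mathcal{L}([\mathcal{M}], w)$ --- this is what forces the $\Theta_\mathcal{M}$-reduct of any model of $T$, pointed at the witness of its root, to share the $\mathcal{L}$-theory of $([\mathcal{M}], w)$, as Lemma \ref{L:lemma1} demands. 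To $T$ I adjoin, for each successor type $\sigma = (\Gamma, \Delta)$ of a node $\bar{w}_n$, clauses expressing that some $q^+_{\bar{w}_n}$-point carrying the marker $r_\sigma$ realizes $(\Gamma, \Delta)$; since such a witness has to be sought high up in the model rather than at $w$ itself, I pull each of these existence statements back to the root $w$ using the representation schemas $\phi^1_{\bar{w}_n}, \psi^1_{\bar{w}_n}, \theta^1_{\bar{w}_n}, \tau^1_{\bar{w}_n}$ of Lemma \ref{L:representation-m}, instantiated with formulas built from $r_\sigma$ and the members of $\Gamma \cup \Delta$. Predecessor types are handled symmetrically, with $q^-_{\bar{w}_n}$ and the co-implication schemas replacing their implication counterparts.

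\emph{Compactness.} I then check that $T$ is finitely $\mathcal{L}$-satisfiable. A finite subtheory mentions only finitely many types and, within each, only finite approximations $(\Gamma', \Delta')$; by Definition \ref{D:types} (equivalently Corollary \ref{C:types-formulas}) every such approximation is satisfied at some successor (resp. predecessor) of the relevant node of $\mathcal{M}$. Interpreting the finitely many markers $r_\sigma$ so as to single out such witnesses and appealing to Lemma \ref{L:representation-m} to see that the associated root encodings then hold at $w$, one obtains a $\Sigma$-expansion of $[\mathcal{M}]$ itself that $\mathcal{L}$-satisfies the finite subtheory. By $\star$-compactness $T$ is $\mathcal{L}$-satisfiable, say at $(\mathcal{N}, v)$.

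\emph{Restoring bi-unravelling and concluding.} I replace $(\mathcal{N}, v)$ by its bi-unravelling $(\mathcal{N}^{un}_v, v)$: by Lemma \ref{L:unravelling-asim} the two are bi-asimilar and agree, node for node, on $\mathcal{L}$-theories, so $\mathcal{N}^{un}_v$ still models $T$ at its root and still realizes the forced witnesses, while now being bi-unravelled. Since, by the combined positive and negative content of $T$, the $\Theta_\mathcal{M}$-reduct of $(\mathcal{N}^{un}_v, v)$ has exactly the $\mathcal{L}$-theory of $([\mathcal{M}], w)$ (Lemma \ref{L:embedding}.1), Lemma \ref{L:lemma1} supplies an $\mathcal{L}$-elementary embedding $f$ of $[\mathcal{M}]$ into that reduct with $f(w) = v$, and Lemma \ref{L:embedding}.6 promotes $f$ to an $\mathcal{L}$-elementary extension $[\mathcal{M}] \preccurlyeq_\mathcal{L} \mathcal{M}^*$ with $\mathcal{M}^* \cong (\mathcal{N}^{un}_v)\upharpoonright\Theta_\mathcal{M}$. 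As the latter has a bi-unravelling frame and isomorphism preserves frames, $\mathcal{M}^*$ is bi-unravelled, and I put $\mathcal{N}' := \mathcal{M}^*\upharpoonright\Theta$. Then $\mathcal{N}'$ is a bi-unravelled $\Theta$-model, $\mathcal{M} \preccurlyeq_\mathcal{L} \mathcal{N}'$ by Lemma \ref{L:embedding}.2, and transferring the witnesses through $f$ --- using Lemma \ref{L:type-realization} together with Lemma \ref{L:representation-emb} to read off that each forced witness indeed lies at a successor (resp. predecessor) of the image of its base node --- shows that every $\mathcal{L}$-type of $\mathcal{M}$ is realized in $\mathcal{N}'$.

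\emph{Main obstacle.} The delicate point is the encoding in the first step: turning ``type $\sigma$ is realized at a single point accessible from $\bar{w}_n$'', including its \emph{negative} demands and the insistence on one common witness for all of $\Gamma$ and $\Delta$ simultaneously, into a theory at the \emph{root} $w$ over the marker signature, in such a way that (i) finite fragments remain satisfiable in $[\mathcal{M}]$ and (ii) the full theory, once satisfied, still forces genuine single-point realization after the passage through bi-unravelling and Lemma \ref{L:lemma1}. This is precisely where the fresh markers $r_\sigma$ and the representation schemas of Lemmas \ref{L:representation-emb}--\ref{L:representation-m} must be combined with care; the remaining manipulations are routine bookkeeping with the reduct and isomorphism facts collected in Lemma \ref{L:embedding}.
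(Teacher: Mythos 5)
Your proposal reproduces the paper's architecture step for step: encode realization of each type at the \emph{root} of $[\mathcal{M}]$ via the schemas $\phi^1_{\bar v_k}, \psi^1_{\bar v_k}, \theta^1_{\bar v_k}, \tau^1_{\bar v_k}$ of Lemma \ref{L:representation-m}, check finite satisfiability inside expansions of $[\mathcal{M}]$ itself, apply $\star$-compactness, bi-unravel the resulting model (Lemma \ref{L:unravelling-asim}), feed its $\Theta_\mathcal{M}$-reduct to Lemma \ref{L:lemma1}, upgrade the embedding to an $\mathcal{L}$-elementary extension by Lemma \ref{L:embedding}.5--6, transfer the witnesses through the embedding using Lemma \ref{L:representation-emb}, and finally restrict to $\Theta$ via Lemmas \ref{L:type-realization} and \ref{L:embedding}.2. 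However, there is a genuine gap at exactly the point you flag as delicate and then leave unresolved: you allot a \emph{single} fresh letter $r_\sigma$ per type, whereas the encoding demonstrably requires \emph{two} letters of opposite character per type, as in the paper's $r^+_{\Gamma,\Delta,\bar v_k}, r^-_{\Gamma,\Delta,\bar v_k}$ (and $s^\pm_{\Gamma,\Delta,\bar v_k}$ for predecessor types), mimicking the pair $q^+_{\bar u_m}, q^-_{\bar u_m}$ of $[\mathcal{M}]$; the clauses are $\phi^1_{\bar v_k}(r^+,r^-)$ (one common witness above $\bar v_k$ where $r^+$ holds and $r^-$ fails), $\theta^1_{\bar v_k}(r^+,\gamma)$ for $\gamma\in\Gamma$, and $\theta^1_{\bar v_k}(\delta,r^-)$ for $\delta\in\Delta$, sorted into the two components of $\Xi$ by the polarity function $F$. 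With one monotone marker the negative demands cannot be encoded: valuations are upsets, so if you express ``$\delta$ fails at the witness'' as $(r_\sigma\wedge\delta)\to\bot$ at $\bar v_k$, finite satisfiability in $[\mathcal{M}]$ breaks down --- any admissible interpretation of $r_\sigma$ contains the whole upset of the finite-stage witness $\bar u_m$, and Definition \ref{D:types} does not prevent $\delta$ from holding strictly above $\bar u_m$; if instead you express it as the failure of $r_\sigma\to\delta$ at $\bar v_k$, each $\delta\in\Delta$ acquires its own witness, so satisfaction of the full theory no longer forces a \emph{single} point realizing $(\Gamma,\Delta)$ --- defeating precisely the ``one common witness'' requirement you yourself state. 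The second marker escapes this dilemma because the complement of $[V](q^-_{\bar u_m})$ is the downset of $\bar u_m$: under the interpretation $r^-\mapsto q^-_{\bar u_m}$, the clause $\delta\to r^-$ holds at $\bar v_k$ (by monotonicity no $\delta$-point can lie below $\bar u_m$), while $r^-$ genuinely fails at the witness; dually $r^+\mapsto q^+_{\bar u_m}$ takes care of $\Gamma$.

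Apart from this, your route agrees with the paper's and the bookkeeping at the end is sound; one harmless deviation is that you force only the $\mathcal{L}$-types of $\mathcal{M}$ over $\Theta$, whereas the paper forces all types of $[\mathcal{M}]$ over $\Theta_\mathcal{M}$ and only then restricts by Lemma \ref{L:type-realization}.2 --- for the statement as given, your weaker choice suffices. But as written, the proposal defers rather than supplies the one non-routine idea of the proof, and the single-marker design it does commit to fails on either of its natural readings; to repair it you must introduce the paired markers and the three clause shapes above (with $\psi^1, \tau^1$ and $s^\pm$ in the predecessor case).
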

\begin{proof}
	Assume the hypothesis of the Proposition. For the given $\mathcal{M}$, consider $[\mathcal{M}]$; we extend the signature of this model as
	follows:
	\begin{itemize}
		\item For every $\bar{v}_k \in W^{un}_w$, if $\Gamma, \Delta \subseteq
		L(\Theta_{\mathcal{M}})$ are such that
		$(\Gamma, \Delta)$ is a successor $\mathcal{L}$-type of
		$([\mathcal{M}],\bar{v}_k)$, then we add two fresh propositional letters $r^+_{\Gamma,\Delta,\bar{v}_k}$
		and $r^-_{\Gamma,\Delta,\bar{v}_k}$.
		
		\item For every $\bar{v}_k \in W^{un}_w$, if $\Gamma, \Delta \subseteq
		L(\Theta_{\mathcal{M}})$ are such that
		$(\Gamma, \Delta)$ is a predecessor $\mathcal{L}$-type of
		$([\mathcal{M}],\bar{v}_k)$, then we add two fresh propositional letters $s^+_{\Gamma,\Delta,\bar{v}_k}$
		and $s^-_{\Gamma,\Delta,\bar{v}_k}$.
	\end{itemize}
	We will call the resulting signature $\Theta'$. Consider, next, the following set of formulas:
	\begin{align*}
		\Upsilon = \{\phi^1_{\bar{v}_k}&(r^+_{\Gamma,\Delta,\bar{v}_k}, r^-_{\Gamma,\Delta,\bar{v}_k}),  \theta^1_{\bar{v}_k}(r^+_{\Gamma,\Delta,\bar{v}_k}, \gamma), \theta^1_{\bar{v}_k}(\delta, r^-_{\Gamma,\Delta,\bar{v}_k})\mid r^+_{\Gamma,\Delta,\bar{v}_k}, r^-_{\Gamma,\Delta,\bar{v}_k} \in \Theta',\,\gamma \in \Gamma,\,\delta \in \Delta\} \cup\\	
		&\cup\{\psi^1_{\bar{v}_k}(s^+_{\Gamma,\Delta,\bar{v}_k}, s^-_{\Gamma,\Delta,\bar{v}_k}),  \tau^1_{\bar{v}_k}(s^+_{\Gamma,\Delta,\bar{v}_k}, \gamma), \tau^1_{\bar{v}_k}(\delta, s^-_{\Gamma,\Delta,\bar{v}_k})\mid s^+_{\Gamma,\Delta,\bar{v}_k}, s^-_{\Gamma,\Delta,\bar{v}_k} \in \Theta',\,\gamma \in \Gamma,\,\delta \in \Delta\}
	\end{align*}
	We partition $\Upsilon$ into two subsets\footnote{Throughout this proof, we set that $F(\alpha):= F(\chi)$ if $\alpha$ is a substitution case of $\chi$.}, $\Upsilon^+ = \{\eta \in \Upsilon\mid F(\eta) = +\}$ and $\Upsilon^- = \{\eta \in \Upsilon\mid F(\eta) = -\}$, and consider the following $L(\Theta')$-theory $\Xi = (\Xi^+, \Xi^-) = (Th^+_\mathcal{L}([\mathcal{M}], w) \cup \Upsilon^+, Th^-_\mathcal{L}([\mathcal{M}], w) \cup \Upsilon^-)$. 
	
	We show that $\Xi$ is finitely $\mathcal{L}$-satisfiable. Indeed, given an arbitrary $(S_0, T_0) \Subset \Xi$
	we know, wlog, that, for some $n < \omega$ we have:
	$$
	(S_0, T_0)  = (S_1, T_1)  \cup \ldots \cup (S_n, T_n),
	$$
	where for every $1 \leq i \leq n$ one of the following two cases
	holds:
	
	\emph{Case 1}. For some  $\bar{v}_k \in W^{un}_w$, there exist  $\Gamma, \Delta \subseteq
	L(\Theta_{\mathcal{M}})$ such that
	$(\Gamma, \Delta)$ is a successor $\mathcal{L}$-type of
	$([\mathcal{M}],\bar{v}_k)$ and $\Gamma'
	\Subset\Gamma$, $\Delta' \Subset \Delta$ such that we have: $(S_i, T_i) = (S'_i\cup S''_i, T'_i\cup T''_i)$, where $(S''_i, T''_i) \Subset (Th_\mathcal{L}([\mathcal{M}], w)$ and: 
	\begin{align*}
	S'_i\cup T'_i  &\subseteq \{\phi^1_{\bar{v}_k}(r^+_{\Gamma,\Delta,\bar{v}_k}, r^-_{\Gamma,\Delta,\bar{v}_k}),  \theta^1_{\bar{v}_k}(r^+_{\Gamma,\Delta,\bar{v}_k}, \gamma), \theta^1_{\bar{v}_k}(\delta, r^-_{\Gamma,\Delta,\bar{v}_k})\mid \gamma \in \Gamma', \delta \in \Delta'\}
	\end{align*}
	By Definition \ref{D:types}, $(\Gamma', \Delta')$ is
	$\mathcal{L}$-satisfied at $([\mathcal{M}], \bar{u}_m)$
	for some $\bar{u}_m \in W^{un}_w$ such that $\bar{v}_k\mathrel{\prec^{un}_w}\bar{u}_m$. But then it follows from Lemma \ref{L:representation-m}, that $(S_i, T_i)$ must be $\mathcal{L}$-satisfied at $w$ in the expansion $\mathcal{M}'$ of
	$[\mathcal{M}]$ in which $r^+_{\Gamma,\Delta,\bar{v}_k}$ and  $r^-_{\Gamma,\Delta,\bar{v}_k}$ are identified with $q^+_{\bar{u}_m}$, $q^-_{\bar{u}_m}$, respectively.

	\emph{Case 2}. For some  $\bar{v}_k \in W^{un}_w$, there exist  $\Gamma, \Delta \subseteq
L(\Theta_{\mathcal{M}})$ such that
$(\Gamma, \Delta)$ is a predecessor $\mathcal{L}$-type of
$([\mathcal{M}],\bar{v}_k)$ and $\Gamma'
\Subset\Gamma$, $\Delta' \Subset \Delta$ such that we have: $(S_i, T_i) = (S'_i\cup S''_i, T'_i\cup T''_i)$, where $(S''_i, T''_i) \Subset (Th_\mathcal{L}([\mathcal{M}], w)$ and: 
\begin{align*}
	S'_i\cup T'_i  &\subseteq \{\psi^1_{\bar{v}_k}(s^+_{\Gamma,\Delta,\bar{v}_k}, s^-_{\Gamma,\Delta,\bar{v}_k}),  \tau^1_{\bar{v}_k}(s^+_{\Gamma,\Delta,\bar{v}_k}, \gamma), \tau^1_{\bar{v}_k}(\delta, s^-_{\Gamma,\Delta,\bar{v}_k})\mid \gamma \in \Gamma', \delta \in \Delta'\}
\end{align*}
By Definition \ref{D:types}, $(\Gamma', \Delta')$ is
$\mathcal{L}$-satisfied at $([\mathcal{M}], \bar{u}_m)$
for some $\bar{u}_m \in W^{un}_w$ such that $\bar{u}_m\mathrel{\prec^{un}_w}\bar{v}_k$. But then it follows from Lemma \ref{L:representation-m}, that $(S_i, T_i)$ must be $\mathcal{L}$-satisfied at $w$ in the expansion $\mathcal{M}'$ of
$[\mathcal{M}]$ in which $s^+_{\Gamma,\Delta,\bar{v}_k}$ and  $s^-_{\Gamma,\Delta,\bar{v}_k}$ are identified with $q^+_{\bar{u}_m}$, $q^-_{\bar{u}_m}$, respectively.	 
 
	Note, moreover, that for all $1 \leq i < j \leq n$, the set of
	propositional letters that needs to be added to $[\mathcal{M}]$ in
	order to get $T_i$ satisfied at $w$ is disjoint from the set of
	propositional letters to be added to this same model in order to get $T_j$
	satisfied at $w$. Therefore, we can take the union of the
	expansions required by $T_1, \ldots, T_n$ and get (by Expansion property) an expansion
	$\mathcal{M}'$ of $[\mathcal{M}]$ such that $(S_0,
	T_0)$ is satisfied at $(\mathcal{M}', w)$. Since $(S_0,
	T_0) \Subset \Xi$ was chosen arbitrarily, this means, by the
	$\star$-compactness of $\mathcal{L}$, that
	$\Xi$ itself is
	$\mathcal{L}$-satisfiable.

	Let $(\mathcal{M}_1, w_1)$ be a pointed $\Theta'$-model
	$\mathcal{L}$-satisfying
	$\Xi$. We know, by Lemma
	\ref{L:unravelling-asim} and the preservation of $\mathcal{L}$ under bi-asimulations, that $((\mathcal{M}_1)^{un}_{w_1}, w_1)$ also
	$\mathcal{L}$-satisfies $\Xi$. We now set $\mathcal{M}': = (\mathcal{M}_1)^{un}_{w_1}\upharpoonright\Theta_{\mathcal{M}}$. It follows, by Expansion Property, that $\mathcal{M}'$ $\mathcal{L}$-satisfies $Th_\mathcal{L}([\mathcal{M}], w)$.
	Therefore, by Lemma \ref{L:lemma1}, there must be an
	$\mathcal{L}$-elementary embedding $f$ of $[\mathcal{M}]$ into
	$\mathcal{M}'$, and for this elementary embedding we will have
	with $f(w) = w_1$. Note, moreover, that, by Lemma \ref{L:embedding}.5, we have then $f([\mathcal{M}]) \preccurlyeq_\mathcal{L} \mathcal{M}'$ where $f([\mathcal{M}])$ is an $f$-isomorphic copy of $[\mathcal{M}]$. Therefore, by Lemma  \ref{L:embedding}.6, there must exist a $\Theta_{\mathcal{M}}$-model $\mathcal{N}$ and a function $f' \supseteq f$, such that $[\mathcal{M}] \preccurlyeq_\mathcal{L} \mathcal{N}$, and $f': \mathcal{N} \cong \mathcal{M}'$. We now prove the following:
	
	\emph{Claim}. $\mathcal{N}$ realizes every $\mathcal{L}$-type
	of $[\mathcal{M}]$.
	
	To prove this Claim, we again have to consider the two cases
	outlined above.
	
	\emph{Case 1}. For some  $\bar{u}_r \in W^{un}_w$, there exist  $\Gamma, \Delta \subseteq
	L(\Theta_{\mathcal{M}})$ such that
	$(\Gamma, \Delta)$ is a successor $\mathcal{L}$-type of
	$([\mathcal{M}],\bar{u}_r)$. Note that for every formula $\chi$ in the set:
	$$
	\{\phi^1_{\bar{u}_r}(r^+_{\Gamma,\Delta,\bar{u}_r}, r^-_{\Gamma,\Delta,\bar{u}_r}),  \theta^1_{\bar{u}_r}(r^+_{\Gamma,\Delta,\bar{u}_r}, \gamma), \theta^1_{\bar{u}_r}(\delta, r^-_{\Gamma,\Delta,\bar{u}_r})\mid \gamma \in \Gamma,\,\delta \in \Delta\}
	$$
	we have that $(\mathcal{M}_1)^{un}_{w_1}, w_1 \models_\mathcal{L} \chi$ iff $F(\chi) = +$. Therefore, since we also have $w_1 = f(w) = f(u_1)$ (by $\bar{u}_r \in W^{un}_w$ and the definition of bi-unravelling), it follows from Lemma \ref{L:representation-emb} that for some $v \in (W_1)^{un}_{w_1}$ we have both $f(\bar{u}_r)\mathrel{(\prec_1)^{un}_{w_1}}v$ and $(\mathcal{M}_1)^{un}_{w_1}, v\models_\mathcal{L}(\{r^+_{\Gamma,\Delta,\bar{u}_r}\},\{r^-_{\Gamma,\Delta,\bar{u}_r}\})$. On the other hand, it follows from the same  Lemma \ref{L:representation-emb} that for all $\gamma \in \Gamma$ and $\delta \in \Delta$ we have:
	$$
	(\mathcal{M}_1)^{un}_{w_1}, f(\bar{u}_r) \models_\mathcal{L} (r^+_{\Gamma,\Delta,\bar{u}_r}\to \gamma) \wedge (\delta\to r^-_{\Gamma,\Delta,\bar{u}_r}). 
	$$ 
	But then, it follows, by the choice of $v$, that we have:
	$$
	(\mathcal{M}_1)^{un}_{w_1}, v\models_\mathcal{L}(\Gamma,\Delta).
	$$
	Since we have $\mathcal{M}'= (\mathcal{M}_1)^{un}_{w_1}\upharpoonright\Theta_{\mathcal{M}}$, it follows that also $f(\bar{u}_r)\mathrel{\prec'}v$, and, by Expansion Property, that:
	$$
	\mathcal{M}', v\models_\mathcal{L}(\Gamma,\Delta).
	$$
	But then consider $u \in U$ such that $f'(u) = v$. We have then $\bar{u}_r\mathrel{\lhd}u$, and, by Lemma \ref{L:embedding}.4, that 
	$$
	\mathcal{N}, u\models_\mathcal{L}(\Gamma,\Delta),
	$$
	so that $\mathcal{N}$ realizes $(\Gamma,\Delta)$.
	
	\emph{Case 2}. For some  $\bar{u}_r \in W^{un}_w$, there exist  $\Gamma, \Delta \subseteq
	L(\Theta_{\mathcal{M}})$ such that
	$(\Gamma, \Delta)$ is a predecessor $\mathcal{L}$-type of
	$([\mathcal{M}],\bar{u}_r)$. Note that for every formula $\chi$ in the set:
	$$
	\{\psi^1_{\bar{u}_r}(s^+_{\Gamma,\Delta,\bar{u}_r}, s^-_{\Gamma,\Delta,\bar{u}_r}),  \tau^1_{\bar{u}_r}(s^+_{\Gamma,\Delta,\bar{u}_r}, \gamma), \tau^1_{\bar{u}_r}(\delta, s^-_{\Gamma,\Delta,\bar{u}_r})\mid \gamma \in \Gamma,\,\delta \in \Delta\}
	$$
	we have that $(\mathcal{M}_1)^{un}_{w_1}, w_1 \models_\mathcal{L} \chi$ iff $F(\chi) = +$. Therefore, since we also have $w_1 = f(w) = f(u_1)$ (by $\bar{u}_r \in W^{un}_w$ and the definition of bi-unravelling), it follows from Lemma \ref{L:representation-emb} that for some $v \in (W_1)^{un}_{w_1}$ we have both $v\mathrel{(\prec_1)^{un}_{w_1}}f(\bar{u}_r)$ and $(\mathcal{M}_1)^{un}_{w_1}, v\models_\mathcal{L}(\{s^+_{\Gamma,\Delta,\bar{u}_r}\},\{s^-_{\Gamma,\Delta,\bar{u}_r}\})$. On the other hand, it follows from the same  Lemma \ref{L:representation-emb} that for all $\gamma \in \Gamma$ and $\delta \in \Delta$ we have:
	$$
	(\mathcal{M}_1)^{un}_{w_1}, f(\bar{u}_r) \not\models_\mathcal{L} (s^+_{\Gamma,\Delta,\bar{u}_r}\ll \gamma) \vee (\delta\ll s^-_{\Gamma,\Delta,\bar{u}_r}). 
	$$ 
	But then, it follows, by the choice of $v$, that we have:
	$$
	(\mathcal{M}_1)^{un}_{w_1}, v\models_\mathcal{L}(\Gamma,\Delta).
	$$
	Since we have $\mathcal{M}'= (\mathcal{M}_1)^{un}_{w_1}\upharpoonright\Theta_{\mathcal{M}}$, it follows that also $v\mathrel{\prec'}f(\bar{u}_r)$, and, by Expansion Property, that:
	$$
	\mathcal{M}', v\models_\mathcal{L}(\Gamma,\Delta).
	$$
	But then consider $u \in U$ such that $f'(u) = v$. We have then $u\mathrel{\lhd}\bar{u}_r$, and, by Lemma \ref{L:embedding}.4, that 
	$$
	\mathcal{N}, u\models_\mathcal{L}(\Gamma,\Delta),
	$$
	so that $\mathcal{N}$ realizes $(\Gamma,\Delta)$.
	
	Our Claim is thus proven.
	
	It remains to notice that, by Lemma \ref{L:type-realization}.2, $\mathcal{N}':= \mathcal{N}\upharpoonright\Theta$ must realize every $\mathcal{L}$-type of $\mathcal{M} = [\mathcal{M}]\upharpoonright\Theta$, and that, by $f':\mathcal{N} \cong \mathcal{M}'$, $\mathcal{N}'$ is a $\Theta$-reduct of a bi-unravelled model and hence a bi-unravelled $\Theta$-model itself. 
\end{proof}

\begin{corollary}\label{C:saturation}
	Let $\mathcal{L}$ be an abstract bi-intuitionistic logic which extends $\mathsf{BIL}$, is
	preserved under bi-asimulations, $\star$-compact, and has TUP, and let $(\mathcal{M}, w)\in Pmod_\Theta$. Then there exists a $(\mathcal{N}, w) \in Pmod_\Theta$ such that $Th_\mathcal{L}(\mathcal{M}, w) = Th_\mathcal{L}(\mathcal{N}, w)$ and $\mathcal{N}$ is $\mathcal{L}$-saturated.	
\end{corollary}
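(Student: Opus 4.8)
The plan is to build $\mathcal{N}$ as the union of an $\mathcal{L}$-elementary chain of models, each realizing all the types of its predecessor, and then to verify that the union is $\mathcal{L}$-saturated using the Tarski Union Property. The one preliminary move is to replace $(\mathcal{M},w)$ by a bi-unravelled model, since Proposition \ref{L:saturation} applies only to bi-unravelled models. First I would pass to the bi-unravelling $\mathcal{M}^{un}_w$. By the $\mathcal{L}$-analogue of Lemma \ref{L:unravelling-asim}, the relation $B$ there is a bi-asimulation in both directions between $(\mathcal{M},w)$ and $(\mathcal{M}^{un}_w,w)$, so, since $\mathcal{L}$ is preserved under bi-asimulations, we get $Th^+_\mathcal{L}(\mathcal{M},w) = Th^+_\mathcal{L}(\mathcal{M}^{un}_w,w)$, whence $Th_\mathcal{L}(\mathcal{M},w) = Th_\mathcal{L}(\mathcal{M}^{un}_w,w)$ (the negative parts being complements within $L(\Theta)$). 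Thus it suffices to produce an $\mathcal{L}$-saturated model whose $\mathcal{L}$-theory at $w$ agrees with that of $\mathcal{M}^{un}_w$, and I may assume from the outset that $(\mathcal{M},w)$ is bi-unravelled.

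Next I would construct the chain. Put $\mathcal{N}_0 := \mathcal{M}$; given a bi-unravelled $\mathcal{N}_i$ with distinguished point $w$, Proposition \ref{L:saturation} provides a bi-unravelled $\mathcal{N}_{i+1}$ with $\mathcal{N}_i \preccurlyeq_\mathcal{L} \mathcal{N}_{i+1}$, with the same distinguished point $w$, and such that every $\mathcal{L}$-type of $\mathcal{N}_i$ is realized in $\mathcal{N}_{i+1}$; crucially $\mathcal{N}_{i+1}$ is again bi-unravelled, so the step may be iterated. Setting $\mathcal{N} := \bigcup_{i\in\omega}\mathcal{N}_i$, the TUP yields $\mathcal{N}_i \preccurlyeq_\mathcal{L} \mathcal{N}$ for every $i$; in particular $\mathcal{M} = \mathcal{N}_0 \preccurlyeq_\mathcal{L} \mathcal{N}$, so $Th_\mathcal{L}(\mathcal{M},w) = Th_\mathcal{L}(\mathcal{N},w)$, which (chained with the preliminary reduction) secures the first conclusion.

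The hard part will be verifying that $\mathcal{N}$ realizes all of its own $\mathcal{L}$-types. Let $(\Gamma,\Delta)$ be an $\mathcal{L}$-type of $(\mathcal{N},v)$, say a successor type (the predecessor case is symmetric). Since the universe of $\mathcal{N}$ is $\bigcup_i W_i$, there is some $i$ with $v \in W_i$. Using $\mathcal{N}_i \preccurlyeq_\mathcal{L} \mathcal{N}$ together with the $\mathcal{L}$-analogue of Corollary \ref{C:types-formulas} — which recasts being a successor type as the failure of $\bigwedge\Gamma'\to\bigvee\Delta'$ at $v$ for all finite $\Gamma'\Subset\Gamma$ and $\Delta'\Subset\Delta$, a condition preserved because the two models share the $\mathcal{L}$-theory at $v$ — I would conclude that $(\Gamma,\Delta)$ is already a successor $\mathcal{L}$-type of $(\mathcal{N}_i,v)$. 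By the construction it is therefore realized in $\mathcal{N}_{i+1}$, and then Lemma \ref{L:type-realization}.1 applied to the chain $\mathcal{N}_i \preccurlyeq_\mathcal{L} \mathcal{N}_{i+1} \preccurlyeq_\mathcal{L} \mathcal{N}$ shows it is realized in $\mathcal{N}$. As $(\Gamma,\Delta)$ and $v$ were arbitrary, $\mathcal{N}$ is $\mathcal{L}$-saturated.

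The main delicacy to watch is exactly this descent of a type from the union to a finite stage: it hinges on the type being carried by a node $v$ that lives in some $\mathcal{N}_i$, and on the finitary, implication/co-implication characterization of types being transported along $\preccurlyeq_\mathcal{L}$. No cardinality bookkeeping is needed, since the signature $\Theta$ stays fixed and the types are localized at single nodes, so a countable chain already forces saturation; this is the one place where the analysis differs cosmetically from the classical elementary-chain argument, and it is handled entirely by the $\mathcal{L}$-analogues of the type machinery from Sections \ref{S:Prel}--\ref{S:unravel}.
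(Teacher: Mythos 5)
Your proposal is correct and takes essentially the same route as the paper's own proof: reduce to the bi-unravelling via Lemma \ref{L:unravelling-asim} and preservation under bi-asimulations, iterate Proposition \ref{L:saturation} to build an $\mathcal{L}$-elementary chain whose union satisfies $\mathcal{N}_i \preccurlyeq_\mathcal{L} \mathcal{N}$ by TUP, and establish saturation by descending each type at a node $v$ to the first stage containing $v$ via the finitary characterization of Corollary \ref{C:types-formulas}, then lifting its realization back to $\mathcal{N}$ with Lemma \ref{L:type-realization}.1. Your explicit observation that Proposition \ref{L:saturation} outputs a bi-unravelled model, which is what licenses the iteration, is precisely the point the paper uses implicitly.
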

\begin{proof}
	It follows from invariance of $\mathcal{L}$ under bi-asimulations and Lemma \ref{L:unravelling-asim} that $Th_\mathcal{L}(\mathcal{M}, w) = Th_\mathcal{L}(\mathcal{M}^{un}_w, w)$. Next, applying Proposition \ref{L:saturation} $\omega$ times, we form an $\mathcal{L}$-elementary chain of submodels
	$$
	\mathcal{M}^{un}_w = \mathcal{N}_1 \preccurlyeq_\mathcal{L}\ldots \preccurlyeq_\mathcal{L} \mathcal{N}_n \preccurlyeq_\mathcal{L}\ldots
	$$
	such that for every $i > 0$, we have $(\mathcal{N}_i, w) \in Str_\mathcal{L}(\Theta)$ and $\mathcal{N}_{i + 1}$ realizes every $\mathcal{L}$-type of $\mathcal{N}_{i}$. We then set $\mathcal{N} := \bigcup_{i > 0}\mathcal{N}_{i}$ so that $(\mathcal{N}, w) \in Str_\mathcal{L}(\Theta)$. Since $\mathcal{L}$ has TUP, we have then $\mathcal{N}_{i} \preccurlyeq_\mathcal{L} \mathcal{N}$ for every $i > 0$. In particular, we have $\mathcal{M}^{un}_w = \mathcal{N}_1 \preccurlyeq_\mathcal{L} \mathcal{N}$, so that  $Th_\mathcal{L}(\mathcal{M}, w) = Th_\mathcal{L}(\mathcal{N}, w)$ follows. It remains to show the $\mathcal{L}$-saturation of $\mathcal{N}$, that is to say, that $\mathcal{N}$ realizes all of its $\mathcal{L}$-types. Since we have two sorts of types in $\mathcal{L}$, we have to consider two possible cases.
	
	\textit{Case 1}. For some $v \in U$ the sets $\Gamma, \Delta \subseteq
	L(\Theta)$ are such that
	$(\Gamma, \Delta)$ is a successor $\mathcal{L}$-type of
	$(\mathcal{N}, v)$. Then, by Corollary \ref{C:types-formulas}.1, for all $\Gamma'\Subset \Gamma$ and $\Delta' \Subset \Delta$, we have $\bigwedge\Gamma'\to \bigvee\Delta' \in Th_{\mathcal{L}}(\mathcal{N}, v)$. But then, we can choose a $j > 0$ such that $\mathcal{N}_{j}$ is the first model in the chain for which we have $v \in U_j$. Since we also have $\mathcal{N}_{j} \preccurlyeq_\mathcal{L} \mathcal{N}$, it follows that $Th_{\mathcal{L}}(\mathcal{N}_j, v) = Th_{\mathcal{L}}(\mathcal{N}, v)$ and thus, for all $\Gamma'\Subset \Gamma$ and $\Delta' \Subset \Delta$, we have $\bigwedge\Gamma'\to \bigvee\Delta' \in Th_{\mathcal{L}}(\mathcal{N}_j, v)$. But then, by Corollary \ref{C:types-formulas}.1, 	$(\Gamma, \Delta)$ must be a successor $\mathcal{L}$-type of
	$(\mathcal{N}_j, v)$, and, therefore, $(\Gamma, \Delta)$ must be realized in $\mathcal{N}_{j + 1}$. But, since we also have $\mathcal{N}_{j + 1}\preccurlyeq_\mathcal{L} \mathcal{N}$, it follows from Lemma \ref{L:type-realization}.1, that $(\Gamma, \Delta)$ also must be realized in $\mathcal{N}$ itself.
	
	\textit{Case 2}, where we assume that our $\mathcal{L}$-type is a predecessor $\mathcal{L}$-type, is solved in the same manner. The only difference from Case 1 is that we need to apply Corollary \ref{C:types-formulas}.2 in place of Corollary \ref{C:types-formulas}.1. 	 
\end{proof}

We are now in a position to prove Theorem \ref{L:main}. Indeed,
assume the hypothesis of the theorem, and assume, for
contradiction, that $\mathcal{L}\not\equiv\mathcal{L}'$. By
Proposition \ref{L:proposition1}, there must be a $\phi \in
L(\Theta_\phi)$ and
$(\mathcal{M}_1, w_1)$, $(\mathcal{M}_2, w_2) \in Pmod_{\Theta_\phi}$ such that
$Th^+_{BIL}(\mathcal{M}_1, w_1) \subseteq Th^+_{BIL}(\mathcal{M}_2,
w_2)$ while $\mathcal{M}_1, w_1 \models_\mathcal{L} \phi$ and
$\mathcal{M}_2, w_2 \not\models_\mathcal{L} \phi$. By Corollary
\ref{C:saturation}, take $\mathcal{L}$-saturated
$\Theta_\phi$-models $\mathcal{N}_1$ and $\mathcal{N}_2$ such that $Th_\mathcal{L}(\mathcal{M}_i, w_i) =
Th_\mathcal{L}(\mathcal{N}_i, w_i)$ for $i \in \{ 1,2 \}$. We
will have then, of course, that $\mathcal{N}_1, w_1
\models_\mathcal{L} \phi$, but $\mathcal{N}_2, w_2
\not\models_\mathcal{L} \phi$. On the other hand, we will still
have
\[
Th^+_{\mathcal{L}'}(\mathcal{N}_1, w_1) \subseteq
Th^+_{\mathcal{L}'}(\mathcal{N}_2, w_2),
\]
whence, by Corollary
\ref{L:asimulationscorollary}, there must be a bi-asimulation $A$
from $(\mathcal{N}_1, w_1)$ to $(\mathcal{N}_2, w_2)$, but then,
since $\mathcal{L}$ is preserved under bi-asimulations, we must also
have \[
Th^+_\mathcal{L}(\mathcal{N}_1, w_1) \subseteq
Th^+_\mathcal{L}(\mathcal{N}_2, w_2).
\]
Now, since $\phi \in
Th^+_\mathcal{L}(\mathcal{N}_1, w_1)$, we can see that we must also have $\phi \in
Th^+_\mathcal{L}(\mathcal{N}_2, w_2)$, so that $\mathcal{N}_2, w_2
\models_\mathcal{L} \phi$, which is a contradiction.

\section{Conclusion}
In this paper  we have extended the methods of \cite{baok} to the case of bi-intuitionistic logic, showing bi-intuitionistic propostional logic to be the strongest abstract logic satisfying the Tarski Union Property, a variation of compactness and preservation under bi-asimulations. This extension is rather non-trivial given the complexity of the unravelling construction in the bi-intuitionistic context and the added complexity in building the desired saturated structures required for the proof to go through. It would be interesting to find other Lindstr\"om characterizations of bi-intuitionistic propositional logic (perhaps finding some natural property that can replace the Tarski Union Property) but we will leave this task for another occasion, as we do not know how to proceed.

%

}
\end{document}